\documentclass[12pt]{amsart}

\usepackage{amsmath}
\usepackage{amssymb}
\usepackage{amsthm}
\usepackage[mathscr]{eucal}
\usepackage{a4wide,fullpage}
\usepackage[T1]{fontenc}
\usepackage{enumerate}
\usepackage[foot]{amsaddr}
\usepackage{thmtools}
\usepackage[pagebackref]{hyperref}
\usepackage{cleveref}

\newtheorem{theorem}{Theorem}[section]
\newtheorem{proposition}[theorem]{Proposition}
\newtheorem{lemma}[theorem]{Lemma}
\newtheorem{corollary}[theorem]{Corollary}

\newtheorem{remark}[theorem]{Remark}
\theoremstyle{definition}
\newtheorem{definition}[theorem]{Definition}

\numberwithin{equation}{section}

\newcommand{\C}{\mathbb{C}}
\newcommand{\R}{\mathbb{R}}
\newcommand{\Q}{\mathbb{Q}}
\newcommand{\Z}{\mathbb{Z}}

\newcommand{\F}{\mathbb{F}}
\renewcommand{\H}{\mathbb{H}}
\newcommand{\calA}{\mathcal{A}}
\newcommand{\calD}{\mathcal{D}}

\newcommand{\calF}{\mathcal{F}}

\newcommand{\calH}{\mathcal{H}}
\newcommand{\calM}{\mathcal{M}}
\newcommand{\calO}{\mathcal{O}}

\newcommand{\calQ}{\mathcal{Q}}
\newcommand{\calS}{\mathcal{S}}

\newcommand{\Leg}{\mathrm{Leg}}
\newcommand{\ord}{\mathrm{ord}}
\newcommand{\leg}[2]{\left( \frac{#1}{#2} \right)}

\newcommand{\kzxz}[4]{\left(\begin{smallmatrix} #1 & #2 \\ #3 & #4\end{smallmatrix}\right) }
\newcommand{\kabcd}{\kzxz{a}{b}{c}{d}}
\newcommand{\dif}{\mathrm{d}}
\newcommand{\ST}{\mathrm{ST}}
\newcommand{\pihol}{\pi_{\mathrm{hol}}}

\let\Im\relax
\DeclareMathOperator{\Im}{Im}
\DeclareMathOperator{\Sl}{SL}

\DeclareMathOperator{\Aut}{Aut}
\DeclareMathOperator{\End}{End}
\DeclareMathOperator{\lcm}{lcm}
\DeclareMathOperator{\tr}{tr}
\DeclareMathOperator{\Fr}{Fr}
\DeclareMathOperator{\rank}{rank}
\DeclareMathOperator{\Tr}{Tr}

\title{Negative bias in moments of the Legendre family of elliptic curves}
\author{Ben Kane}
\email{bkane@hku.hk}
\author{Mikul\'{a}\v{s} Zindulka}
\email{zinmik2@gmail.com}
\address{Department of Mathematics, University of Hong Kong, Pokfulam, Hong Kong}
\keywords{Family of elliptic curves, negative bias conjecture, trace of Frobenius, holomorphic projection, Hurwitz class number}
\subjclass{11E41, 11F27, 11F37, 11G05}
\date{\today}

\begin{document}

\begin{abstract}
We determine the bias in higher moments of the Legendre family of elliptic curves in view of the Negative Bias Conjecture of S. J. Miller. We show that every lower order term is either zero or negative on average for both even and odd moments, and explicitly compute the third and fourth moment. The results about Hurwitz class numbers in arithmetic progressions which we prove in the process may be useful for other applications.
\end{abstract}

\maketitle

\section{Introduction and statement of results}
\label{sec:intro}

There are various important statistics connected with the distribution of traces of Frobenius of elliptic curves. The \emph{Sato-Tate conjecture}, proved in \cite{BGHT, CHT}, predicts that for a fixed elliptic curve $ E/\Q $ without CM, the normalized traces of the reductions of $ E $ over $ \F_p $ are equidistributed with respect to the Sato-Tate measure as $ p \to \infty $. Inspired by the conjecture, Birch \cite{Bi} proved that when we simultaneously consider all (isomorphism classes of) elliptic curves over $ \F_p $, their normalized traces are also equidistributed with respect to the Sato-Tate measure.

Schoof \cite{Sch} resolved the closely related question how many elliptic curves $ E/\F_{p^r} $ have trace equal to some fixed $ t \in \Z $. It is in some sense more natural to consider the weighted count
\[
	N(p^r;t) := \sum_{\substack{E/\F_{p^r}\\\tr(E)=t}}\frac{1}{\left|\Aut_{\F_{p^r}}(E)\right|}.
\]
An elliptic curve $ E/\F_{p^r} $ is called \emph{supersingular} if $ p \mid \tr(E) $, and \emph{ordinary} otherwise. In the case when $ p \nmid t$, we have
\[
	2N(p^r;t) = H(4p^r-t^2),
\]
where $ H(4p^r-t^2) $ is the \emph{Hurwitz class number}.

One possible refinement is to determine the distribution for elliptic curves with trace $ \tr(E) \equiv m \pmod{M} $ for some $ m \in \Z $ and $ M \in \Z_{\geq 1} $. Bringmann, Pujahari, and the first author \cite[Theorem 1.1]{BKP} proved that also in this case the traces are equidistributed with respect to the Sato-Tate measure. Another possible refinement is to look at \emph{families of elliptic curves}. By a family $ \calF $, we mean a collection of elliptic curves
\[
	E_\lambda: y^2+a_1(\lambda)xy+a_3(\lambda)y = x^3+a_2(\lambda)x^2+a_4(\lambda)x+a_6(\lambda),
\]
where $ a_i(\lambda) \in \Z[\lambda] $. Let us assume that the discriminant $ \Delta(E_\lambda) $ is not the zero polynomial. If $ E_\lambda $ for some $ \lambda \in \Z $ is an elliptic curve over $ \Q $, then we can consider it as an elliptic curve over $ \F_p $ for all primes $ p $ of good reduction. More generally, we can let $ \lambda $ run over $ \F_{p^r} $ and view $ E_\lambda $ as an elliptic curve over the finite field (for some $ \lambda \in \F_{p^r} $, $ E_\lambda $ may be singular, but we still define its trace as $ \tr(E_\lambda) := p^r+1-\left|E(p^r)\right| $). The basic questions are again how many elliptic curves in the family have a fixed trace $ t $ and what is the distribution of traces in the family as $ p \to \infty $.

Families of elliptic curves were considered from a different point of view by S. J. Miller and his collaborators \cite{As,Mil1,Mil2}. Their motivation came from Random Matrix Theory (RMT), namely the Katz-Sarnak philosophy \cite{KS1,KS2}. In this context, a family is a collection of geometric objects sharing some properties together with their $ L $-functions. Vaguely speaking, Katz and Sarnak propose that zeros of the $ L $-functions attached to the objects in the family should behave like eigenvalues of random matrices from a particular group. Miller studied the so-called \emph{$ 1 $-} and \emph{$ 2 $-level densities} of zeros of $ L $-functions for various families in \cite{Mil1,Mil2} and observed that they match the predictions of Katz and Sarnak.

The \emph{$ k $-th moment} of a family $ \calF $ is defined by
\[
	S_k^{\calF}(p^r) := \sum_{\lambda \in \F_{p^r}}\tr(E_\lambda)^k
\]
These moments were considered mostly over $ \F_p $. For the first moment, Nagao \cite{Na} conjectured that
\[
	\lim_{X \to \infty}\frac{1}{X}\sum_{p \leq X}-S_1^{\calF}(p)\frac{\log p}{p} = \rank \calF(\Q(\lambda)).
\]
The formula was proved by Rosen and Silverman \cite{RS} in the case when $ \calF $ is a \emph{rational surface}.

Michel \cite{Mich} proved that if the $ j $-invariant $ j(E_\lambda) $ is non-constant, then the second moment satisfies
\[
	S_2^{\calF}(p) = p^2+O\left(p^{3/2}\right).
\]
The lower-order terms in the expansion of the second moment were studied by Miller \cite{Mil1,Mil2}, who observed that in all the examples considered, the first term after the leading term which does not average to zero has negative average. The \emph{Negative Bias Conjecture} states that this is true for every family with non-constant $ j(E_\lambda) $. Further examples supporting the conjecture can be found in \cite{As}.

Kazalicki and Naskr\k{e}cki \cite{KN} considered families
\[
	E_\lambda: y^2 = P(x)+\lambda Q(x),
\]
where $ P(x), Q(x) \in \Z[x] $ are polynomials of degrees $ \deg(P), \deg(Q) \leq 3 $. A family of this form is called a \emph{pencil of cubics}. They proved the Negative Bias Conjecture in this setting under additional constraints \cite[Theorem 1.4]{KN}, and gave a precise formulation of the conjecture. Namely, they pointed out that the $ k $-th moment is \emph{``motivic''} \cite[p. 5]{KN}. A consequence of this fact is that it can be decomposed as \cite[(1.1)]{KN}
\[
	S_k^{\calF}(p^r) = \sum_{j \in \frac{1}{2}\Z}\sum_{i}(\alpha_{k,j,i}(p))^r
\]
for some algebraic numbers $ \alpha_{k,j,i}(p) $ such that $ |\alpha_{k,j,i}(p)| = p^j $. We let
\[
	S_{k,j}^{\calF}(p^r) := \frac{1}{p^{rj}}\sum_{i}(\alpha_{k,j,i}(p))^r
\]
and define the averages
\[
	\calA_{k,r,j}^{\calF} := \lim_{x \to \infty}\frac{1}{\pi(x)}\sum_{p \leq x}S_{k,j}^{\calF}(p^r).
\]
The leading term in the second moment is of size $ p^{2r} $. The Negative Bias Conjecture can now be reformulated more precisely as follows: if $ j_0 $ is the largest $ j < 2 $ such that $ \calA_{2,r,j}^{\calF} \neq 0 $, then $ \calA_{2,r,j_0}^{\calF} $ is negative.

Recently, Pujahari, Yang, and the first author studied the bias in the second moment for elliptic curves with a trace $ \tr(E) \equiv m \pmod{M} $ and proved the surprising result that it can be both positive and negative depending on the congruence class \cite[Theorems 1.1 and 1.2]{KPY}.

One example where the distribution of traces is well understood is the \emph{Legendre family} $ \Leg $ given by
\[
	E_{\lambda}^{\Leg}: y^2 = x(x-1)(x-\lambda).
\]
The first two moments were computed by Miller \cite[Section 12.1]{Mil1}:
\begin{align*}
	S_1^{\Leg}(p^r)& = 0,\\
	S_2^{\Leg}(p^r)& = p^{2r}-2p^r-1.
\end{align*}
To be precise, Miller considered them only over $ \F_p $ but extending the result over $ \F_{p^r} $ does not present a difficulty. Ono, Saad, and Saikia investigated the family in connection with the \emph{${_2F_1}$-Gaussian hypergeometric functions} and found an expression for $ S_k^{\Leg}(p^r) $ in terms of the Hurwitz class numbers \cite[Proposition 2.11]{OSS} (the proposition is formulated in terms of moments of $ {_2F_1} $ but the two formulations are equivalent). In \cite[Theorem 1.1]{OSS}, they obtained the main term of the asymptotics for $ S_k^{\Leg}(p^r) $, and as an immediate consequence \cite[Corollary 1.2]{OSS} showed that the normalized traces in the Legendre family have the Sato-Tate distribution. In fact, we prove a slightly more precise version of \cite[Proposition 2.11]{OSS} as \Cref{prop:OSS}, the difference being that the contribution from supersingular curves is evaluated explicitly.

In this paper, we study the bias in the higher moments of the Legendre family. Our main result is an exact formula for the moments depending on the coefficients of certain cusp forms. It is stated in terms of the numbers defined for $ 0 \leq \mu \leq \left\lfloor\frac{k}{2}\right\rfloor $ by
\begin{equation}
\label{eq:T}
	T(k,\mu) := \frac{k-2\mu+1}{k-\mu+1}\binom{k}{\mu},
\end{equation}
which were introduced in \cite[Theorem 2.3]{KPY}. In particular,
\[
	C_k = T(2k,k) = \frac{1}{k+1}\binom{2k}{k}
\]
is the $ k $-th \emph{Catalan number}.

\begin{theorem}
\label{thm:Main}
Let $ p \geq 5 $ be a prime, $ r \in \Z_{\geq 1} $, and $ k \in \Z_{\geq 1} $. If $ k $ is even, then
\[
	S_k^{\Leg}(p^r) = C_{\frac{k}{2}}\left(p^{r(\frac{k}{2}+1)}-2p^{\frac{rk}{2}}\right)-3\sum_{\mu=1}^{\frac{k}{2}-1}T(k,\mu)p^{r\mu}-1+E_k(p^r)
\]
and if $ k $ is odd, then
\[
	S_k^{\Leg}(p^r) = \begin{cases}
	-2\sum_{\mu=1}^{(k-1)/2} T(k,\mu)p^{r\mu}+E_k(p^r) &\text{if $ p^r \equiv 1 \pmod{4} $,}\\
	0&\text{if $ p^r \equiv 3 \pmod{4} $,}
	\end{cases}
\]
where $ E_k(p^r) $ is a certain explicit expression depending on the coefficients of cusp forms.
\end{theorem}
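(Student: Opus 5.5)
Writing $q=p^r$, the plan is to start from the class number expression for $S_k^{\Leg}(q)$ given by \Cref{prop:OSS}. Since $E_\lambda^{\Leg}$ has full rational $2$-torsion, $q+1-\tr(E_\lambda)\equiv 0 \pmod 4$. Counting Legendre parameters $\lambda$ with a given trace $t$ then amounts to counting curves with trace $t\equiv q+1\pmod 4$ and full $2$-torsion, weighted by the number of $\lambda$ per isomorphism class. This gives a sum of the shape
\[
S_k^{\Leg}(q)=\sum_{\substack{t\equiv q+1\ (\mathrm{mod}\ 4)\\ t^2<4q,\ p\nmid t}} c\, t^k\, H\!\left(\frac{4q-t^2}{4}\right)+(\text{supersingular terms})+(\text{singular }\lambda=0,1),
\]
for an explicit constant $c$, up to the precise normalisation in \Cref{prop:OSS}. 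The singular fibres $\lambda=0,1$ have $y^2=x^2(x-1)$ and $y^2=x(x-1)^2$ with traces $\pm1$ or so. They contribute a bounded constant, which should produce the $-1$ in the even case. The supersingular contribution is evaluated explicitly as in \Cref{prop:OSS}; there $t\in\{0\}$ (or $\pm\sqrt{q}$-type values when $r$ is even), and for $k$ even this should yield the $-2C_{k/2}q^{k/2}$-type correction. For $k$ odd with $q\equiv 3\pmod 4$, the symmetry $\lambda\mapsto 1-\lambda$ combined with the quadratic twist by $-1$ sends $\tr\mapsto -\tr$. It is a nontrivial twist because $-1$ is a nonsquare, so odd moments vanish identically, which handles that case immediately.

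The heart of the proof is to evaluate the weighted Hurwitz class number moments $\sum_{t\equiv m\ (4)} t^{k} H(4q-t^2)$, or their variants with $(4q-t^2)/4$. I would do this via holomorphic projection. The generating function $\mathcal{H}(\tau)=\sum H(n)q^n$ is Zagier's weight $3/2$ mock modular form, and $\theta(\tau)=\sum q^{t^2}$ is the theta function. Restricting to $t$ in residue classes mod $4$ gives theta functions on $\Gamma_1(16)$. Form the Rankin--Cohen brackets $[\mathcal{H}(\tau)\,\theta_{m,4}(\tau)]_{\nu}$, and apply holomorphic projection to obtain holomorphic quasimodular forms of weight $2+2\nu$ on $\Gamma_0(16)$ or so. Their $q$-th coefficients are $\sum_t P_\nu(t,q)H(4q-t^2)$ plus the explicit correction coming from the nonholomorphic part, namely sums over $t$ of $\min$-type terms $\sum \min(|a|,|b|)^{\ldots}$ over factorizations $ab=q$. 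Each such form decomposes as Eisenstein series plus cusp forms. The Eisenstein coefficients at $n=q$ give polynomials in $q$, while the cusp form coefficients constitute $E_k(q)$. By inverting the triangular relation between $t^k$ and the Gegenbauer-type polynomials $P_\nu$, one writes $t^k=\sum_\mu T(k,\mu)q^{\mu}P_{k/2-\mu}(t,q)$; this is exactly where the numbers $T(k,\mu)$ of \cite{KPY} arise.

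Finally, I would assemble the pieces. For the level-$1$-like contribution $\mu=k/2$ (weight $2$), the Eisenstein part gives the leading $C_{k/2}q^{k/2+1}$. For $1\le\mu<k/2$, the terms in weight $>2$ contribute only through the correction terms from holomorphic projection. For a prime power $q$, the divisors are $p^j$, so these sums collapse to multiples of $q^\mu$ and produce the coefficients $-3T(k,\mu)$, respectively $-2T(k,\mu)$ for odd $k$. The difference between $3$ and $2$ comes from which residue classes mod $4$ contain the divisor pairs. I expect the main obstacle to be computing exactly the Eisenstein components at the cusps of $\Gamma_0(16)$ and the holomorphic-projection boundary terms for the restricted thetas. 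In particular, one must verify that all Eisenstein contributions at higher weight cancel at $n=q$, leaving just the displayed polynomial. I would attack this by computing the constant terms of $\mathcal{H}\,\theta_{m,4}$ at each cusp and matching the result against the known cases $k=1,2$ as a check.
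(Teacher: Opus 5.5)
Your overall route is the paper's: start from \Cref{prop:OSS}, holomorphically project Rankin--Cohen brackets of $\widehat{\calH}$ with sieved theta functions, invert Cohen's formula to get the $T(k,\mu)$, and evaluate the non-holomorphic corrections at $n=p^r$. Your argument for odd $k$ with $p^r\equiv 3\pmod{4}$ is correct and more elementary than reading it off \Cref{prop:OSS}: $\lambda\mapsto 1-\lambda$ turns $E_\lambda$ into its twist by the nonsquare $-1$, so traces change sign.

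There is, however, a necessary step missing. The sums in \Cref{prop:OSS} run over $p\nmid t$, while holomorphic projection only controls the complete moments $\sum_{t\equiv m\pmod{M}}H(\frac{n-t^2}{b})t^k$. The terms $t=pt'$ must be removed, as in \Cref{lem:Hp}. By \Cref{lem:delta1} with $D=\frac{t'^2-p^{r-2}}{b}$, for $0<|t'|<p^{(r-2)/2}$ one has $\leg{D_p}{p}=1$, hence $H(p^2N)=pH(N)$. These terms therefore become $p^{k+1}H_{k,m\bar p,M,b}(p^{r-2})$, except for $t=\pm p^{r/2}$ ($r$ even), where $H(0)=-\frac1{12}$ does not scale by $p$.

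Only this difference makes the non-holomorphic corrections collapse, via $\lambda_{\ell,m,2,1}(p^r)-p^{\ell+1}\lambda_{\ell,m,2,1}(p^{r-2})=2$ (\Cref{lem:lambda}). On its own, $\lambda_{\ell,m,2,1}(p^r)=2\bigl(1+p^{\ell+1}+p^{2(\ell+1)}+\cdots\bigr)$, with the top term halved for $r$ even. So your claim that the divisor sums at a prime power collapse to multiples of $q^\mu$ fails for $r\ge 2$. For example, for $k=r=2$, inserting the unsieved moment into \Cref{prop:OSS} gives $p^4-4p^2-1$ instead of $S_2^{\Leg}(p^2)=p^4-2p^2-1$.

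Your attribution of the lower terms is also wrong. The supersingular contribution $\delta_{2\mid r}2^{k-1}p^{rk/2}(p-1)$ cancels exactly against the $t=\pm p^{r/2}$ terms. The term $-2C_{k/2}q^{k/2}$, which is present also for odd $r$ where supersingular curves have trace $0$, comes from \Cref{lem:H0}, namely $H_{0,m,2,1}(p^r)-pH_{0,m,2,1}(p^{r-2})=\frac13(p^r-2)$. The singular fibres give $+2$, and the constant is $-1=2-3T(k,0)$.

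For odd $k$ you have not set up the right objects. Brackets with the weight-$\frac12$ functions $\theta_{m,4}$ only produce even powers $t^{2j}$, and since $\theta_{1,4}=\theta_{3,4}$ they cannot detect $\sum_{t\equiv m\pmod 4}t^kH(\cdot)$ for odd $k$. The forms you describe have even weight $2+2\nu$, whereas the odd moments live in odd weight $k+2$ with character $\chi_{-4}$ (cf.\ $g_1,g_2,g_3$). You need the weight-$\frac32$ functions $\theta_{1,m,4}=\sum_{t\equiv m\pmod 4}t\,q^{t^2}$ and a separate computation of $\pihol\bigl([F^-,\theta_{1,\chi}\vert V_a]_\nu\bigr)$ (\Cref{lem:alpha}, \Cref{lem:bn}, \Cref{prop:hol2}).

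Moreover, \Cref{prop:OSS}(3) also contains $H\bigl(\frac{4p^r-t^2}{16}\bigr)$, which requires brackets with $\widehat{\calH}\vert V_4$, whose shadow is a multiple of $\theta_0\vert V_4$. The coefficient $-2T(k,\mu)$ is $-T(k,\mu)$ from $b=1$ plus $-T(k,\mu)$ from $b=4$, using $\lambda_{\ell,m,4,b}(p^r)-p^{\ell+1}\lambda_{\ell,m\bar p,4,b}(p^{r-2})=\leg{-1}{m}\leg{2}{p^r}=1$. It does not come from counting residue classes of divisor pairs. Its $\mu=0$ term cancels the $+2$ from the singular fibres, which is why the sum starts at $\mu=1$.

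Finally, the obstacle you single out does not arise. For $k\ge 1$ the projection is already a cusp form (\Cref{prop:holH}), so there are no higher-weight Eisenstein components to compute. The only Eisenstein input is $\frac13\sigma(n)$ in weight $2$, obtained from the Sturm bound.
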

\begin{proof}
For $ k $ even this follows from \Cref{thm:Seven}. For $ k $ odd this follows from \Cref{thm:Sodd} if $ p^r \equiv 1 \pmod{4} $ and from \Cref{prop:OSS} if $ p^r \equiv 3 \pmod{4} $.
\end{proof}

To calculate the averages $ \calA_{k,r,j}^{\Leg} $, we make use of an observation previously made in \cite{KPY}. Suppose that $ f $ is a newform of weight $ \kappa $ on a congruence subgroup $ \Gamma $. By a conjecture of Weil proved by Deligne \cite{Del}, the Fourier coefficients of $ f $ satisfy
\[
	c_f(p) = \beta_p+\gamma_p,
\]
where $ |\beta_p| = |\gamma_p| = p^{\frac{\kappa-1}{2}} $ are the complex roots of the Hecke polynomial. Similarly, the coefficients $ c_f(p^r) $ can be expressed as homogeneous polynomials of degree $ r $ in $ \beta_p $ and $ \gamma_p $. For the even moments, the coefficients come from newforms of even weight, hence they contribute to terms of size $ p^{rj} $ for $ j \in \Z+\frac{1}{2} $. For the odd moments, they come from newforms of odd weight, and hence contribute to terms of size $ p^{rj} $ for $ j \in \Z $. However, the contribution of these terms to the averages $ \calA_{k,r,j}^{\Leg} $ is always zero because the normalized coefficients are equidistributed (by the Sato-Tate conjecture for Fourier coefficients of cusp forms \cite{CHT}). The terms coming from $ E_k(p^r) $ can therefore be completely disregarded, and we have the following (a more precise version including the values of the averages $ \calA_{k,r,j}^{\Leg} $ will be given as \Cref{cor:Bias'}).

\begin{corollary}
\label{cor:Bias}
Let $ r \in \Z_{\geq 1} $ and $ k \in \Z_{\geq 1} $. All lower order terms in the asymptotic expansion of the $ k $-th moment $ S_k^{\Leg}(p^r) $, where $ p $ is a prime, are either negative or zero on average.
\end{corollary}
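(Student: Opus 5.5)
The plan is to read the corollary off \Cref{thm:Main}, using the motivic decomposition $S_k^{\Leg}(p^r)=\sum_j p^{rj}S_{k,j}^{\Leg}(p^r)$ and computing each average $\calA_{k,r,j}^{\Leg}$ directly. The only contributions whose averages are not immediately visible come from $E_k(p^r)$. So the first step is to isolate these. By the explicit form of $E_k(p^r)$ from \Cref{thm:Seven} and \Cref{thm:Sodd}, it is a finite linear combination, with coefficients independent of $p$, of expressions $c_f(p^r)p^{rs}$. Here $f$ runs over newforms (or cusp forms decomposed into newforms) of weight $\kappa$, and $s$ is an integer. Writing $c_f(p^r)$ as a homogeneous polynomial of degree $r$ in $\beta_p,\gamma_p$, each term is a sum of monomials $\beta_p^a\gamma_p^{r-a}p^{rs}$. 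Each such monomial has absolute value $p^{r(\frac{\kappa-1}{2}+s)}$, and these are exactly the $\alpha$'s contributing to the $S_{k,j}$.

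The second step is to show that each such contribution has average zero. After normalization, the contribution to $S_{k,j}^{\Leg}(p^r)$ is $c_f(p^r)/p^{r(\kappa-1)/2}=U_r(\cos\theta_p)$, a Chebyshev polynomial of the second kind, possibly twisted by a character when $f$ has nebentypus. For non-CM $f$, the Sato--Tate theorem \cite{CHT} (in its version for forms with nebentypus, as invoked in the introduction) gives $\frac{1}{\pi(x)}\sum_{p\le x}U_r(\cos\theta_p)\to\int U_r\,d\mu_{ST}=0$ for $r\ge1$. For CM forms, I would use Hecke's equidistribution of angles together with the vanishing of $c_f(p)$ on inert primes, which make up half the primes; the average is again $0$. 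A little care is needed here, and I expect it to be the main obstacle. First, odd weight forms carry a nontrivial character, so $\beta_p\gamma_p=\chi(p)p^{\kappa-1}$ and the relevant equidistribution statement is the twisted one. Second, a given monomial size $p^{rj}$ could in principle coincide with one of the explicit terms $p^{r\mu}$. Even then, the explicit term's average is unaffected, since the cusp form part averages to $0$.

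The third step reads off the remaining explicit terms. For $k$ even, every nonleading term has a coefficient that is negative for all $p$: $-2C_{k/2}p^{rk/2}$, $-3T(k,\mu)p^{r\mu}$ with $T(k,\mu)>0$ for $1\le\mu\le k/2-1$, and $-1$. Hence $\calA_{k,r,\mu}^{\Leg}<0$ for integral $\mu<k/2+1$ where such a term is present, and $0$ otherwise. Half-integral $j$ get contributions only from $E_k$, so their averages are $0$. For $k$ odd, the term $-2T(k,\mu)p^{r\mu}$ occurs only when $p^r\equiv1\pmod4$ and is $0$ otherwise. If $r$ is even, this holds for every $p\ge5$. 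If $r$ is odd, it holds for exactly the primes $p\equiv1\pmod 4$, which have density $\frac12$ by Dirichlet's theorem. Either way $\calA_{k,r,\mu}^{\Leg}=-2T(k,\mu)\delta$ with $\delta\in\{\frac12,1\}$, which is negative. The cusp part again averages to zero, and the finitely many primes $p<5$ do not affect the limits. Together these give that every lower order average is negative or zero, and they also yield the explicit values promised in \Cref{cor:Bias'}.
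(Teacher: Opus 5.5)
Your outline is the paper's own proof of \Cref{cor:Bias'}: isolate $E_k$ via \Cref{thm:Seven,thm:Sodd}, discard it by Sato--Tate, and read off the explicit terms. However, your second step is false when $r$ is even, and the statement cannot be proved in the stated generality.

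The first problem is that $E_k(p^r)$ is not a combination of terms $c_f(p^r)p^{rs}$. Each block in \Cref{thm:Seven,thm:Sodd} has the form $\alpha(p^r)-p^{\kappa-1}\alpha(p^{r-2})$ with $\kappa=k-2\mu+2$. On a newform component with nebentypus $\chi$ this equals $c_f(p^r)-\chi(p)p^{\kappa-1}c_f(p^{r-2})=\beta_p^r+\gamma_p^r$. For $\chi(p)=1$ its normalization is $U_r(\cos\theta_p)-U_{r-2}(\cos\theta_p)=2\cos(r\theta_p)$. With $U_{-1}:=0$, one has $\int_{-1}^{1}(U_r-U_{r-2})\,d\mu_{\ST}=-\delta_{r=2}$ for $r\ge 1$. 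So the piece you dropped is the constant $U_0=1$ when $r=2$.

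Concretely, \Cref{prop:S4} together with $c_g(p^2)=c_g(p)^2-p^5$ gives $E_4(p^2)=2p^5-\frac12\bigl(c_{g_4}(p)^2+c_{g_5}(p)^2\bigr)$. Rankin--Selberg (or Sato--Tate, since $g_4,g_5$ are non-CM) gives $\frac{1}{\pi(x)}\sum_{p\le x}c_{g_i}(p)^2p^{-5}\to 1$. Hence $\calA_{4,2,5/2}^{\Leg}=2-1=1>0$. So for $(k,r)=(4,2)$ the first lower-order term is positive on average.

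Your CM argument fails for the same reason. The vanishing $c_f(p)=0$ at an inert prime forces $\gamma_p=-\beta_p$, so $\beta_p^r+\gamma_p^r=2\beta_p^r\neq 0$ for even $r$. For instance, $g_2$ is CM by $\Q(i)$ with nebentypus $\chi_{-4}$. For $p\equiv 3\pmod 4$ this gives $\beta_p=p^2=-\gamma_p$, and \Cref{prop:S3} then gives $E_3(p^r)=-2p^{2r}$ for every even $r$. The split primes contribute $0$ by Hecke equidistribution, so $\calA_{3,r,2}^{\Leg}=-1$. This is negative, but it contradicts the vanishing at $j=\frac{k+1}{2}$ asserted in your step 2 and in \Cref{cor:Bias'}.

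What survives is the case of odd $r$. There $2\cos(r\theta)$ has Sato--Tate mean zero, and the inert primes of CM components give $\beta_p^r+\gamma_p^r=0$. So your argument does prove the corollary for odd $r$. For even $r$, the cusp-form averages must actually be computed, and they can be nonzero of either sign. This defect is not specific to your write-up: the paper's proof of \Cref{cor:Bias'} makes exactly the same Sato--Tate claim about $E_k$.
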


When $ k $ is fixed, we can evaluate $ E_k(p^r) $ explicitly. We do this for the third and fourth moment in \Cref{prop:S3,prop:S4}. In contrast, the elementary approach does not suffice for the computation of higher moments.

The rest of the paper is organized as follows. In \Cref{sec:prelim}, we collect the preliminaries. In \Cref{sec:tr}, we find a formula for the number of curves in the Legendre family with a fixed trace $ t $ and use it to express the moments in terms of the Hurwitz class numbers (\Cref{prop:OSS}). In \Cref{sec:hol}, we apply holomorphic projection to the moments of Hurwitz class numbers, and we find formulas for these moments in \Cref{sec:hur}. Finally, we use them to express the moments in the Legendre family and prove the main theorem in \Cref{sec:S}, where we also compute $ S_k^{\Leg}(p^r) $ for $ k \in \{3,4\} $.

\section*{Acknowledgments}
The research of the first author was supported by grants from the Research Grants Council of the Hong Kong SAR, China (project numbers HKU 17314122, HKU 17305923). This project was partially carried out while the second author was a PhD student at Charles University, Faculty of Mathematics and Physics, where he was supported by Charles University project PRIMUS/24/SCI/010.

\section{Preliminaries}
\label{sec:prelim}

\subsection{Holomorphic and non-holomorphic modular forms}

In this section, we give an overview of the theory of modular forms and other modular objects. More information can be found in \cite{Ko,On1}.

We use the usual notation $ \tau = u+iv \in \H $ for an element of the upper half-plane, and $ q := e^{2\pi i\tau} $. For $ d \in \Z $ odd, let
\[
	\varepsilon_d := \begin{cases}
		1&\text{if $ d \equiv 1 \pmod{4} $,}\\
		i&\text{if $ d \equiv 3 \pmod{4} $.}
	\end{cases}
\]
Let $ F: \H \to \C $ be a function on the upper half-plane. For $ \kappa \in \frac{1}{2}\Z $, the \emph{weight $ \kappa $ slash operator} is defined as follows. If $ \kappa \in \Z $, then
\[
	(F\vert_\kappa \gamma)(\tau) := (c\tau+d)^{-\kappa}F(\gamma\tau)
\]
for $ \gamma = \kabcd \in \Sl_2(\Z) $, and if $ \kappa \in \frac{1}{2}+\Z $, then
\[
	(F\vert_\kappa \gamma)(\tau) := \leg{c}{d}^{2\kappa}\varepsilon_d^{2\kappa}(c\tau+d)^{-\kappa}F(\gamma\tau)
\]
for $ \gamma = \kabcd \in \Gamma_0(4) $, where $ \leg{\cdot}{\cdot} $ denotes the \emph{extended Legendre symbol}.

Let $ \Gamma \subset \Sl_2(\Z) $ be a congruence subgroup containing $ T := \kzxz{1}{1}{0}{1} $ such that $ \Gamma \subset \Gamma_0(4) $ if $ \kappa \in \frac{1}{2}+\Z $. The function $ F $ satisfies modularity of weight $ \kappa \in \frac{1}{2}\Z $ on $ \Gamma $ with character $ \chi $ if
\[
	F\vert_\kappa \gamma = \chi(d)F
\]
for every $ \gamma \in \Gamma $. If $ F $ is also holomorphic on $ \H $ and $ F(\tau) $ grows at most polynomially in $ v $ as $ \tau = u+iv \to \Q\cup\{i\infty\} $, then $ F $ is called a \emph{(holomorphic) modular form}. A holomorphic modular form which vanishes at the cusps of $ \Gamma $ is called a \emph{cusp form}. The weight $ \kappa $ modular forms, respectively cusp forms, on $ \Gamma $ with character $ \chi $ form a complex vector space which we denote by $ M_\kappa(\Gamma,\chi) $, respectively $ S_\kappa(\Gamma,\chi) $. To determine whether two modular forms are equal, we use the Sturm bound \cite[Corollary~9.20]{Ste}, a consequence of the \emph{valence formula}.

%%%%%%%%%%%%%%%%%%%%%%%%%%%%%%%%%%%%%%%%%%%%%%%%%%%%%%%%%%%%%%%%%%%%%%%%
%\begin{lemma}[Sturm bound]
%\label{lem:sturm}
%Let $ \kappa \in \Z_{\geq 2} $, $ \Gamma \in \Sl_2(\Z) $ be a congruence subgroup, and $ \chi $ be a Dirichlet character. If $ f = \sum_{n=0}^\infty c_f(n)q^n $ and $ g = \sum_{n=0}^\infty c_g(n)q^n $ are two modular forms in $ M_\kappa(\Gamma,\chi) $ such that
%\[
%	c_f(n) = c_g(n)\qquad\text{for}\qquad 0 \leq n \leq \left\lfloor \frac{\kappa m}{12} \right\rfloor,
%\]
%where $ m := [\Sl_2(\Z):\Gamma] $, then $ f = g $.
%\end{lemma}
%%%%%%%%%%%%%%%%%%%%%%%%%%%%%%%%%%%%%%%%%%%%%%%%%%%%%%%%%%%%%%%%%%%%%%%%

There are also different types of non-holomorphic modular forms. For $ \kappa \in \frac{1}{2}\Z $, the \emph{weight $ \kappa $ hyperbolic Laplace operator} is defined by
\[
	\Delta_\kappa := -v^2\left(\frac{\partial^2}{\partial u^2}+\frac{\partial^2}{\partial v^2}\right)+i\kappa v\left(\frac{\partial}{\partial u}+i\frac{\partial}{\partial v}\right).
\]

A function $ F: \H \to \C $ is called a \emph{harmonic Maass form of weight $ \kappa \in \frac{1}{2}\Z $} if $ F $ is smooth, $ F $ satisfies weight $ \kappa $ modularity, $ \Delta_\kappa(F) = 0 $, and there exists $ a \in \R $ such that
\[
	F(u+iv) = O\left(e^{av}\right)\text{ as $ v \to \infty $}\quad\text{and}\quad F(u+iv) = O\left(e^{\frac{a}{v}}\right)\text{ for $ u \in \Q $ as $ v \to 0^+ $}.
\]
A harmonic Maass form which is also holomorphic on $ \H $ is called a \emph{weakly holomorphic modular form}. The \emph{$ \xi $-operator of weight $ \kappa $} is defined by $ \xi_\kappa := 2iv^{\kappa}\overline{\frac{\partial}{\partial\overline{\tau}}} $. If $ F $ is a harmonic Maass form of weight $ \kappa $, then $ \xi_\kappa F $ is a weakly holomorphic modular form of weight $ 2-\kappa $ called the \emph{shadow} of $ F $.

A function $ F: \H \to \C $ satisfying weight $ \kappa $ modularity is an \emph{almost holomorphic modular form} if there exist holomorphic functions $ F_j: \H \to \C $ for $ 0 \leq j \leq \ell $ such that $ F(\tau) = \sum_{j=0}^\ell F_j(\tau)v^{-j} $. The function $ F_0 $ is called a \emph{quasimodular form}.

Next, we review the facts about some modularity-preserving operators. Let $ \delta \in \Z_{\geq 1} $. If $ F: \H \to \C $, then the \emph{$ V $-operator} is defined by
\[
	F\vert V_\delta(\tau) := F(\delta\tau).
\]
If $ F $ has the Fourier expansion $ F(\tau) = \sum_{n \in \Z}c_{F,v}(n)q^n $, then the \emph{$ U $-operator} is defined by
\[
	F\vert U_\delta(\tau) := \sum_{n \in \Z}c_{F,\frac{v}{\delta}}(\delta n)q^n.
\]

For $ N_0, N_1 \in \Z_{\geq 1} $, let $ \Gamma_{N_0, N_1} := \Gamma_0(N_0) \cap \Gamma_1(N_1) $. Suppose that $ F $ satisfies weight $ \kappa \in \frac{1}{2}\Z $ modularity on $ \Gamma_{N_0, N_1} $ with character $ \chi $, where $ N_1 \mid N_0 $ and $ 4 \mid N_0 $ if $ \kappa \in \frac{1}{2}+\Z $. The function $ F \vert U_\delta $ satisfies weight $ \kappa $ modularity on $ \Gamma_{\lcm(N_0,\delta),N_1} $ with character $ \chi\cdot\leg{\delta}{\cdot}^{2\kappa} $, and $ F \vert V_\delta $ satisfies weight $ \kappa $ modularity on $ \Gamma_{N_0\delta, N_1} $ with character $ \chi\cdot\leg{\delta}{\cdot}^{2\kappa} $.

Let $ M \in \Z_{\geq 1} $ and $ m \in \Z $. If $ F = \sum_{n \in \Z}c_{F,v}(n)q^n $, then the \emph{sieving operator} is defined by
\[
	F\vert S_{M,m}(\tau) := \sum_{\substack{n\in\Z\\n \equiv m \pmod{M}}}c_{F,v}(n)q^n.
\]

The operator acts on functions satisfying weight $ \kappa $ modularity as follows.

\begin{lemma}
\label{lem:S}
Let $ M \in \Z_{\geq 1} $ and $ m \in \Z $. Let $ \kappa \in \frac{1}{2}\Z $, $ N_0, N_1 \in \Z_{\geq 1} $ such that $ N_1 \mid N_0 $ and $ 4 \mid N_0 $ if $ \kappa \in \frac{1}{2}+\Z $. Let $ \chi $ be a Dirichlet character of conductor $ N_{\chi} $. Let
\[
	\Gamma := \begin{cases}
		\Gamma_{\lcm(N_0,M^2,MN_1,MN_{\chi}), \lcm(N_1,M)}&\text{if $ \kappa \in \Z $ or $ M \not\equiv 2 \pmod{4} $,}\\
		\Gamma_{\lcm(N_0,4M^2,MN_1,MN_{\chi}), \lcm(N_1, M)}&\text{if $ \kappa \in \frac{1}{2}+\Z $ and $ M \equiv 2 \pmod{4} $.}
	\end{cases}
\]
If $ F: \H \to \C $ satisfies weight $ \kappa $ modularity on $ \Gamma_{N_0, N_1} $ with character $ \chi $, then $ F\vert S_{M,m} $ satisfies weight $ \kappa $ modularity on $ \Gamma $ with character $ \chi $.

Moreover, if $ M \mid 24 $, then $ \Gamma $ can be replaced with
\[
	\Gamma' := \begin{cases}
		\Gamma_{\lcm(N_0,M^2,MN_1,MN_{\chi}), N_1}&\text{if $ \kappa \in \Z $ or $ M \not\equiv 2 \pmod{4} $,}\\
		\Gamma_{\lcm(N_0,4M^2,MN_1,MN_{\chi}), N_1}&\text{if $ \kappa \in \frac{1}{2}+\Z $ and $ M \equiv 2 \pmod{4} $.}
	\end{cases}
\]
\end{lemma}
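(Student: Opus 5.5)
The plan is to write the sieving operator as a twisted average of translates and then check how each piece transforms. Put $\zeta_M := e^{2\pi i/M}$. Since $\tau \mapsto \tau + j/M$ multiplies $q^n$ by $\zeta_M^{nj}$ and leaves $v$ unchanged, orthogonality of characters of $\Z/M\Z$ gives
\[
	F\vert S_{M,m}(\tau) = \frac{1}{M}\sum_{j=0}^{M-1}\zeta_M^{-mj}F\!\left(\tau+\frac{j}{M}\right).
\]
It therefore suffices to understand each summand $F\vert_\kappa A_j$ with $A_j := \kzxz{1}{j/M}{0}{1}$, acted on by a given $\gamma = \kabcd \in \Gamma$. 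The key identity is
\[
	A_j\gamma = \gamma' A_{j'},\qquad \gamma' := A_j\gamma A_{j'}^{-1} = \kzxz{a+cj/M}{\ast}{c}{d-cj'/M}.
\]
We want to choose $j'$ so that $\gamma' \in \Gamma_{N_0,N_1}$ and the induced map $j \mapsto j'$ is a permutation of $\Z/M\Z$ compatible with the phases $\zeta_M^{-mj}$.

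First, integrality. The upper-right entry of $\gamma'$ is $b + (dj - aj')/M - cjj'/M^2$. Because $M^2 \mid c$, the last term is an integer, so we need $dj \equiv aj' \pmod M$. From $M \mid c$ we get $ad \equiv 1 \pmod M$, and the condition $d \equiv 1 \pmod{\lcm(N_1,M)}$ then forces $a \equiv 1 \pmod M$, so $j' = j$ works. Second, membership of $\gamma'$ in $\Gamma_{N_0,N_1}$. The lower-left entry is $c$, which is divisible by $N_0$. The lower-right entry is $d - cj/M$, and this is $\equiv 1 \pmod{N_1}$ because $MN_1 \mid c$. Third, the character. The value becomes $\chi(d - cj/M)$, which equals $\chi(d)$ because $MN_\chi \mid c$. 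Finally, the automorphy factor of $A_j$ is trivial, so $(F\vert A_j)\vert\gamma = (F\vert\gamma')\vert A_j = \chi(d)F\vert A_j$ for integral $\kappa$. Summing over $j$ gives the claim.

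For the ``moreover'' part the hypothesis $d \equiv 1 \pmod M$ is dropped. Here I would use that every unit modulo a divisor of $24$ squares to $1$, so $a \equiv d^{-1} \equiv d \pmod M$. Then $j' = j$ still satisfies $dj \equiv aj' \pmod M$, and no permutation of the indices is needed. The checks of the lower-right entry and of the character are the same as before.

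The main obstacle is the half-integral weight case, because the multiplier $\leg{c}{d}\varepsilon_d^{2\kappa}$ is evaluated at $\gamma'$ rather than at $\gamma$. We need
\[
	\leg{c}{d-cj/M} = \leg{c}{d}\qquad\text{and}\qquad \varepsilon_{d-cj/M} = \varepsilon_d.
\]
The second equality needs $4 \mid cj/M$. This holds once $4M \mid c$, which is automatic from $M^2 \mid c$ and $4 \mid N_0$ unless $M \equiv 2 \pmod 4$; that is exactly why the second case replaces $M^2$ by $4M^2$. For the first equality I would use that the Kronecker symbol $\leg{c}{\cdot}$ is periodic modulo $c$ or $4c$ on odd arguments. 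I would check this via quadratic reciprocity, splitting into the cases where $c/M$ is even or odd, and handling the sign conventions for negative $c$ and $d$ carefully. I expect this to be the fiddliest step.
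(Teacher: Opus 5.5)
Your integral-weight argument is correct and complete. Writing $F\vert S_{M,m}$ as $\frac1M\sum_{j}\zeta_M^{-mj}F(\tau+j/M)$, conjugating by $A_j$, and checking integrality, level and character is the standard argument that the paper leaves as folklore. The congruence $d^2\equiv 1\pmod M$ for $M\mid 24$ is also the right reason for the ``moreover'' part; for general $M$ the same computation gives $F\vert S_{M,m\bar d^{2}}$, with $\bar d$ the inverse of $d$ modulo $M$. The paper gives no argument for half-integral weight and simply cites \cite[Lemma 2.3(2)]{BK2}, and that is where your proposal has a gap.

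\textbf{The gap.} The step $\leg{c}{d-cj/M}=\leg{c}{d}$ does not follow from periodicity of $\leg{c}{\cdot}$ modulo $c$ or $4c$, because $d-cj/M$ differs from $d$ only by a multiple of $c/M$. Your planned split by the parity of $c/M$ is also empty, since you already showed $4\mid c/M$.

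\textbf{What is needed.} You need $\leg{c}{\cdot}$ to be periodic modulo $c/M$ on integers prime to $c$, and this uses the square divisor of $c$. Write $c=M^2e$ and take $n\in\{d,\,d-cj/M\}$; both are coprime to $c$, being lower-right entries of $\gamma$ and $\gamma'$. Jacobi multiplicativity in the numerator gives $\leg{c}{n}=\leg{M}{|n|}^2\leg{e}{n}=\leg{e}{n}$. The sign convention is unaffected, since $c$ and $e$ have the same sign. On odd $n$, the extended symbol $\leg{e}{\cdot}$ is the Kronecker character, which is periodic modulo $4|e|$, and modulo $|e|$ if $4\mid e$. It remains to check that this period divides $c/M=Me$:
\begin{enumerate}
\item if $4\mid M$, this is immediate;
\item if $M$ is odd, $4\mid N_0$ forces $4M^2\mid c$, hence $4\mid e$;
\item if $M\equiv 2\pmod 4$, the level $4M^2$ forces $4\mid e$.
\end{enumerate}

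In the last case it is this step, not $\varepsilon_d$, that forces the full factor $4$. Take $M=2$, $c=8$, $d=1$, $j=1$: then $\varepsilon_{-3}=\varepsilon_1$, but $\leg{8}{-3}=-1\neq\leg{8}{1}$.

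Finally, in half-integral weight you should verify $(F\vert A_j)\vert\gamma=(F\vert\gamma')\vert A_j$ directly rather than through a cocycle relation. It does hold, because $c(\tau+j/M)+(d-cj/M)=c\tau+d$ exactly.
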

\begin{proof}
The case $ \kappa \in \Z $ is perhaps folklore and the case $ \kappa \in \frac{1}{2}+\Z $ is \cite[Lemma 2.3(2)]{BK2}.
\end{proof}

The \emph{$ \psi $-twist} of $ F = \sum_{n \in \Z}c_{F, v}(n)q^n $ by a Dirichlet character $ \psi $ is defined by
\[
	(F\otimes\psi)(\tau) := \sum_{n \in \Z}\psi(n)c_{F,v}(n)q^n.
\]
If $ N_{\psi} $ is the conductor of $ \psi $ and $ F \in M_\kappa(\Gamma_0(N),\chi) $, then $ F\otimes\psi \in M_\kappa(\Gamma_0(\lcm(N,N_{\psi}^2)),\chi\psi^2) $.

For $ k \in \{0,1\} $ and a Dirichlet character $ \chi $ of conductor $ N_{\chi} $ such that $\chi(-1)=(-1)^k$, define a \emph{theta function} by
\[
	\theta_{k,\chi}(\tau) := \sum_{n \in \Z}\chi(n)n^kq^{n^2}.
\]
We have $ \theta_{0,\chi}(\tau) \in M_{\frac{1}{2}}\left(\Gamma_0(4N_{\chi}^2),\chi\right) $ if $ \chi $ is even, and $ \theta_{1,\chi}(\tau) \in S_{\frac{3}{2}}\left(\Gamma_0(4N_{\chi}^2),\chi\chi_{-4}\right) $ if $ \chi $ is odd, where $ \chi_{-4} $ is the non-principal character modulo $ 4 $ \cite[Theorem 1.44]{On1}. In particular, if $ \chi = \chi_0 $ is the trivial character, then we use the notation $ \theta_0 := \theta_{0,\chi_0} $.

For $ k \in \Z_{\geq 0} $, $ m \in \Z $, and $ M \in \Z_{\geq 1} $, let
\[
	\theta_{k,m,M}(\tau) := \sum_{\substack{n \in \Z\\n \equiv m \pmod{M}}}n^kq^{n^2}.
\]
If $ k = 0 $, then we write simply $ \theta_{m,M} $ for $ \theta_{k,m,M} $. We have $ \theta_{m,M} \in M_{\frac{1}{2}}\left(\Gamma_{4M^2,M}\right) $ and $ \theta_{1,m,M} \in S_{\frac{3}{2}}\left(\Gamma_{4M^2,M}\right) $ (this follows from \Cref{lem:S}).

\subsection{Hurwitz class numbers}
\label{subsec:h}

Let $ D \in \Z $ be a negative discriminant and let $ \calQ_D $ be the set of binary integral quadratic forms of discriminant $ D $. For $ Q \in \calQ_D $, let $ \Gamma_Q $ be the \emph{stabilizer group} of $ Q $ in $ \Sl_2(\Z) $. The \emph{$|D|$-th Hurwitz class number} is defined by
\[
	H(|D|) := \sum_{Q \in \calQ_D/\Sl_2(\Z)}\frac{1}{\omega_Q},
\]
where
\[
	\omega_Q := \frac{\left|\Gamma_Q\right|}{2} = \begin{cases}
		2&\text{if $ Q = a(x^2+y^2) $,}\\
		3&\text{if $ Q = a(x^2+xy+y^2) $,}\\
		1&\text{otherwise.}
	\end{cases}
\]
We set $ H(0) := -\frac{1}{12} $ and $ H(n) := 0 $ if $ n \equiv 1,2 \pmod{4} $ or $ n \notin \Z_{\geq 0} $.

For a negative discriminant $ d \in \Z $, the \emph{class number} $ h(d) $ is the number of classes of primitive binary integral quadratic forms of discriminant $ d $. Let $ h^*(d) $ denote the class number weighted by half of the automorphism group, i.e.,
\[
	h^*(d) := \begin{cases}
		\frac{h(d)}{3}&\text{if $ d = -3 $,}\\
		\frac{h(d)}{2}&\text{if $ d = -4 $,}\\
		h(d)&\text{else.}
	\end{cases}
\]
It follows from the definition that
\[
	H(|D|) = \sum_{\substack{f^2 \mid D\\\frac{D}{f^2}\equiv 0, 1 \pmod{4}}}h^*\left(\frac{D}{f^2}\right)
\]
for $ D \in \Z $ negative. In particular, if $ p \equiv 3 \pmod{4} $ is a prime, then $ H(p) = h^*(-p) $.

Consider the generating function
\[
	\calH(\tau) := \sum_{n=0}^\infty H(n)q^n.
\]

It is a remarkable fact due to Zagier that $ \calH $ can be completed to a harmonic Maass form.

\begin{theorem}[{\cite[Chapter 2, Theorem 2]{HZ}}]
The function
\[
	\widehat{\calH}(\tau) := \calH(\tau)+\frac{1}{8\pi\sqrt{v}}+\frac{1}{4\sqrt{\pi}}\sum_{n=1}^\infty n\Gamma\left(-\frac{1}{2},4\pi n^2v\right) q^{-n^2}
\]
is a harmonic Maass form of weight $ \frac{3}{2} $ on $ \Gamma_0(4) $ which grows at most polynomially towards all cusps.
\end{theorem}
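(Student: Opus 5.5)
The plan is to realize $\widehat{\calH}$ as the value at a special point of a non-holomorphic Eisenstein series of weight $\frac{3}{2}$ on $\Gamma_0(4)$, following Hecke's trick; this is Zagier's original route. Weight $\frac{3}{2}$ lies below the range of absolute convergence, so that is where the work is.

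\textbf{Construction.} For $\Re(s)$ large, I form the two Eisenstein series attached to the cusps $\infty$ and $0$ of $\Gamma_0(4)$:
\[
E_\infty(\tau,s) := \sum_{\gamma \in \Gamma_\infty\backslash \Gamma_0(4)} \left(v^{s}\right)\big\vert_{\frac{3}{2}}\gamma ,
\]
together with the analogous series $E_0(\tau,s)$ at the cusp $0$, built using the slash operator from \Cref{sec:prelim} with its factors $\leg{c}{d}\varepsilon_d^{-1}$. I then take the linear combination $E(\tau,s)$ that projects onto Kohnen's plus space. This combination is characterized by having Fourier coefficients supported on $n\equiv 0,3 \pmod 4$, which matches the support of $H(n)$. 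By construction each $E(\tau,s)$ transforms with weight $\frac{3}{2}$ on $\Gamma_0(4)$. It is also an eigenfunction of $\Delta_{3/2}$, with eigenvalue a polynomial in $s$ that vanishes at $s=0$.

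\textbf{Fourier expansion.} Next I compute the Fourier expansion by the standard Poisson summation and Bessel-integral computation. The $n$-th coefficient factors as a Gauss-sum (Kloosterman-type) Dirichlet series in $c$ times a confluent hypergeometric (Whittaker) function of $4\pi n v$ depending on $s$.
\begin{itemize}
\item The Gauss sums for the theta multiplier are evaluated explicitly. For $n=|D|$ with $-n$ a discriminant, the Dirichlet series in $c$ becomes $L(s+\frac{1}{2},\chi_{D_0})$ times a finite Euler-product correction in the conductor $f$, where $D=D_0f^2$ with $D_0$ fundamental. It also carries a factor $\zeta(2s+1)^{-1}$.
\item The series then continues meromorphically to $s=0$, and the limit is taken term by term.
\item For $n>0$ the Whittaker factor becomes $e^{-2\pi n v}$, that is, holomorphic. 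By Dirichlet's class number formula, $L(1,\chi_{D_0})=\pi h^*(D_0)/\sqrt{|D_0|}$ up to the standard constant. Summing the correction over $f^2\mid D$ reproduces the identity $H(|D|)=\sum_f h^*(D/f^2)$ stated in \Cref{subsec:h}. After normalizing by a constant, the positive coefficients are exactly $H(n)$, and the constant term gives $-\frac{1}{12}$.
\item For $n<0$ the coefficient vanishes at $s=0$ unless $-n$ is a square. If $-n=m^2$, the $L$-function degenerates to $\zeta$, which has a pole cancelling the zero. What survives is $\frac{1}{4\sqrt{\pi}}\,m\,\Gamma\!\left(-\frac{1}{2},4\pi m^2v\right)q^{-m^2}$.
\item The second solution of the constant term, $v^{1-s-3/2}$, becomes $\frac{1}{8\pi\sqrt{v}}$ at $s=0$.
\end{itemize}

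\textbf{Conclusion.} Modularity persists at $s=0$ by analytic continuation. Harmonicity follows because the Laplace eigenvalue vanishes there. The growth conditions at all cusps follow from the explicit expansions at $\infty$ and at $0$; the third cusp $\frac{1}{2}$ is handled by the plus-space relation. As a consistency check I would compute $\xi_{3/2}\widehat{\calH}$ and expect a constant multiple of $\theta_0$, matching the nonholomorphic terms.

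\textbf{Main obstacle.} I expect the hardest step to be the analytic continuation to $s=0$ combined with the exact evaluation of the Gauss sums, including the $2$-adic local factor and the plus-space projection. The normalizing constants must come out precisely so as to give $H(n)$, $H(0)=-\frac{1}{12}$ and the stated nonholomorphic coefficients. If the $2$-adic bookkeeping becomes unwieldy, the fallback is the Bruinier--Funke theta lift of the constant function $1$ on $X_0(1)$. That lift gives the modularity and harmonicity of a generating series of CM-point degrees directly, and these degrees are the Hurwitz class numbers; only the nonholomorphic part then has to be computed.
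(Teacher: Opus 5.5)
Your plan is Zagier's original Hecke-trick argument: weight $\frac{3}{2}$ Eisenstein series at the cusps $\infty$ and $0$ of $\Gamma_0(4)$, their plus-space combination, Fourier coefficients via Gauss sums and Dirichlet's class number formula, and continuation to $s=0$. This is exactly the proof in \cite[Chapter 2, Theorem 2]{HZ}, which the paper cites without giving an argument of its own, so your approach is correct and matches the cited route.

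There is one normalization slip to fix. With the seed $v^s$, the sum over $c$ is of $c^{-3/2-2s}$ times Gauss sums of size $\sqrt{c}$. The coefficients therefore involve $L(1+2s,\chi_{D_0})/\zeta(2+4s)$, up to finitely many Euler factors, and not $L(s+\frac{1}{2},\chi_{D_0})/\zeta(2s+1)$. As you wrote it, the factor $\zeta(2s+1)^{-1}$ would kill every positive coefficient at $s=0$. It would also move both the $L(1,\chi_{D_0})$ evaluation and the $\zeta$-pole for $-n=m^2$ to $s=\frac{1}{2}$.
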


It is useful to have a relation between $ H(p^2|D|) $ and $ H(|D|) $ for a prime $ p $. If $ D \in \Z_{<0} $ and $ p $ is odd, then we set $ D_p := \frac{D}{p^{2\alpha}} $, where $ \alpha \in \Z_{\geq 0} $ is such that $ 2\alpha \leq \ord_p(D) \leq 2\alpha+1 $.

\begin{lemma}[{\cite[Lemma 2.5]{KP1}}]
\label{lem:delta1}
If $ D \in \Z_{<0} $ and $ p $ is an odd prime, then
\[
	H\left(|D|p^2\right) = pH\left(|D|\right)+\left(1+\leg{D_p}{p}\right)H\left(|D_p|\right).
\]
\end{lemma}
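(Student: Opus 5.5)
The plan is to reduce everything to the classical relation between class numbers of orders in the same imaginary quadratic field, using the decomposition
\[
	H(|D|) = \sum_{\substack{f^2 \mid D\\ \frac{D}{f^2}\equiv 0,1 \pmod{4}}} h^*\left(\frac{D}{f^2}\right)
\]
recorded in \Cref{subsec:h}. Write $D = D_p p^{2\alpha}$ as in the definition of $D_p$, so that $p^2 \nmid D_p$ unless $D_p$ is a discriminant with $p \| D_p$ or $p \nmid D_p$. Split each divisor $f$ appearing in the sum for $H(|D|p^2)$ as $f = p^{j}g$ with $p \nmid g$ and $0 \le j \le \alpha+1$. The terms with $j \ge 1$ correspond bijectively, via $f \mapsto f/p$, to all terms of the sum for $H(|D|)$. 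The terms with $j = 0$ are the new terms, namely $h^*(Dp^2/g^2)$ with $p \nmid g$.

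The first key step is the Dedekind formula for orders. For a discriminant $d$ and an odd prime $p$,
\[
	h^*(dp^2) = h^*(d)\left(p - \leg{d}{p}\right).
\]
This holds with $h^*$ (not $h$) on both sides, because weighting by half the unit group absorbs the index of unit groups for $d \in \{-3,-4\}$. Applying it to each $j \ge 1$ block lets us express $\sum_{j\ge1}$ in terms of $H(|D|)$. Concretely, one telescopes level by level in $j$: each application multiplies by $p$ and produces a correction proportional to $\leg{\cdot}{p}$. The correction is nonzero only at the bottom level, where the discriminant is $D_p/g^2$. At every higher level the discriminant is divisible by $p$ and the symbol vanishes.

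The second step is to collect the contributions into $p\sum h^*(\cdot) = pH(|D|)$ plus a boundary term. The boundary term is a multiple of $\sum_{p\nmid g} h^*(D_p/g^2) = H(|D_p|)$, with coefficient $1 \pm \leg{D_p}{p}$. The ``$1$'' comes from the $j=0$ level, and the Legendre part comes from the Dedekind factor at the bottom level. Here one uses that $\leg{D_p/g^2}{p} = \leg{D_p}{p}$ since $p \nmid g$.

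The main obstacle, and the step I would check most carefully, is the sign convention in this boundary coefficient. The Dedekind factor naturally produces $-\leg{D_p}{p}$. So to land on the stated $1+\leg{D_p}{p}$, the symbol $\leg{D_p}{p}$ must be read in the normalization used in \cite{KP1}, and that normalization has to be matched with the Kronecker symbol used in the class number formula. I would first test the identity numerically on small cases, say $D=-3$ and $p=5$, or $D=-4$ and $p=3$. This pins down exactly which convention makes the lemma as stated hold. I would then carry that convention through the telescoping above, handling the cases $p \mid D_p$ (symbol $0$) and $p\nmid D_p$ separately.
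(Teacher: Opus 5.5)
The gap is in your last step. Your computation correctly shows that the boundary coefficient is $1-\leg{D_p}{p}$. You then plan to find a ``normalization'' of the symbol that turns this into the printed $1+\leg{D_p}{p}$, but no such normalization exists. For an odd prime $p$ the symbol is the ordinary Legendre symbol, and the Kronecker symbol agrees with it. The test cases you propose actually refute the printed identity:
\begin{itemize}
\item For $D=-3$, $p=5$ we have $\leg{-3}{5}=-1$ and $H(75)=\frac73$ (reduced forms $[1,1,19]$, $[3,3,7]$, and $[5,5,5]$ with weight $\frac13$). The printed right-hand side is $5H(3)+\bigl(1+\leg{-3}{5}\bigr)H(3)=\frac53$.
\item For $D=-4$, $p=3$ we have $H(36)=\frac52$ (forms $[1,0,9]$, $[2,2,5]$, and $[3,0,3]$ with weight $\frac12$), against a printed right-hand side of $\frac32$.
\end{itemize}
So the statement as printed has a sign misprint, and the true identity is
\[
	H\left(|D|p^2\right)=pH(|D|)+\left(1-\leg{D_p}{p}\right)H\left(|D_p|\right).
\]
This agrees with \Cref{lem:delta2}: for odd $p$ one has $p^2\nmid\Delta$, so $D_p=\Delta g^2$ in the notation there and $\leg{D_p}{p}=\leg{\Delta}{p}$. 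It is also exactly the form the paper uses when it applies the lemma in the proof of \Cref{lem:Hp}, where it writes $\bigl(1-\leg{D_p}{p}\bigr)$ and $\bigl(1-\leg{-1}{p}\bigr)$. Your argument, carried through honestly, proves this corrected version. You should say so and flag the misprint rather than search for a convention that yields ``$+$''.

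Your bookkeeping also needs tightening. With $f=p^jg$ and $f\mapsto f/p$, the $j\ge 1$ block equals $H(|D|)$ on the nose, so the Dedekind formula is not what handles it. The real content is comparing the new $j=0$ block with $(p-1)H(|D|)$. Also, $\ord_p(D_p)\in\{0,1\}$ always, so your sentence about when $p^2\nmid D_p$ is garbled.

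A clean version goes as follows. Write $D=\Delta p^{2\alpha}g^2$ with $\Delta$ fundamental and $p\nmid g$, so that $D_p=\Delta g^2$. For $e\mid g$ set $a_i(e):=h^*(\Delta p^{2i}e^2)$. The Dedekind formula gives $a_1(e)=\bigl(p-\leg{\Delta}{p}\bigr)a_0(e)$ and $a_{i+1}(e)=p\,a_i(e)$ for $i\ge 1$. Hence
\[
	H\left(|D|p^2\right)=\sum_{e\mid g}\sum_{i=0}^{\alpha+1}a_i(e)=\sum_{e\mid g}\left(\Bigl(1+p-\leg{\Delta}{p}\Bigr)a_0(e)+p\sum_{i=1}^{\alpha}a_i(e)\right).
\]
Subtracting $pH(|D|)=\sum_{e\mid g}\bigl(p\,a_0(e)+p\sum_{i=1}^{\alpha}a_i(e)\bigr)$ leaves $\bigl(1-\leg{\Delta}{p}\bigr)\sum_{e\mid g}a_0(e)=\bigl(1-\leg{D_p}{p}\bigr)H(|D_p|)$.

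For comparison, the paper gives no argument of its own here. It cites \cite[Lemma 2.5]{KP1} for discriminants and notes that for $D\equiv 2,3\pmod 4$ all three class numbers vanish. Your derivation is self-contained, and it gets the non-discriminant case for free, since the defining sums are then empty.
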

\begin{proof}
This was stated as \cite[Lemma 2.5]{KP1} under the assumption that $ D $ is a discriminant. If $ D $ is not a discriminant, i.e., $ D \equiv 2, 3 \pmod{4} $, then neither are $ Dp^2 $ and $ D_p $, hence both sides are zero.
\end{proof}

We also need a version of the previous result which works for every prime $ p $, including $ p = 2 $.

\begin{lemma}
\label{lem:delta2}
Let $ D = \Delta f^2 $, where $ \Delta < 0 $ is a fundamental discriminant and $ f \in \Z_{\geq 1} $, and let $ p $ be a prime. If $ f = p^\alpha g $ with $ p \nmid g $, then
\[
	H(|D|p^2) = pH(|D|)+\left(1-\leg{\Delta}{p}\right)H(|\Delta|g^2).
\]
\end{lemma}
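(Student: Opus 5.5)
We prove the formula with the standard class number formula for orders,
\[
	H(|D|) = \sum_{d \mid f}h^*(\Delta d^2), \qquad h^*(\Delta d^2) = h^*(\Delta)\, d\prod_{\ell \mid d}\left(1-\leg{\Delta}{\ell}\frac{1}{\ell}\right),
\]
valid for $D = \Delta f^2$ with $\Delta$ fundamental. Write $\phi(d) := h^*(\Delta d^2)/h^*(\Delta)$. Then $\phi$ is multiplicative in $d$, with $\phi(\ell^j) = \ell^{j-1}\left(\ell-\leg{\Delta}{\ell}\right)$ for $j \geq 1$. Set $\Psi(f) := \sum_{d\mid f}\phi(d)$, so that $H(|D|) = h^*(\Delta)\Psi(f)$. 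The function $\Psi$ is multiplicative, which reduces everything to a computation at the prime $p$.

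Since $Dp^2 = \Delta (fp)^2$ and $fp = p^{\alpha+1}g$, we get
\[
	H(|D|p^2) = h^*(\Delta)\Psi(p^{\alpha+1})\Psi(g), \qquad H(|D|) = h^*(\Delta)\Psi(p^\alpha)\Psi(g), \qquad H(|\Delta|g^2) = h^*(\Delta)\Psi(g).
\]
The claim is therefore equivalent to the local identity
\[
	\Psi(p^{\alpha+1}) = p\Psi(p^\alpha) + 1 - \leg{\Delta}{p}.
\]

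To verify it, write $\epsilon := \leg{\Delta}{p}$. We have
\[
	\Psi(p^{\alpha+1}) = 1 + \sum_{j=1}^{\alpha+1}p^{j-1}(p-\epsilon), \qquad p\Psi(p^\alpha) = p + \sum_{j=2}^{\alpha+1}p^{j-1}(p-\epsilon).
\]
Their difference is $1 + (p-\epsilon) - p = 1-\epsilon$, as required. This holds for every $\alpha \geq 0$ and every prime $p$, including $p=2$, where $\leg{\Delta}{2}$ is the Kronecker symbol.

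The only real input is the formula for $h^*(\Delta d^2)$ with the automorphism weighting. The $h^*$ normalization is exactly what makes that formula hold without the usual unit-index correction $[\calO_K^\times:\calO^\times]$. The identity $H(|D|) = \sum_{d\mid f} h^*(\Delta d^2)$ is the decomposition over primitive forms already stated in \Cref{subsec:h}, reindexed by the conductor. I would cite the class number formula for orders (e.g. Cox, Theorem 7.24) and check that the $h^*$ convention matches it in the cases $\Delta\in\{-3,-4\}$. That check is the only step needing care; the rest is the telescoping computation above.
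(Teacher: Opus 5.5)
Your proof is correct and is essentially the argument the paper relies on. The paper gives no independent proof: it defers to the proof of \cite[Lemma 2.5]{KP1}. Your route through the class number formula for orders, $H(|D|)=h^*(\Delta)\Psi(f)$ with $\Psi(f)=\sum_{d\mid f}\mu(d)\leg{\Delta}{d}\sigma(f/d)$, followed by the local identity $\Psi(p^{\alpha+1})=p\Psi(p^{\alpha})+1-\leg{\Delta}{p}$, supplies exactly that formula. You also note that the $h^*$ weighting absorbs the unit index and that Cox's formula already uses the Kronecker symbol at $\ell=2$. That is precisely why, as the paper remarks, oddness of $p$ is not needed for this statement.
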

\begin{proof}
This formula can be found in the proof of \cite[Lemma 2.5]{KP1}. The assumption that $ p $ is odd in the statement of the lemma is not necessary (it is needed only in the last step).
\end{proof}

For $ k \in \Z_{\geq 0} $, $ m \in \Z $, and $ M \in \Z_{\geq 1} $, the \emph{$ k $-th moment of the Hurwitz class numbers} is defined by
\[
	H_{k,m,M}(n) := \sum_{t \equiv m \pmod{M}}H(4n-t^2)t^k.
\]
The simpler notation $ H_{m,M}(n) := H_{0,m,M}(n) $ is used for the sum. The even moments were treated in \cite[Theorem 1.1]{KP1} and the odd moments in \cite[Theorem 1.4]{BKP}.

\subsection{Elliptic curves and trace of Frobenius}

Elliptic curves are curves of genus $ 1 $ having a specified base point $ O $. An elliptic curve over a field $ K $ has a \emph{Weierstrass equation}
\[
	y^2+a_1xy+a_3y = x^3+a_2x^2+a_4x+a_6.
\]
If we assume that $ \mathrm{char}(K) \neq 2, 3 $, then $ E $ has a Weierstrass equation of the form
\[
	y^2 = x^3+Ax+B.
\]
The \emph{discriminant} and \emph{$ j $-invariant} of $ E $ are defined by
\begin{align*}
	\Delta(E)& := -16(4A^3+27B^2),\\
	j(E)& := -1728\frac{(4A)^3}{\Delta(E)}.
\end{align*}

Let $ E_1 $ and $ E_2 $ be two elliptic curves. A morphism $ \phi: E_1 \to E_2 $ is called an \emph{isogeny} if $ \phi(O) = O $ \cite[III.4]{Si}. The curves $ E_1 $ and $ E_2 $ are called \emph{isogenous} if there exists an isogeny $ \phi: E_1 \to E_2 $ such that $ \phi(E_1) \neq \{O\} $. Isogeny is an equivalence relation on elliptic curves over $ K $.

Let $ p $ be a prime and $ r \in \Z_{\geq 1} $. The \emph{trace of Frobenius} of $ E $ is defined by
\[
	\tr(E) = \tr_{\F_{p^r}}(E) := p^r+1-\left|E(\F_{p^r})\right|,
\]
where $ \left|E(\F_{p^r})\right| $ is the number of points on $ E $ (including the point at infinity). The \emph{Frobenius endomorphism} of an elliptic curve $ E/\F_{p^r} $ is the map $ \Fr: E \to E $ defined for a point $ P := (x, y) \in E $ by $ \Fr(P) := \left(x^{p^r}, y^{p^r}\right) $. The trace of $ \Fr $ is $ \tr(E) $. By Tate's isogeny theorem \cite[Theorem 1]{Ta}, two elliptic curves $ E_1 $ and $ E_2 $ over $ \F_{p^r} $ are isogenous if and only if $ \tr(E_1) = \tr(E_2) $.

The \emph{Hasse bound} is $ |\tr(E)| \leq 2p^{\frac{r}{2}} $. Thus, if we let
\[
	x(E) := \frac{\tr(E)}{2p^{\frac{r}{2}}}
\]
be the normalized trace of Frobenius, then $ x(E) \in [-1,1] $. For an elliptic curve $ E $ over $ \Q $, we may consider $ x(E) $ for every reduction of $ E $ to the finite field $ \F_p $. The distribution of $ x(E) $ is predicted by the \emph{Sato--Tate conjecture}, proved in \cite{BGHT, CHT, HST}: If $ E $ does not have complex multiplication, then for $ -1 \leq a \leq b \leq 1 $,
\[
	\lim_{N \to \infty}\frac{\left|\{p \leq N:\; a \leq x(E) \leq b\}\right|}{\left|\{p \leq N\}\right|} = \frac{2}{\pi}\int_a^b \sqrt{1-x^2}\dif x.
\]
The interpretation is that the normalized traces are equidistributed with respect to the \emph{Sato--Tate measure}, defined for a Lebesgue measurable set $ A \subset \R $ by
\[
	\mu_{\ST}(A) := \int_A f_{\ST}(x)\dif x,
\]
where
\[
	f_{\ST}(x) := \begin{cases}
		\frac{2}{\pi}\sqrt{1-x^2}&\text{if }x \in [-1, 1],\\
		0&\text{if }x \notin [-1,1].
	\end{cases}
\]

\section{Number of curves with a given trace in the Legendre family}
\label{sec:tr}

The \emph{Legendre family} $ \Leg $ is a family of elliptic curves $ E^{\Leg}_\lambda/\Q $ given for $ \lambda \in \Z\setminus\{0,1\} $ by
\[
	E^{\Leg}_\lambda: y^2 = x(x-1)(x-\lambda).
\]
The discriminant of $ E^{\Leg}_\lambda $ equals
\[
	\Delta(E^{\Leg}_\lambda) = 16\lambda^2(\lambda-1)^2
\]
and the $ j $-invariant equals
\[
	j(E^{\Leg}_\lambda) = \frac{16^2(\lambda^2-\lambda+1)}{\lambda^2(\lambda-1)^2}.
\]

Let $ p \geq 3 $ be a prime and $ r \in \Z_{\geq 1} $. By considering the curves in $ \Leg $ over $ \F_{p^r} $, we obtain the family $ E^{\Leg}_\lambda/\F_{p^r} $ for $ \lambda \in \F_{p^r}\setminus\{0,1\} $. Questions about the distribution of traces of Frobenius in this family were studied by several authors \cite{AT,Ka2,OSS}. In general, we are interested in the number of curves in the family with trace $ t $, defined by
\[
	N^{\Leg}(p^r;t) := \left|\left\{\lambda \in \F_{p^r},\lambda \neq 0,1:\; \tr\left(E^{\Leg}_{\lambda}\right)=t\right\}\right|.
\]
Auer and Top \cite{AT} showed when an elliptic curve $ E/\F_{p^r} $ is isogenous to a curve in the Legendre family and studied the supersingular curves. Katz \cite{Ka2} studied the ratios
\[
	\frac{N^{\Leg}(p^r;t)}{N^{\Leg}(p^r;-t)}.
\]
Ono, Saad, and Saikia \cite{OSS} gave what is the most complete treatment of the traces in $ \Leg $ thus far.

The main goal of this section is to prove a formula for the $ k $-th moment $ S_k^{\Leg}(p^r) $ stated in \Cref{prop:OSS}. It is already contained in the work of Ono, Saad, and Saikia \cite[Proposition 2.11]{OSS}, except that to make it completely explicit, we also evaluate the contribution coming from the supersingular curves.

Let us introduce one useful piece of notation. If $ r $ is even, then let $ \varepsilon(p^r) \in \{-1,1\} $ be such that
\[
	\varepsilon(p^r)p^{\frac{r}{2}} \equiv 1 \pmod{4}.
\]
In other words, $ \varepsilon(p^r) = \leg{-1}{p^{\frac{r}{2}}} $.

\subsection{Supersingular curves}

For a prime $ p \geq 3 $ and $ m := \frac{p-1}{2} $, let $ H_p(t) $ be the \emph{Deuring polynomial} defined by
\[
	H_p(t) := \sum_{i=0}^m \binom{m}{i}^2t^i.
\]
If $ \lambda \in \F_{p^r}\setminus\{0, 1\} $, then $ E^{\Leg}_{\lambda}/\F_{p^r} $ is supersingular if and only if $ H_p(\lambda) = 0 $ \cite[Theorem V.4.1 (b)]{Si}.

\begin{proposition}
\label{prop:SS}
Let $ p \geq 3 $ be a prime, $ r \in \Z_{\geq 1} $, and $ t \in \Z $. If $ p \mid t $, then
\[
	N^{\Leg}(p^r;t) = \begin{cases}
		3H(p)&\text{if $ r $ is odd, $ p^r \equiv 3 \pmod{4} $, $ t=0 $,}\\
		\frac{1}{2}(p-1)&\text{if $ r $ is even, $ t = \varepsilon(p^r) 2p^{r/2} $,}\\
		0&\text{otherwise}.
	\end{cases}
\]
\end{proposition}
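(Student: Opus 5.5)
The plan is to use the fact that every curve $E^{\Leg}_\lambda$ has full rational $2$-torsion $\{O,(0,0),(1,0),(\lambda,0)\}$ over $\F_{p^r}$. Hence $4 \mid |E^{\Leg}_\lambda(\F_{p^r})| = p^r+1-t$, and more precisely the Frobenius $\pi$ satisfies $\pi \equiv 1$ modulo $2\End(E)$. We combine this with the classification of possible supersingular traces (Waterhouse) and with the Deuring polynomial.

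First we collect the facts about $H_p$ that we need. $H_p$ has exactly $m=\frac{p-1}{2}$ distinct roots. None of them is $0$ or $1$, since $H_p(0)=1$ and $H_p(1)=\binom{p-1}{m}\equiv(-1)^m\not\equiv 0$. All roots lie in $\F_{p^2}$ (\cite[Theorem V.4.1]{Si}). So the supersingular $\lambda\in\F_{p^r}\setminus\{0,1\}$ are all $m$ roots if $r$ is even, and only the $\F_p$-rational roots if $r$ is odd.

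\emph{Case $r$ odd.} The supersingular traces are $t=0$ and, for $p\in\{2,3\}$ only, $t=\pm p^{(r+1)/2}$. For $p=3$ the latter make $p^r+1-t$ odd, which contradicts the rational $2$-torsion. Thus $t=0$, and $4\mid p^r+1$ forces $p^r\equiv 3\pmod 4$; otherwise the count is $0$.

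When $p^r\equiv 3 \pmod 4$ (so $p\equiv 3\pmod 4$), it remains to count $\lambda\in\F_p$ with $E_\lambda$ supersingular, because all roots of $H_p$ in $\F_{p^r}$ already lie in $\F_p$. The plan is to count by isomorphism classes over $\F_p$ of curves with trace $0$ and full rational $2$-torsion. Such a curve has $\pi=\sqrt{-p}$ and $\frac{\pi-1}{2}\in\End(E)$, so $\End(E)\supseteq\Z[\frac{1+\sqrt{-p}}{2}]$. By Deuring/Waterhouse, the number of such $\F_p$-classes is governed by $h(-p)=H(p)$ (for $p>3$). For each such $E$, the $\lambda$'s with $E_\lambda\cong E$ correspond to ordered labellings of the three nonzero $2$-torsion points as $0,1,\lambda$, modulo the affine change $x\mapsto ux+v$. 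Such a change preserves the model $y^2=x(x-1)(x-\lambda)$ up to isomorphism only when $u$ is a square, since a nonsquare $u$ gives the quadratic twist. Keeping track of which twist each labelling lands on then gives the factor $3$, so the total is $3H(p)$.

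I would cross-check this count with \Cref{lem:delta2} at $p=2$, which relates $H(4p)$ and $H(p)$ and so separates the two endomorphism orders. I expect this careful bookkeeping of labellings versus twists versus orders to be the main obstacle. As a sanity check I would compare with small primes, e.g.\ $p=7$, where $H_7(t)=1+9t+9t^2+t^3$.

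\emph{Case $r$ even.} All $m$ roots of $H_p$ lie in $\F_{p^r}$. Over $\F_{p^2}$, a supersingular curve with all $2$-torsion rational and $j\neq 0,1728$ has $\pi=\pm p$ as a scalar endomorphism. The sign is fixed by the $2$-divisibility of the $2$-torsion points, i.e.\ whether $(0,0)\in 2E(\F_{p^2})$, which amounts to checking that $-\lambda$ and $1-\lambda$ are squares in $\F_{p^2}$. Following Auer--Top \cite{AT}, this gives $\pi=\leg{-1}{p}p$ for every supersingular Legendre curve over $\F_{p^2}$; the cases $j=0,1728$ need a separate check.

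Raising to the power $r/2$, Frobenius over $\F_{p^r}$ is $\leg{-1}{p}^{r/2}p^{r/2}=\varepsilon(p^r)p^{r/2}$. Hence every supersingular $\lambda$ has trace $2\varepsilon(p^r)p^{r/2}$, which gives $N^{\Leg}(p^r;t)=\frac{p-1}{2}$ for that $t$ and $0$ for every other $t$ with $p\mid t$.
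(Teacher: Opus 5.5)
Your overall structure matches the paper's. The paper quotes the possible supersingular traces from \cite[Lemma 2.1]{Ka2} and the count $N^{\Leg}(p;0)$ from \cite[Proposition 3.2]{AT}. Your derivation of the trace restriction for $r$ odd, from rational $2$-torsion plus Waterhouse, is fine. The gap is the count $3H(p)$ for $r$ odd and $p^r\equiv 3\pmod 4$, which is the whole content of that case. You assert that ``keeping track of which twist each labelling lands on then gives the factor $3$'' but never do it, and you flag it yourself as the main obstacle. To complete your route you need two precise facts in place of ``governed by $h(-p)$''.

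First, the trace-$0$ curves over $\F_p$ with $E[2]\subseteq E(\F_p)$ are exactly those with $\End_{\F_p}(E)=\Z[\frac{1+\sqrt{-p}}{2}]$. For $p>3$ there are exactly $h(-p)$ such $\F_p$-isomorphism classes (Waterhouse, or Delfs--Galbraith).

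Second, each such class receives exactly three values of $\lambda$. An $\F_p$-isomorphism $E_\lambda\to E$, where $E: y^2=(x-e_1)(x-e_2)(x-e_3)$, must be $(x,y)\mapsto(u^2x+e_1,u^3y)$ for a suitable ordering of the roots. Hence the fibre over $E$ is $\{\frac{e_3-e_1}{e_2-e_1}: e_2-e_1\in\F_p^{*2}\}$. Since $\leg{-1}{p}=-1$, exactly three of the six orderings qualify. You must then check that these three values are distinct:
\begin{itemize}
\item for $j\neq 0,1728$, the six orderings give six distinct $\lambda$;
\item for $j=1728$, one has $E\cong E^{(-1)}$ over $\F_p$ and the fibre is $\{-1,2,\frac12\}$;
\item $j=0$ never contributes, since it is supersingular only for $p\equiv 2\pmod 3$, and then $\lambda^2-\lambda+1$ has no root in $\F_p$.
\end{itemize}
Summing over classes then gives $3h(-p)=3H(p)$.

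You also leave $p=3$ untreated, although the statement includes it, and there the count is not $3h(-3)=3$. The class of $y^2=x^3-x$ over $\F_3$ receives only $\lambda=2$, the unique root of $H_3(t)=1+t$. This matches $3H(3)=1$ only because $H(3)=\frac13$, so it needs a separate check.

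In the $r$ even case, the check you defer for $j\in\{0,1728\}$ is unnecessary:
\begin{itemize}
\item $4\mid p^2+1-t$ and $p^2+1\equiv 2\pmod 8$ force $t\equiv 2\pmod 4$, hence $t=\pm 2p$ and $\pi=\pm p$ for every supersingular $\lambda$, whatever $j$ is.
\item Since then $E(\F_{p^2})=E[p\mp 1]$, one has $\pi=\leg{-1}{p}p$ exactly when $(0,0)\in 2E(\F_{p^2})$, i.e.\ exactly when $\lambda$ is a square in $\F_{p^2}$.
\end{itemize}
That squareness is the input you are attributing to \cite{AT}; the paper instead takes the resulting trace directly from \cite[Lemma 2.1]{Ka2}.
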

\begin{proof}
The only possible trace for a supersingular curve in the Legendre family over $ \F_{p^r} $ is $ 0 $ if $ r $ is odd and $ \varepsilon(p^r)2p^{r/2} $ if $ r $ is even \cite[Lemma 2.1]{Ka2}.

First, let $ r $ be odd. If $ \lambda \in \overline{\F_p} $ is such that $ E^{\Leg}_\lambda $ is supersingular, then $ \lambda \in \F_{p^2} $ \cite[Proposition 2.2]{AT}. Hence, if $ \lambda \in \F_{p^r} $ and $ E_\lambda $ is supersingular, then $ \lambda \in \F_p $. By \cite[Proposition 3.2]{AT}, we have
\[
	N^{\Leg}(p;0) = \begin{cases}
		1&\text{if }p = 3,\\
		3h(-p)&\text{if }p \equiv 3 \pmod{4}, p > 3,\\
		0&\text{if }p \equiv 1 \pmod{4}.
	\end{cases}
\]
The result follows from $ H(3) = \frac{1}{3} $ and $ H(p) = h(-p) $ if $ p \equiv 3 \pmod{4} $, $ p > 3 $.

Secondly, let $ r $ be even. The polynomial $ H_p(t) $ has distinct roots in $ \overline{\F_p} $ \cite[Theorem V.4.1 (c)]{Si}. If $ \lambda \in \overline{\F_p} $ is a root, then $ E^{\Leg}_{\lambda} $ is supersingular, and we again have $ \lambda \in \F_{p^2} $. Thus, there are exactly $ \frac{1}{2}(p-1) $ values of $ \lambda \in \F_{p^r} $ such that $ E^{\Leg}_{\lambda} $ is supersingular.
\end{proof}

\subsection{Universal family with a point of order \texorpdfstring{$4$}{4}}
\label{sub:Uni}

Consider the universal family of elliptic curves with a point of order $ 4 $, which will be called the \emph{$\Gamma_1(4)$-family}. It consists of elliptic curves $ E^{\Gamma_1(4)}_\lambda $ for $ \lambda \in \F_{p^r}\setminus\{0,4^{-1}\} $, which are given by the equation
\[
	E^{\Gamma_1(4)}_\lambda: y^2 = (x+\lambda)(x^2+x+\lambda).
\]
The point $ P_4 := (0, \lambda) $ has order $ 4 $ and $ 2P_4 $ is the point $ P_2 := (-\lambda, 0) $.

Katz \cite[p. 4]{Ka2}, in an observation attributed to Deligne, pointed out that the $\Gamma_1(4)$-family is isogenous to the Legendre family $ \Leg $. The 2-isogeny
\[
	\phi_{\lambda}: E^{\Gamma_1(4)}_{\lambda} \to E^{\Leg}_{1-4\lambda}
\]
has kernel equal to the group of order $ 2 $ generated by $ P_2 $. Following \cite{Ve}, it is possible to construct it explicitly.

Similarly as for the Legendre family, we let
\[
	N^{\Gamma_1(4)}(p^r;t) := \left|\left\{\lambda \in \F_{p^r}, \lambda \neq 0, 4^{-1}:\; \tr\left(E^{\Gamma_1(4)}_\lambda\right) = t\right\}\right|.
\]
Isogenous elliptic curves over $ \F_{p^r} $ have the same trace of Frobenius, hence $ N^{\Gamma_1(4)}(p^r;t) = N^{\Leg}(p^r;t) $ for every $ t \in \Z $. Building on work of Katz \cite{Ka2}, we will determine $ N^{\Gamma_1(4)}(p^r;t) $ directly by counting the elliptic curves with a point of order $ 4 $.

We begin by reviewing basic facts about quadratic orders and conductors. Let $ K $ be a quadratic field with the ring of integers $ \calO_K $. For a quadratic order $ \calO \subset \calO_K $, we let $ d(\calO) $ be the discriminant of $ \calO $. The \emph{conductor} $ f $ of $ \calO $ is defined as the index $ f := [\calO_K:\calO] $. The index is finite and $ \calO = \Z+f\calO_K $ \cite[Lemma 7.2]{Cox}. We will use the notation $ \calO(f) $ for a quadratic order of conductor $ f $. The discriminant and conductor of $ \calO = \calO(f) $ are related by the formula $ d(\calO) = f^2d(\calO_K) $.

If $ \calO_1 $, respectively $ \calO_2 $ are two orders in $ \calO_K $ with conductors $ f_1 $, respectively $ f_2 $, then
\[
	\calO_1 \subset \calO_2\qquad\Longleftrightarrow\qquad f_2 \mid f_1,
\]
and
\[
	d(\calO_1) = \frac{f_1^2}{f_2^2}d(\calO_2).
\]

If $ t \in \Z $, $ |t| \leq 2p^{\frac{r}{2}} $, $ p \nmid t $, then we let $ K $ be the quadratic field $ \Q(x)/(x^2-tx+p^r) $ and $ \calO_F $ the quadratic order $ \Z[x]/(x^2-tx+p^r) $ of discriminant $ D := t^2-4p^r $. Since $ d(\calO_K) $ is a fundamental discriminant, it is clear that the conductor $ f $ of $ \calO_F $ is the largest integer $ M $ such that $ M^2 \mid D $ and $ \frac{D}{M^2} \equiv 0, 1 \pmod{4} $ \cite[Lemma 3.4]{Ka2}.

Now let $ E/\F_{p^r} $ be an elliptic curve with $ \tr(E) = t $. The endomorphism ring $ \calO := \End_{\F_{p^r}}(E) $ can be viewed as a quadratic order $ \calO_F \subset \calO \subset \calO_K $. Let $ \left|\calM(p^r;t,\calO)\right| $ be the number of $ \F_{p^r} $-points of order $ 4 $ on $ E $. It is a crucial fact that $ \calM(p^r;t,\calO) $ depends only on $ t $ and $ \calO $, not $ E $. We have the following formula \cite[p. 9]{Ka2}:
\[
	2N^{\Gamma_1(4)}(p^r;t) = \sum_{\calO_F \subset \calO \subset \calO_K}\left|\calM(p^r;t,\calO)\right|\cdot h^*(d(\calO)),
\]
where the sum is taken over all quadratic orders $ \calO $ between $ \calO_F $ and $ \calO_K $. Note that $ N^{\Gamma_1(4)}(p^r;t) $ on the left-hand side is multiplied by $ 2 $ because the normalized class numbers $ h^*(d(\calO)) $ are weighted by half of the order of the automorphism group.

\begin{lemma}[{\cite[Lemma 3.6]{Ka2}}]
\label{lem:K1}
Let $ f $ be the conductor of $ \calO_F $ and write $ f = 2^af_0 $ for $ a \in \Z_{\geq 0} $ and $ f_0 \in \Z_{\geq 1} $ odd. Let $ b \in \Z $ such that $ 0 \leq b \leq a $ and $ f_1 \in \Z_{\geq 1} $ such that $ f_1 \mid f_0 $. Let $ t \in \Z $ such that $ |t| \leq 2p^{\frac{r}{2}} $ and $ p \nmid t $. If $ t \equiv p^r+5 \pmod{8} $, then
\[
	\left|\calM(p^r;t,\calO(2^bf_1))\right| = \begin{cases}
		2&\text{if $ b = a $,}\\
		0&\text{if $ 0 \leq b \leq a-1 $.}
	\end{cases}
\]
\end{lemma}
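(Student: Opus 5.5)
The count $\left|\calM(p^r;t,\calO)\right|$ can be computed from the structure of $E[4](\F_{p^r})$ as a module over $\calO=\End_{\F_{p^r}}(E)$. I take $E$ with $\tr(E)=t$ and $\End(E)=\calO(2^bf_1)$, and let $\pi$ be Frobenius, so $\pi^2-t\pi+p^r=0$. The group $E(\F_{p^r})[4]$ is the kernel of $\pi-1$ on $E[4]$. The standard fact (used by Katz, going back to Lenstra) is that $E[n]$ is a free $\calO/n\calO$-module of rank $1$ whenever $\calO$ contains $\pi$; here $\calO\supset\calO_F=\Z[\pi]$. Hence
\[
E(\F_{p^r})[4]\cong\{x\in\calO/4\calO:(\pi-1)x=0\},
\]
and the number of points of order exactly $4$ is the size of this annihilator minus the size of the annihilator of $\pi-1$ in $\calO/2\calO$.

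To compute these annihilators I write $\pi-1$ in a $\Z$-basis of $\calO$. We have $\calO=\Z+c\,\calO_K$ with $c=2^bf_1$. Since $\pi=\frac{t+f\sqrt{d_K}}{2}$ up to the $\calO_K$ basis choice, we get $\pi\in\Z+f\calO_K$. Writing $\calO_K=\Z[\omega]$, I express $\pi=u+\frac{f}{c}\,(c\omega)$ with $u\in\Z$. Thus in the basis $\{1,c\omega\}$ of $\calO$, the element $\pi-1=(u-1)+m\cdot c\omega$ has $m=f/c=2^{a-b}f_0/f_1$. Only the $2$-adic part matters modulo $4$, so $f_0/f_1$ is a unit and the odd part of $c$ plays no role.

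The key input is the congruence $t\equiv p^r+5\pmod 8$. From it, $N(\pi-1)=p^r+1-t\equiv-4\pmod 8$, so $\#E(\F_{p^r})\equiv 4\pmod 8$. This forces $E(\F_{p^r})[2^\infty]$ to have order exactly $4$, so it is either $\Z/4$ or $(\Z/2)^2$. Consequently $\left|\calM\right|$ is either $2$ (cyclic case) or $0$. It remains to show that the cyclic case occurs exactly when $b=a$. The group is $(\Z/2)^2$ iff $E[2]\subset E(\F_{p^r})$, iff $\pi-1$ kills $\calO/2\calO$, iff $\pi-1\in2\calO$. In the basis above this means $u-1$ and $m$ are both even. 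If $b<a$ then $m$ is even. In that case $u-1$ must also be even, because $N(\pi-1)\equiv 0\pmod 4$ combined with $m$ even makes $(u-1)^2\equiv 0\pmod 4$, after handling the cross term $2(u-1)m\,\tr(c\omega)$. So $b<a$ gives $\left|\calM\right|=0$. If $b=a$ then $m$ is odd, so $\pi-1\notin2\calO$, the $2$-part is cyclic of order $4$, and $\left|\calM\right|=2$.

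I expect the main obstacle to be the freeness statement $E[4]\cong\calO/4\calO$ as $\calO$-modules, which is needed to reduce everything to linear algebra over $\Z/4$. It needs care when $E$ is ordinary and $2\mid$ index, and it is exactly where the hypothesis $p\nmid t$ (ordinary) is used. I would cite it from Katz's paper (or Lenstra, "Complex multiplication structure of elliptic curves") rather than reprove it. The parity bookkeeping for $u$ in the case $d_K\equiv 0$ versus $1\pmod 4$ is routine but should be checked in both cases.
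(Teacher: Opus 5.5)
Your proof is correct, but it takes a genuinely different route from the paper. The paper gives no independent argument: it cites Katz's Lemma 3.6 and observes that Katz's proof is stated under $p^r\equiv 1\pmod 4$ and $p^r+1-A\equiv 0\pmod 8$, yet really uses only $p^r+1+A\equiv 4\pmod 8$ with $A=-t$, i.e.\ $|E(\F_{p^r})|\equiv 4\pmod 8$.

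You instead prove the statement directly from that congruence. The $2$-Sylow subgroup of $E(\F_{p^r})$ has order $4$, so $|\calM(p^r;t,\calO)|$ is $0$ or $2$ according as $E[2]\subset E(\F_{p^r})$ or not. This inclusion holds iff $\pi\equiv 1\pmod{2\calO}$. Writing $\pi-1=(u-1)+\frac{f}{c}(c\omega)$ in the basis $\{1,c\omega\}$ of $\calO=\Z+c\calO_K$ reduces the question to the parity of $f/c=2^{a-b}f_0/f_1$; when $f/c$ is even, the norm forces $u-1$ to be even as well.

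The paper's route buys brevity, but leaves the reader to check that Katz's proof really needs only the weaker congruence. Yours is self-contained and shows exactly why only $|E(\F_{p^r})|$ modulo $8$ matters, which is precisely the content of the paper's remark about weakening Katz's hypothesis.

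Two minor points. First, the cross term in $N(\pi-1)$ is $(u-1)m\tr(c\omega)$, not $2(u-1)m\tr(c\omega)$. This is harmless: for $m$ even one already has $N(\pi-1)\equiv (u-1)^2\pmod 2$, and $N(\pi-1)$ is even.

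Second, you do not need the freeness of $E[4]$ over $\calO/4\calO$. That freeness also requires $\calO$ to be the full ring $\End_{\F_{p^r}}(E)$, not merely an order containing $\pi$. The only input you actually use is that $\pi-1$ kills $E[2]$ iff $(\pi-1)/2\in\calO$. This follows directly: an endomorphism killing $E[2]$ factors as $\psi\circ[2]$ through the separable isogeny $[2]$ ($p$ odd). Then $\psi=(\pi-1)/2$ commutes with $\pi$, hence is defined over $\F_{p^r}$.
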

\begin{proof}
In \cite[Lemma 3.6]{Ka2}, a formula is given for $ \left|\calM(p^r;-A,\calO(2^bf_1))\right| $ under the assumption that $ p^r \equiv 1 \pmod{4} $ and $ p^r+1-A \equiv 0 \pmod{8} $ (note that our notation is slightly different). However, what is needed in the proof is only $ p^r+1+A \equiv 4 \pmod{8} $. We assume $ p^r+1-t \equiv 4 \pmod{8} $ and set $ A := -t $.
\end{proof}

\begin{lemma}
\label{lem:K2}
Under the assumptions of \Cref{lem:K1}, if $ t \equiv p^r+1 \pmod{8} $, then
\[
	\left|\calM(p^r;t,\calO(2^bf_1))\right| = \begin{cases}
		2&\text{if $ b = a $,}\\
		4&\text{if $ b = a-1 $,}\\
		12&\text{if $ 0 \leq b \leq a-2 $,}
	\end{cases}.
\]
\end{lemma}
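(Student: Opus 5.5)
The key fact is that the number of $\F_{p^r}$-points of order $4$ on $E$ is determined by the structure of the $2$-power torsion $E(\F_{p^r})[2^\infty]$. That structure is governed by $\Fr$ acting as an element $\pi$ of $\calO = \End_{\F_{p^r}}(E)$. Concretely, $E(\F_{p^r})[n] = E[n]^{\pi}$. Moreover, $E[n] \cong \calO/n\calO$ as an $\calO$-module (valid for ordinary $E$ at primes $n$ prime to $p$). Hence the number of points of order dividing $4$ is
\[
	|\ker(\pi-1 \text{ on } \calO/4\calO)|.
\]
I would follow the method of \cite[Lemma 3.6]{Ka2}: write $\calO = \calO(2^bf_1) = \Z+2^bf_1\calO_K$ and express $\pi$ in a $\Z$-basis of $\calO$. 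Since $\pi = \frac{t+f\sqrt{d_K}}{2}$ (with appropriate conventions), $\pi$ has the form $\pi = x + 2^{a-b}(f_0/f_1)\,\omega_b$, where $\omega_b$ generates $\calO$ over $\Z$ and $x$ is determined by $t$. The odd part $f_0/f_1$ is a unit mod $4$, so only $a-b$ matters.

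\textbf{Step 1.} Compute the matrix of $\pi-1$ on $\calO/4\calO \cong (\Z/4\Z)^2$ in the basis $\{1,\omega_b\}$. The condition $t\equiv p^r+1 \pmod 8$ means $|E(\F_{p^r})| \equiv 0 \pmod 8$, i.e.\ $\det(\pi-1)=p^r+1-t\equiv 0 \pmod 8$. This forces $\pi-1$ to be highly divisible by $2$ once $a-b$ is large.

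\textbf{Step 2.} Split into cases.
\begin{itemize}
\item If $a-b\geq 2$, the off-diagonal part is $\equiv 0 \pmod 4$, and I expect to show $\pi \equiv 1 \pmod{4\calO}$. Then $E[4]\subset E(\F_{p^r})$, so the number of points of order exactly $4$ is $16-4=12$.
\item If $a-b=1$, then $\pi\equiv 1 \pmod{2\calO}$ but not mod $4$. So the full $2$-torsion is rational, and the kernel of $\pi-1$ on $(\Z/4)^2$ has order $8$. This gives $8-4=4$ points of order $4$.
\item If $b=a$, then $\pi-1\not\equiv 0 \pmod 2$. The kernel mod $2$ has order $2$, and since $8\mid\det$, the kernel mod $4$ is cyclic of order $4$. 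This gives $4-2=2$ points of order $4$.
\end{itemize}

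\textbf{Main obstacle.} The hard part is justifying, in each case, the exact order of the kernel of $\pi-1$ modulo $4$ from the congruence $t\equiv p^r+1 \pmod 8$. This means carefully tracking $x \bmod 4$ in terms of $t$ and $d_K$, separately for $d_K$ odd and $d_K$ even. I would handle this by writing $\omega_b = 2^bf_1\omega_K$ with $\omega_K = \frac{d_K+\sqrt{d_K}}{2}$. Then I would check the case $b=a$ using the Smith normal form of $\pi-1$ over $\Z_2$: its entries have $2$-adic valuations $(0,\ge 3)$, which gives a cyclic kernel mod $4$. The remaining cases reduce to $\pi - 1\in 2^{a-b}\calO$ up to a scalar. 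That scalar is $\equiv 0 \pmod{2^{\min(2,a-b)}}$, by the determinant congruence together with the contribution $-2^{2(a-b)}(\ldots)d_K/4$ to the norm.
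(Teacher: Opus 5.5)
Your argument is correct, but it takes a different, self-contained route. The paper does no computation here: it reads the three values off from \cite[Lemma 3.1 and Lemma 3.3]{Ka2}, as in Katz's proof of his Theorem 2.8. You instead derive them from the $\calO$-module structure $E[4]\cong\calO/4\calO$ and the explicit form $\pi=x+2^{a-b}(f_0/f_1)\omega_b$. The norm identity $N(\pi-1)=(x-1)^2+(x-1)fd_K+f^2d_K(d_K-1)/4=p^r+1-t\equiv 0\pmod 8$ does give $x\equiv 1\pmod{2^{\min(2,a-b)}}$. This yields three cases:
\begin{itemize}
\item for $a-b\ge 2$, $\pi-1\in 4\calO$;
\item for $a-b=1$, $\pi-1\in 2\calO\setminus 4\calO$;
\item for $b=a$, the coefficient of $\omega_b$ is a unit, so the $2$-part of $E(\F_{p^r})$ is cyclic of order $\ge 8$.
\end{itemize}
These give $12$, $4$ and $2$ points of order $4$, respectively.

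What your route buys is transparency: it shows exactly where $t\equiv p^r+1\pmod 8$ enters and why the answer differs from \Cref{lem:K1}. The cost is the input $E[4]\cong\calO/4\calO$. This is freeness of $T_2(E)$ over $\calO\otimes\Z_2$, i.e.\ Lenstra's theorem, or Tate's theorem plus invertibility of lattices over quadratic orders, and you should cite it. You can bypass it with the elementary criterion $E[n]\subseteq E(\F_{p^r})\iff(\pi-1)/n\in\calO$ for $p\nmid n$. Combine this with the facts that $E(\F_{p^r})[2^\infty]$ has rank at most $2$ and order divisible by $8$.

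One step should be made explicit. In the case $a-b=1$, knowing $\pi-1\in 2\calO\setminus 4\calO$ only shows that the kernel of $\pi-1$ on $\calO/4\calO$ has order $4$ or $8$. Order $8$ requires $(\pi-1)/2$ to be a non-unit modulo $2\calO$, i.e.\ $8\mid N(\pi-1)$. This is exactly the condition that fails in \Cref{lem:K1}, where this case yields no points of order $4$.
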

\begin{proof}
The formula follows from \cite[Lemma 3.1 and Lemma 3.3]{Ka2} as in the proof of \cite[Theorem 2.8]{Ka2}.
\end{proof}

\begin{proposition}
\label{prop:G}
Let $ p \geq 5 $ be a prime, $ r \in \Z_{\geq 1} $, $ t \in \Z $ such that $ |t|\leq 2p^{\frac{r}{2}} $, and $ D := t^2-4p^r $. If $ p \nmid t $, then
\[
	N^{\Gamma_1(4)}(p^r;t) = \begin{cases}
		H\left(|D|\right)+H\left(\frac{|D|}{4}\right)+4H\left(\frac{|D|}{16}\right)&\text{if $ t \equiv p^r+1 \pmod{8} $,}\\
		H\left(|D|\right)-H\left(\frac{|D|}{4}\right)&\text{if $ t \equiv p^r+5 \pmod{8} $.}
	\end{cases}
\]
\end{proposition}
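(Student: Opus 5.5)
We start from Katz's formula
\[
	2N^{\Gamma_1(4)}(p^r;t) = \sum_{\calO_F \subset \calO \subset \calO_K}\left|\calM(p^r;t,\calO)\right|\cdot h^*(d(\calO)).
\]
The orders between $\calO_F$ and $\calO_K$ are exactly $\calO(2^bf_1)$ with $0\le b\le a$ and $f_1\mid f_0$, where $f=2^af_0$ is the conductor of $\calO_F$. Their discriminants are $d(\calO(2^bf_1)) = 4^bf_1^2\Delta$, with $\Delta=d(\calO_K)$ fundamental. By \Cref{lem:K1} and \Cref{lem:K2}, $|\calM|$ depends only on $a-b$. So the plan is to regroup the sum according to $b$ and recognize each group as a Hurwitz class number.

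The key identity comes from the relation $H(|D|)=\sum_{f^2\mid D,\, D/f^2\equiv 0,1\ (4)} h^*(D/f^2)$ stated in \Cref{subsec:h}. Since the admissible $f$ here are precisely the divisors of the conductor, this gives, for $0\le c\le a$,
\[
	H\left(\frac{|D|}{4^{c}}\right) = \sum_{b=0}^{a-c}\sum_{f_1\mid f_0} h^*(4^bf_1^2\Delta).
\]
Indeed, $D/4^c$ has conductor $2^{a-c}f_0$, and $H(|D|/4^c)=0$ if $c>a$, because then $D/4^c$ is not a discriminant. Writing $G_b:=\sum_{f_1\mid f_0}h^*(4^bf_1^2\Delta)$, this reads $H(|D|/4^c)=\sum_{b\le a-c}G_b$. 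Differencing then gives $G_a = H(|D|)-H(|D|/4)$, $G_{a-1}=H(|D|/4)-H(|D|/16)$, and $\sum_{b\le a-2}G_b=H(|D|/16)$.

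\textbf{Case $t\equiv p^r+5 \pmod 8$.} By \Cref{lem:K1}, $2N=2G_a$, hence $N = H(|D|)-H(|D|/4)$.

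\textbf{Case $t\equiv p^r+1\pmod 8$.} By \Cref{lem:K2},
\[
	2N = 2G_a+4G_{a-1}+12\sum_{b\le a-2}G_b = 2H(|D|)+2H(|D|/4)+8H(|D|/16),
\]
which gives the stated formula after dividing by $2$. If $a$ is small, some of these groups are empty, and the corresponding $H$-values vanish consistently with the convention $H(n)=0$ for non-discriminants. For instance, if $a=0$, then $|D|/4$ is not $\equiv 0,3 \pmod 4$ as a non-discriminant quotient, and so on.

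The main obstacle is the bookkeeping at $p=2$. One must check that the conductor of $D/4^c$ is exactly $2^{a-c}f_0$ and that $H(|D|/4^c)$ vanishes exactly when $c>a$. This holds by the characterization of the conductor as the largest $M$ with $M^2\mid D$ and $D/M^2\equiv 0,1 \pmod 4$. One must also confirm that $t\equiv p^r+1,\,p^r+5 \pmod 8$ forces $t$ even, so $D\equiv 0\pmod 4$ and $a\ge 0$ behaves as required. The remaining identification of the sums with $H$ is routine.
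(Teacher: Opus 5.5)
Your proof is correct and follows the paper's argument. Both start from Katz's class-number formula, use \Cref{lem:K1} and \Cref{lem:K2} to weight the orders $\calO(2^bf_1)$ according to $a-b$, and identify the resulting partial sums of $h^*$ with $H(|D|)$, $H(|D|/4)$ and $H(|D|/16)$ via the characterization of the conductor as the largest admissible $M$. Your telescoping over the groups $G_b$ is a tidier version of the paper's step of subtracting the sum over $g$ with $2g\mid f$, and it also writes out the $t\equiv p^r+1 \pmod 8$ case, which the paper only calls ``similar.''
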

\begin{proof}
Let $ f $ be the conductor of $ \calO_F $. We again write $ f = 2^af_0 $ for $ a \in \Z_{\geq 0} $ and $ f_0 \in \Z_{\geq 1} $ odd.

The orders $ \calO_F \subset \calO \subset \calO_K $ correspond to conductors $ g \mid f $, and in this situation, the discriminant of $ \calO $ equals $ d(\calO) = \frac{g^2}{f^2}D $, where $ D $ is the discriminant of $ \calO_F $. Thus,
\begin{align*}
	2N^{\Gamma_1(4)}(p^r;t)& = \sum_{g \mid f}\left|\calM(p^r;t,\calO(g))\right|\cdot h^*\left(\frac{g^2D}{f^2}\right).
\end{align*}

Consider the case $ t \equiv p^r+5 \pmod{8} $. If $ g \mid f $, then we let $ g = 2^bf_1 $, where $ b \leq a $ and $ f_1 \mid f_0 $. By \Cref{lem:K1}, $ \left|\calM(p^r;t,\calO(2^af_1))\right| = 2 $ and $ \left|\calM(p^r;t,\calO(2^bf_1))\right| = 0 $ if $ b \leq a-1 $, hence
\[
	2N^{\Gamma_1(4)}(p^r;t) = \sum_{f_1 \mid f_0}2h^*\left(\frac{f_1^2D}{f_0^2}\right) = \sum_{g \mid f}2h^*\left(\frac{g^2D}{f^2}\right)-\sum_{2g \mid f}2h^*\left(\frac{g^2D}{f^2}\right).
\]
We recall that $ f $ is the largest integer $ M $ such that $ M^2 \mid D $ and $ \frac{D}{M^2} \equiv 0, 1 \pmod{4} $. It follows that
\begin{align*}
	N^{\Gamma_1(4)}(p^r;t)& = \sum_{\substack{M^2\mid D\\\frac{D}{M^2} \equiv 0,1 \pmod{4}}}h^*\left(\frac{D}{M^2}\right)-\sum_{\substack{M^2\mid D\\\frac{D}{M^2} \equiv 0,1 \pmod{4}\\2\mid M}}h^*\left(\frac{D}{M^2}\right)\\
	& = \sum_{\substack{M^2\mid D\\\frac{D}{M^2} \equiv 0,1 \pmod{4}}}h^*\left(\frac{D}{M^2}\right)-\sum_{\substack{M_1^2\mid \frac{D}{4}\\\frac{D}{4M_1^2} \equiv 0,1 \pmod{4}}}h^*\left(\frac{D}{4M_1^2}\right)
\end{align*}
Using the expression for the Hurwitz class number from \Cref{subsec:h}, we get
\[
	N^{\Gamma_1(4)}(p^r;t) = H(|D|)-H\left(\frac{|D|}{4}\right).
\]
The case $ t \equiv p^r+1 \pmod{8} $ is proved similarly using \Cref{lem:K2}.
\end{proof}

\begin{remark}
\label{rem:G}
We note that if $ p^r \equiv 3 \pmod{4} $ and $ t \equiv p^r+1 \pmod{4} $, then a short calculation shows that $ D $ is not divisible by $ 16 $, hence $ H\left(\frac{|D|}{16}\right) = 0 $.
\end{remark}

\subsection{An equivalent formula for the number of curves}

We eliminate $ H(|D|) $ from the formula using the relations between class numbers in \Cref{lem:delta2}.

\begin{proposition}
\label{prop:N}
Let $ p \geq 5 $ be a prime, $ r \in \Z_{\geq 1} $, $ t \in \Z $ such that $ |t| \leq 2p^{\frac{r}{2}} $, and $ D := t^2-4p^r $. If $ p \nmid t $, then
\[
	N^{\Leg}(p^r;t) = \begin{cases}
		3H\left(\frac{|D|}{4}\right)&\text{if $ t \equiv p^r+1 \pmod{4} $, $ p^r \equiv 3 \pmod{4} $,}\\
		4H\left(\frac{|D|}{4}\right)+2H\left(\frac{|D|}{16}\right)&\text{if $ t \equiv p^r+1 \pmod{8} $, $ p^r \equiv 1 \pmod{4} $,}\\
		2H\left(\frac{|D|}{4}\right)-2H\left(\frac{|D|}{16}\right)&\text{if $ t \equiv p^r+5 \pmod{8} $, $ p^r \equiv 1 \pmod{4} $.}
	\end{cases}
\]
\end{proposition}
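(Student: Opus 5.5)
The plan is to combine \Cref{prop:G} with the identity $N^{\Leg}(p^r;t)=N^{\Gamma_1(4)}(p^r;t)$, which holds because isogenous curves have equal traces. Then I eliminate $H(|D|)$ using \Cref{lem:delta2} with the prime $2$, applied to the discriminant $D/4$ (and, where needed, $D/16$). Note first that $t\equiv p^r+1 \pmod 4$ forces $t$ to be even, so $D/4=(t/2)^2-p^r$ is an integer. Write $D/4=\Delta f^2$ with $\Delta$ fundamental and $f=2^\alpha g$, $g$ odd. \Cref{lem:delta2} then gives
\[
H(|D|)=2H\left(\tfrac{|D|}{4}\right)+\left(1-\leg{\Delta}{2}\right)H(|\Delta|g^2).
\]
What remains is to identify the last term in each congruence case.

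\emph{Case $p^r\equiv 3\pmod 4$.} Here $t\equiv 0\pmod 4$, so $D/4\equiv -p^r\equiv 1\pmod 4$. Hence $D/4$ is odd, so $\alpha=0$ and $H(|\Delta|g^2)=H(|D|/4)$. Since $g^2\equiv 1\pmod 8$, we have $\Delta\equiv D/4\pmod 8$. Now $(t/2)^2\equiv 0\pmod 8$ when $t\equiv 0\pmod 8$ and $\equiv 4\pmod 8$ otherwise. From this I will check the following: if $t\equiv p^r+1\pmod 8$ then $\Delta\equiv 1\pmod 8$, so $\leg{\Delta}{2}=1$ and $H(|D|)=2H(|D|/4)$; if $t\equiv p^r+5\pmod 8$ then $\Delta\equiv 5\pmod 8$, so $\leg{\Delta}{2}=-1$ and $H(|D|)=4H(|D|/4)$. \Cref{rem:G} gives $H(|D|/16)=0$. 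Inserting these into the two lines of \Cref{prop:G} gives $N=2H+H=3H(|D|/4)$ and $N=4H-H=3H(|D|/4)$ respectively, as claimed.

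\emph{Case $p^r\equiv 1\pmod 4$.} Here $t\equiv 2\pmod 4$, so $(t/2)^2\equiv1\pmod 8$ and $D/4\equiv 1-p^r\equiv 0\pmod 4$. The key claim is the uniform identity
\[
H(|D|)=3H\left(\tfrac{|D|}{4}\right)-2H\left(\tfrac{|D|}{16}\right).
\]
\begin{itemize}
\item If $D/16\equiv 0,1\pmod 4$, then $D/16$ is a discriminant with the same fundamental discriminant $\Delta$ and the same odd part $g$ of its conductor. Applying \Cref{lem:delta2} to $D/16$ gives $H(|D|/4)=2H(|D|/16)+(1-\leg{\Delta}{2})H(|\Delta|g^2)$, with the same error term as before. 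Subtracting the two instances of the lemma yields the identity.
\item If $D/16\equiv 2,3\pmod 4$, then $H(|D|/16)=0$. Moreover $D/4=4m$ with $m\equiv 2,3\pmod 4$, so $\Delta$ is even, $\alpha=0$, $\leg{\Delta}{2}=0$, and $H(|\Delta|g^2)=H(|D|/4)$. The identity again reads $H(|D|)=3H(|D|/4)$.
\end{itemize}
Substituting the identity into \Cref{prop:G} then gives $4H(|D|/4)+2H(|D|/16)$ in the case $t\equiv p^r+1\pmod 8$, and $2H(|D|/4)-2H(|D|/16)$ in the case $t\equiv p^r+5\pmod 8$.

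The main obstacle is the bookkeeping in the second case. One must verify that passing from $D/4$ to $D/16$ changes only the $2$-part of the conductor, so that $\Delta$ and $g$ in \Cref{lem:delta2} agree for both applications. One must also handle the boundary case where $D/16$ is not a discriminant. The mod-$8$ computation of $\leg{\Delta}{2}$ in the first case is routine but needs care with the sign conventions of the two congruence classes.
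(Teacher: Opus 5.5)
Your proposal is correct and follows the paper's proof. You pass to $N^{\Gamma_1(4)}$ and use \Cref{prop:G}, then apply \Cref{lem:delta2} at the prime $2$ to obtain $H(|D|)=2H(|D|/4)$ or $4H(|D|/4)$ when $p^r\equiv 3\pmod 4$, and $H(|D|)=3H(|D|/4)-2H(|D|/16)$ when $p^r\equiv 1\pmod 4$ (including the case where $D/16$ is not a discriminant); these are exactly the two claims the paper states and leaves as straightforward.
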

\begin{proof}
First, let $ p^r \equiv 3 \pmod{4} $. As we observed in \Cref{rem:G}, if $ t \equiv p^r+1 \pmod{4} $, then $ H\left(\frac{|D|}{16}\right) = 0 $. We claim that
\[
	H(4p^r-t^2) = \begin{cases}
		2H\left(\frac{4p^r-t^2}{4}\right)&\text{if $ t \equiv p^r+1 \pmod{8} $,}\\
		4H\left(\frac{4p^r-t^2}{4}\right)&\text{if $ t \equiv p^r+5 \pmod{8} $.}
	\end{cases}
\]
It is straightforward to prove this using \Cref{lem:delta2}.
%%%%%%%%%%%%%%%%%%%%%%%%%%%%%%%%
%To prove the claim, let $ D_1 := \frac{t^2-4p^r}{4} $ and $ a \in \{0,4\} $ be such that $ t \equiv p^r+1+a \pmod{8} $. A direct computation shows $ D_1 \equiv 1+\frac{a^2}{4} \pmod{8} $.
%In fact, if we write $ t = 8k+p^r+1+a $ for some $ k \in \Z $, then
%\[
%	t^2-4p^r = 64k^2+16k(p^r+1+a)+(p^r+1+a)^2-4p^r \equiv (p^r-1)^2+2(p^r+1)a+a^2 \pmod{32}.
%\]
%Since $ p^r = 4\ell+3 $ for some $ \ell \in \Z_{\geq 0} $, we have
%\[
%	D_1 = \frac{t^2-4p^r}{4} \equiv 4\ell^2+4\ell+1+2(\ell+1)a+\frac{a^2}{4} \equiv 1+\frac{a^2}{4} \pmod{8}.
%\]
%In particular, $ D_1 $ is odd for both $ a = 0 $ and $ a = 4 $. Let $ D_1 = \Delta f^2 $, where $ \Delta < 0 $ is a fundamental discriminant and $ f \in \Z_{\geq 1} $. If we write $ f = 2^{\alpha}g $ with $ \alpha \in \Z_{\geq 0} $ and $ g \in \Z_{\geq 1} $ odd, then $ \alpha = 0 $. Thus, $ \Delta g^2 = D_1 $ and by \Cref{lem:delta2},
%\[
%	H(4|D_1|)=2H(|D_1|)+\left(1-\leg{\Delta}{2}\right)H(|D_1|).
%\]
%If $ a = 0 $, then $ \Delta \equiv D_1 \equiv 1 \pmod{8} $, hence $ \leg{\Delta}{2} = 1 $ and $ H(4|D_1|) = 2H(|D_1|) $. If $ a = 4 $, then $ \Delta \equiv D_1 \equiv 5 \pmod{8} $, hence $ \leg{\Delta}{2} = -1 $ and $ H(4|D_1|) = 4H(|D_1|) $. This proves the claim.
%%%%%%%%%%%%%%%%%%%%%%%%%%%%%%%%

Secondly, let $ p^r \equiv 1 \pmod{4} $. We claim that if $ t \equiv p^r+1 \pmod{4} $, then
\[
	H(4p^r-t^2) = 3H\left(\frac{4p^r-t^2}{4}\right)-2H\left(\frac{4p^r-t^2}{16}\right).
\]
This is again not difficult to prove using \Cref{lem:delta2}.
%%%%%%%%%%%%%%%%%%%%%%%%%%%%%%%%%
%We note that $ \frac{4p^r-t^2}{16} $ is an integer. If we write $ t = 4k+p^r+1 $ for some $ k \in \Z $, then
%\[
%	t^2-4p^r = 16k^2+8k(p^r+1)+(p^r-1)^2,
%\]
%and since $ p^r \equiv 1 \pmod{4} $, this is divisible by $ 16 $.
%
%Now we distinguish two cases: either $ \frac{t^2-4p^r}{16} $ is a discriminant or not. If it is a discriminant, then set $ D_1 := \frac{t^2-4p^r}{16} $ and let $ D_1 = \Delta f_1^2 $ as before. If we write $ f_1 = 2^{\alpha}g $ for $ \alpha \in \Z_{\geq 0} $ and $ g \in \Z_{\geq 1} $ odd, then $ 4D_1 = \Delta f_2^2 $, where $ f_2 = 2^{\alpha+1}g $. By \Cref{lem:delta2},
%\begin{align*}
%	H(4|D_1|)& = 2H(|D_1|)+\left(1+\leg{\Delta}{2}\right)H(|\Delta|g^2),\\
%	H(16|D_1|)& = 2H(4|D_1|)+\left(1+\leg{\Delta}{2}\right)H(|\Delta|g^2).
%\end{align*}
%From these two equations, it follows that $ H(16|D_1|) = 3H(4|D_1|)-2H(|D_1|) $.
%
%If $ \frac{t^2-4p^r}{16} $ is not a discriminant, then set $ D_1 := \frac{t^2-4p^r}{4} $ and again let $ D_1 = \Delta f^2 $ and $ f = 2^\alpha g $. Since $ D_1 $ is divisible by $ 4 $ but $ \frac{D_1}{4} $ is not a discriminant, $ f $ is odd and $ 4 \mid \Delta $. Hence, $ \alpha = 0 $, $ g = f $, and by \Cref{lem:delta2},
%\[
%	H(4|D_1|) = 2H(|D_1|)+\left(1+\leg{\Delta}{2}\right)H(|D_1|) = 3H(|D_1|) = 3H(|D_1|)-2H\left(\frac{|D_1|}{4}\right).
%\]
%This proves the claim.
%%%%%%%%%%%%%%%%%%%%%%%%%%%%%%%%%%%
The conclusion follows from \Cref{prop:G}.
\end{proof}

In the proposition below and elsewhere, $ \delta_{\calS} := 1 $ if the statement $ \calS $ is true and $ \delta_{\calS} := 0 $ otherwise.

\begin{proposition}[{\cite[Proposition 2.11]{OSS}}]
\label{prop:OSS}
Let $ p \geq 5 $ be a prime, $ r \in \Z_{\geq 1} $, and $ k \in \Z_{\geq 1} $.
\begin{enumerate}
\item If $ k $ is even, then
\[
	S_k^{\Leg}(p^r) = 2+\delta_{2\mid r}2^{k-1}p^{\frac{rk}{2}}(p-1)+3\sum_{\substack{p \nmid t\\t \equiv p^r+1 \pmod{4}}}H\left(\frac{4p^r-t^2}{4}\right)t^k.
\]
\item If $ k $ is odd and $ p^r \equiv 3 \pmod{4} $, then $ S_k^{\Leg}(p^r) = 0 $.
\item If $ k $ is odd and $ p^r \equiv 1 \pmod{4} $, then
\begin{align*}
	S_k^{\Leg}(p^r) =& 2+\delta_{2\mid r}\varepsilon(p^r)2^{k-1}p^{\frac{rk}{2}}(p-1)+2\sum_{\substack{p \nmid t\\t \equiv p^r+1 \pmod{8}}}H\left(\frac{4p^r-t^2}{4}\right)t^k\\
	&+4\sum_{\substack{p \nmid t\\t \equiv p^r+1 \pmod{8}}}H\left(\frac{4p^r-t^2}{16}\right)t^k.
\end{align*}
\end{enumerate}
\end{proposition}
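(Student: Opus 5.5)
The plan is to split $S_k^{\Leg}(p^r)=\sum_{\lambda\in\F_{p^r}}\tr(E_\lambda^{\Leg})^k$ into three pieces: the two singular fibres $\lambda\in\{0,1\}$, the supersingular fibres, and the ordinary fibres. Each piece is then evaluated with the results already established.

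\emph{Singular fibres.} For $\lambda=0$ the curve is $y^2=x^2(x-1)$, a nodal cubic with node at $(0,0)$ and tangent cone $y^2=-x^2$. For $\lambda=1$ the curve is $y^2=x(x-1)^2$, with node at $(1,0)$ and tangent cone $y^2=(x-1)^2$. A split node gives $p^r$ projective points, so trace $1$; a non-split node gives $p^r+2$ points, so trace $-1$. Hence $\lambda=1$ always has trace $1$. For $\lambda=0$ the trace is $1$ if $-1$ is a square in $\F_{p^r}$ (i.e.\ $p^r\equiv 1\pmod 4$) and $-1$ otherwise. These two fibres therefore contribute $2$ when $k$ is even. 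When $k$ is odd they contribute $2$ if $p^r\equiv1\pmod4$ and $0$ if $p^r\equiv3\pmod4$.

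\emph{Supersingular fibres.} These are exactly the $t$ with $p\mid t$, handled by \Cref{prop:SS}. For $r$ odd the only trace is $t=0$, which contributes nothing since $k\ge1$. For $r$ even there are $\frac12(p-1)$ values of $\lambda$, each with trace $\varepsilon(p^r)2p^{r/2}$. They contribute $\varepsilon(p^r)^k2^{k-1}p^{rk/2}(p-1)$, which accounts for the $\delta_{2\mid r}$ terms (with $\varepsilon(p^r)^k=1$ for $k$ even).

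\emph{Ordinary fibres.} Since the $\Gamma_1(4)$-family is isogenous to $\Leg$ and every $E^{\Gamma_1(4)}_\lambda$ has a rational point of order $4$, we get $4\mid p^r+1-t$. So $N^{\Leg}(p^r;t)=0$ unless $t\equiv p^r+1\pmod 4$. The Hasse bound is automatic, because $H$ vanishes at negative arguments. The ordinary contribution is then $\sum_{p\nmid t}N^{\Leg}(p^r;t)t^k$, with $N^{\Leg}$ given by \Cref{prop:N}.

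\emph{Case $p^r\equiv3\pmod4$.} Here $N^{\Leg}(p^r;t)=3H(\frac{4p^r-t^2}{4})$ on $t\equiv 0\pmod 4$. This gives (1) directly for $k$ even. For $k$ odd the set $\{t\equiv0 \pmod 4,\ p\nmid t\}$ is symmetric under $t\mapsto -t$ and $H(\frac{4p^r-t^2}{4})$ is even in $t$. So the sum vanishes, and together with the zero singular contribution this gives (2).

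\emph{Case $p^r\equiv1\pmod4$.} Then $p^r+1\equiv2\pmod4$, and the key observation is that $t\mapsto -t$ swaps the classes $A=\{t\equiv p^r+1 \pmod 8\}$ and $B=\{t\equiv p^r+5\pmod 8\}$, because $2\leftrightarrow6 \pmod 8$. It also preserves $p\nmid t$ and the values $H(\frac{4p^r-t^2}{4})$ and $H(\frac{4p^r-t^2}{16})$. Reflecting the sum over $B$ onto $A$ multiplies it by $(-1)^k$.
\begin{itemize}
\item For $k$ even, the total weight on $A$ is $(4+2)H(\cdot/4)+(2-2)H(\cdot/16)=6H(\cdot/4)$. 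Spreading this back over both classes gives $3\sum_{t\equiv p^r+1 \pmod 4}H(\cdot/4)t^k$, which is (1).
\item For $k$ odd, the total weight on $A$ is $(4-2)H(\cdot/4)+(2+2)H(\cdot/16)$, which together with the singular and supersingular pieces gives (3).
\end{itemize}

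The main obstacle is the bookkeeping for the singular fibres: deciding split versus non-split nodes and checking that the stated constant $2$ is correct in every parity case. Everything else is a direct reorganisation of \Cref{prop:SS} and \Cref{prop:N} via the reflection $t\mapsto -t$.
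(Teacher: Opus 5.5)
Your proposal is correct and follows the paper's proof: the singular fibres contribute $1+\leg{-1}{p^r}^k$, \Cref{prop:SS} supplies the supersingular term, and \Cref{prop:N}, combined with the reflection $t\mapsto -t$ that swaps the classes $p^r+1$ and $p^r+5 \pmod 8$, gives the ordinary sums. Two points are minor refinements of the same argument rather than a different route: you get the singular traces from split versus non-split nodes instead of direct point counting, and you state explicitly why $N^{\Leg}(p^r;t)=0$ when $t\not\equiv p^r+1 \pmod 4$.
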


\begin{remark}
\label{rem:OSS}
If $ p^r \equiv 1 \pmod{4} $ and $ t \equiv p^r+9 \pmod{16} $, then $ 16 \mid (4p^r-t^2) $ and a short calculation shows that
\[
	\frac{t^2-4p^r}{16} \equiv 2, 3 \pmod{4},
\]
hence $ \frac{t^2-4p^r}{16} $ is not a discriminant and $ H\left(\frac{4p^r-t^2}{16}\right) = 0 $. Thus, in \Cref{prop:OSS} (3), the condition $ t \equiv p^r+1 \pmod{8} $ in the second sum can be replaced by $ t \equiv p^r+1 \pmod{16} $ as in \cite[Proposition 2.11]{OSS}.
\end{remark}

\begin{proof}[Proof of \Cref{prop:OSS}]
It is straightforward to show by elementary counting that we have $ \tr\left(E^{\Leg}_0\right) = 1 $ and $ \tr\left(E^{\Leg}_1\right) = \leg{-1}{p^r} $ for the singular curves.
%%%%%%%%%%%%%%%%%%%%%%%%%%%%%%%%%%%%%%
%First, we count the number of solutions to $ y^2 = x^2(x-1) $. One solution is $ (0,0) $. If $ x \in \F_{p^r}\setminus\{0\} $, then the equation is equivalent to
%\[
%	\left(\frac{y}{x}\right)^2 = x-1
%\]
%and the number of solutions is $ 1+\phi_{p^r}(x-1) $, where $ \phi_{p^r} $ is the quadratic character of $ \F_{p^r} $. Thus,
%\[
%	\left|\left\{(x,y)\in \F_{p^r}^2:\; y^2 = x^2(x-1)\right\}\right| = 1+\sum_{x \in \F_{p^r}\setminus\{0\}}\left(1+\phi_{p^r}(x-1))\right) = p^r-\phi_{p^r}(-1).
%\]
%Counting the point at infinity, we have $ \left|E^{\Leg}_0(\F_{p^r})\right| = p^r+1-\phi_{p^r}(-1) $, hence
%\[
%	\tr\left(E^{\Leg}_0\right) = \phi_{p^r}(-1) = \leg{-1}{p^r}.
%\]
%
%Secondly, we count the number of solutions to $ y^2 = x(x-1)^2 $. One solution is $ (1,0) $. If $ x \in \F_{p^r}\setminus\{1\} $, then the equation is equivalent to
%\[
%	\left(\frac{y}{x-1}\right)^2 = x
%\]
%and the number of solution is $ 1+\phi_{p^r}(x) $, hence
%\[
%	\left|\left\{(x,y)\in \F_{p^r}^2:\; y^2 = x(x-1)^2\right\}\right| = 1+\sum_{x \in \F_{p^r}\setminus\{1\}}(1+\phi_{p^r}(x)) = p^r-1.
%\]
%Counting the point at infinity, we have $ \left|E^{\Leg}_1(\F_{p^r})\right| = p^r $, hence
%\[
%	\tr\left(E^{\Leg}_1\right) = 1.
%\]
%%%%%%%%%%%%%%%%%%%%%%%%%%%%%%%%%%%%%
Thus,
\[
	S_k^{\Leg}(p^r) = \sum_{\lambda \in \F_{p^r}}\tr\left(E^{\Leg}_{\lambda}\right)^k = 1+\leg{-1}{p^r}^k+\sum_{\lambda \in \F_{p^r}\setminus\{0,1\}}\tr\left(E^{\Leg}_{\lambda}\right)^k.
\]
The number of supersingular curves was evaluated in \Cref{prop:SS}, which yields
\[
	S_k^{\Leg}(p^r) = 1+\leg{-1}{p^r}^k+\delta_{2\mid r}\frac{1}{2}(p-1)\left(\varepsilon(p^r)2p^{\frac{r}{2}}\right)^k+\sum_{p \nmid t}N^{\Leg}(p^r;t)t^k.
\]
The rest now follows from \Cref{prop:N}.
%%%%%%%%%%%%%%%%%%%%%%%%%%%%%%%%%%%%%
%Let us prove for example the case of $ k $ even. If $ p^r \equiv 3 \pmod{4} $, then
%\[
%	\sum_{p \nmid t}N^\Leg(p^r;t)t^k = \sum_{\substack{p \nmid t\\t \equiv p^r+1 \pmod{4}}}3H\left(\frac{4p^r-t^2}{4}\right)t^k.
%\]
%If $ p^r \equiv 1 \pmod{4} $, then $ t \equiv p^r+1 \pmod{8} $ if and only if $ -t \equiv p^r+5 \pmod{8} $, hence
%\begin{align*}
%	& \sum_{p \nmid t}N^\Leg(p^r;t)t^k = \sum_{\substack{p \nmid t\\t \equiv p^r+1 \pmod{8}}}\left(4H\left(\frac{4p^r-t^2}{4}\right)+2H\left(\frac{4p^r-t^2}{16}\right)\right)t^k\\
%	& +\sum_{\substack{p \nmid t\\t \equiv p^r+5 \pmod{8}}}\left(2H\left(\frac{4p^r-t^2}{4}\right)-2H\left(\frac{4p^r-t^2}{16}\right)\right)t^k = \sum_{\substack{p \nmid t\\t \equiv p^r+1 \pmod{8}}}6H\left(\frac{4p^r-t^2}{4}\right)t^k\\
%	& = \sum_{\substack{p \nmid t\\t \equiv p^r+1 \pmod{4}}}3H\left(\frac{4p^r-t^2}{4}\right)t^k.
%\end{align*}
%The case of $ k $ odd is similar.
%%%%%%%%%%%%%%%%%%%%%%%%%%%%%%%%%%%%
\end{proof}

\section{Holomorphic projection}
\label{sec:hol}

\subsection{Definition and properties}

Gross and Zagier \cite[p. 288, Proposition 5.1]{GZ}) defined holomorphic projection. The definition given here is stated under the same assumptions as in \cite[Section 2.3]{BKP}.

Let $ F(\tau) = \sum_{n \in \Z}c_{F,v}(n)q^n $ be a (not necessarily holomorphic) function on $ \H $ such that the series converges absolutely for every $ \tau \in \H $. Assume that
\begin{enumerate}
\item $ F $ is translation-invariant, i.e., $ F(\tau+1) = F(\tau) $,
\item $ c_{F,v}(n) = O_{F,n}\left(v^{2-\kappa}\right) $ as $ v \to 0^+ $ for $ n \in \Z_{\geq 1} $,
\item there exists $ c_F(0) \in \C $ such that
\[
	F(\tau) = c_F(0)+O\left(v^{-\delta}\right),\qquad \tau \to i\infty
\]
for some $ \delta>0 $, and a similar growth condition holds as $ \tau \to \rho $ for every cusp $ \rho \in \Q $.
\end{enumerate}
Then the \emph{holomorphic projection} of $ F $ is defined by
\[
	(\pihol(F))(\tau) := c_F(0)+\sum_{n=1}^\infty c(n)q^n,
\]
where
\[
	c(n) := \frac{(4\pi n)^{\kappa-1}}{\Gamma(\kappa-1)}\int_0^{\infty} c_{F,v}(n)e^{-4\pi n v}v^{\kappa-2}\dif v.
\]

If $ F $ satisfies weight $ \kappa $ modularity on a congruence group $ \Gamma $ and $ \langle F,g \rangle $ is defined for every cusp form $ g $ of weight $ \kappa $ on $ \Gamma $, where $ \langle \cdot, \cdot \rangle $ denotes the \emph{Petersson inner product}, then there exists a unique cusp form $ f $ such that $ \langle F, g \rangle = \langle f, g \rangle $ for every cusp form $ g $. This observation is due to Sturm \cite{Stu}, who originally defined $ \pihol(F) = f $.

\begin{definition}
\label{def:RC}
For two functions $ F_1, F_2 $ and $ \kappa_1, \kappa_2 \in \R $, define for $ k \in \Z_{\geq 0} $ the $ k $-th \emph{Rankin-Cohen bracket} by
\[
	[F_1, F_2]_k := \frac{1}{(2\pi i)^k}\sum_{j=0}^k (-1)^j\binom{\kappa_1+k-1}{k-j}\binom{\kappa_2+k-1}{j}\frac{\partial^j F_1(\tau)}{\partial \tau^j}\frac{\partial^{k-j} F_2(\tau)}{\partial \tau^{k-j}}
\]
with $ \binom{\alpha}{j} := \frac{\Gamma(\alpha+1)}{j!\Gamma(\alpha-j+1)} $, provided that the right-hand side is well defined. If $ F_1, F_2 $ transform like modular forms of weight $ \kappa_1 $ and $ \kappa_2 $, respectively, then $ [F_1, F_2]_k $ transforms like a modular form of weight $ \kappa_1+\kappa_2+2k $.
\end{definition}

A harmonic Maass form of weight $ \kappa \neq 1 $ has a decomposition \cite[Lemma 7.2]{On2}
\begin{equation}
\label{eq:Fdec}
	F(\tau) = F^{+}(\tau)+F^{-}(\tau),
\end{equation}
where
\begin{align*}
	F^+(\tau)& = \sum_{n=n_1}^\infty c_F^{+}(n)q^n,\\
	F^-(\tau)& = c_F^-(0)v^{1-\kappa}+\sum_{\substack{n=-\infty\\n \neq 0}}^{n_2} c_F^{-}(n)\Gamma(1-\kappa,-4\pi n v)q^n
\end{align*}
for some $ n_1, n_2 \in \Z $. Here
\[
	\Gamma(\alpha, x) := \int_x^\infty t^{\alpha-1}e^{-t}\dif t
\]
is the \emph{incomplete gamma function}. We also let
\[
	F_0^-(\tau) := F^-(\tau)-c_F^-(0)v^{1-\kappa}.
\]

Properties of the holomorphic projection are summarized in the next lemma.

\begin{lemma}[{\cite[Proposition 4.2]{Me2}, \cite[Lemma 2.6]{KP1}}]
\label{lem:holProp}
Suppose that $ F $ satisfies the conditions from the definition of holomorphic projection and weight $ \kappa \geq 2 $ modularity on $ \Gamma_1(N) $.
\begin{enumerate}
\item If $ F $ is holomorphic, then $ \pihol(F) = F $.
\item If $ \kappa > 2 $, then $ \pihol(F) \in M_k(\Gamma) $. If $ \kappa = 2 $, then $ \pihol(F) $ is a quasimodular form of weight $ 2 $.
\item If $ F_1 $ is a weight $ \kappa_1 \in \frac{1}{2}\Z_{\geq 1} $ harmonic Maass form and $ f_2 $ is a weight $ \kappa_2 \in \frac{1}{2}\Z_{\geq 1} $ holomorphic modular form, and $ k \in \Z_{\geq 0} $ with $ \kappa := \kappa_1+\kappa_2+2k \geq 2 $, then
\[
	\pihol([F_1, f_2]_k) = [F_1^{+}, f_2]_k+\pihol([F_1^{-}, f_2]_k).
\]
\end{enumerate}
\end{lemma}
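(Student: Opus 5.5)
The plan is to work directly from the coefficient-wise definition of $ \pihol $. Part (1) is a direct computation, and part (3) is linearity plus part (1). Part (2) needs Sturm's characterization via Poincaré series.

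\emph{Part (1).} If $ F $ is holomorphic, then $ c_{F,v}(n) = c_F(n) $ does not depend on $ v $. The defining integral becomes
\[
c(n) = c_F(n)\frac{(4\pi n)^{\kappa-1}}{\Gamma(\kappa-1)}\int_0^\infty e^{-4\pi n v}v^{\kappa-2}\dif v = c_F(n),
\]
by the Gamma integral $ \int_0^\infty e^{-4\pi n v}v^{\kappa-2}\dif v = \Gamma(\kappa-1)(4\pi n)^{1-\kappa} $, valid since $ \kappa>1 $. The constant term is kept by definition, so $ \pihol(F)=F $.

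\emph{Part (2).} For $ n\geq 1 $ I would take the weight $ \kappa $ Poincaré series $ P_n $ on $ \Gamma_1(N) $ and compute $ \langle F,P_n\rangle $ by the unfolding trick. Unfolding against the cusp $ i\infty $ yields, up to the standard normalizing constant, exactly $ \int_0^\infty c_{F,v}(n)e^{-4\pi n v}v^{\kappa-2}\dif v $. The growth hypotheses (ii) and (iii) guarantee absolute convergence when $ \kappa>2 $; the same has to be done at every cusp by conjugating $ F $ there, which is routine. Since the $ P_n $ span $ S_\kappa(\Gamma_1(N)) $ and the normalization in the definition of $ c(n) $ matches the $ n $-th Poincaré coefficient formula, the series $ \sum_{n\ge1}c(n)q^n $ is the cusp form $ f $ with $ \langle F,g\rangle=\langle f,g\rangle $ for all cusp forms $ g $. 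The constant terms of $ F $ at the cusps are then matched by adding a suitable combination of weight $ \kappa $ Eisenstein series. Subtracting it leaves a function decaying at all cusps, so the pairing argument applies and $ \pihol(F)\in M_\kappa(\Gamma_1(N)) $.

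For $ \kappa=2 $ the Poincaré series no longer converge absolutely. I would instead use Hecke's trick: insert $ v^s $, or equivalently a factor $ |c\tau+d|^{-2s} $ in the Poincaré series, and let $ s\to0 $. The only non-holomorphic contribution that survives is a multiple of $ \frac{1}{v} $ coming from the Eisenstein part, i.e.\ from $ \widehat{E}_2 $. Dropping it leaves a quasimodular form of weight $ 2 $. I expect this weight-$2$ regularization, together with checking the boundary terms at every cusp, to be the main obstacle; I would follow the argument of \cite[Proposition 4.2]{Me2} closely here.

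\emph{Part (3).} By the decomposition \eqref{eq:Fdec} and bilinearity of the bracket, $ [F_1,f_2]_k=[F_1^+,f_2]_k+[F_1^-,f_2]_k $. Holomorphic projection is linear at the level of the coefficient integrals, so it suffices to check two things. First, each summand separately satisfies the growth conditions (ii) and (iii) in the definition: $ F_1^- $ decays exponentially apart from the term $ c_{F_1}^-(0)v^{1-\kappa_1} $, which is polynomial in $ v $, and $ F_1^+ $ grows at most polynomially at the cusps. Second, part (1) applies to $ [F_1^+,f_2]_k $ coefficient-wise. Part (1) only used that the coefficients are independent of $ v $, not modularity, so it does apply even though $ [F_1^+,f_2]_k $ is not modular. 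The delicate point is the constant term: the constant term $ c_F(0) $ of the whole bracket has to be split consistently between the two pieces, which I would handle by tracking the $ v^{1-\kappa_1} $ term explicitly.
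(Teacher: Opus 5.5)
The paper gives no proof of this lemma beyond the citations \cite[Proposition 4.2]{Me2} and \cite[Lemma 2.6]{KP1}, and your outline is the standard argument behind those results, so it is correct and essentially the same approach: the Gamma integral for (1); for (2), Sturm's unfolding against Poincar\'e series after subtracting Eisenstein series, with Hecke's trick and $\widehat{E}_2$ in weight $2$; and for (3), bilinearity together with the $v$-independent computation from (1). The one inaccuracy is in (3): you claim $F_1^{+}$ grows at most polynomially at the cusps, but this is not implied by the hypotheses (it fails if $F_1$ has a principal part), and growth of the non-modular pieces at cusps other than $i\infty$ is not needed; it suffices that for each $n\geq 1$ the coefficient integral for $[F_1^{+},f_2]_k$ converges trivially, so the one for $[F_1^{-},f_2]_k$ converges by subtraction, and that the $q^0$-coefficients split accordingly.
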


\subsection{The non-holomorphic part}

For $ a \in \Z_{\geq 1} $ and $ b \in \R $, let
\[
	P_{a,b}(X,Y) := \sum_{j=0}^{a-2}\binom{j+b-2}{j}X^j(X+Y)^{a-j-2}.
\]
Moreover, for $ \kappa_1, \kappa_2 \in \R\setminus \Z $ and $ k \in \Z_{\geq 0} $ with $ 2k-2 \geq \kappa_1+\kappa_2 \in \Z $, let
\begin{multline*}
	\alpha_{\kappa_1,\kappa_2,k} := \frac{1}{(\kappa_1+\kappa_2+2k-2)!(\kappa_1-1)}\\\cdot\sum_{\mu=0}^k\frac{\Gamma(2-\kappa_1)\Gamma(\kappa_2+2k-\mu)}{\Gamma(2-\kappa_1-\mu)}\binom{\kappa_1+k-1}{k-\mu}\binom{\kappa_2+k-1}{\mu}.
\end{multline*}

In \cite[Lemma 4.4, Theorem 4.6]{Me2}, Mertens explicitly worked out $ \pihol([F_1^{-}, f_2]_k) $. His results were summarized in \cite[Lemma 2.1]{BKP}, which we restate here.

\begin{lemma}[{\cite[Lemma 2.1]{BKP}}]
\label{lem:holGen}
Let $ \kappa_1, \kappa_2 \in \Z+\frac{1}{2} $, $ \kappa_1, \kappa_2 \geq 0 $, and $ k \in \Z_{\geq 0} $ with $ 2k-2 \geq \kappa_1+\kappa_2 $. Suppose that $ F $ satisfies modularity of weight $ \kappa_1 $, has Fourier expansion of the type \eqref{eq:Fdec}, and grows at most polynomially towards the cusps and that $ g $ is a holomorphic modular form of weight $ \kappa_2 $.
\begin{enumerate}
\item We have
\[
	\frac{(4\pi)^{1-\kappa_1}}{\kappa_1-1}\pihol\left([v^{1-\kappa_1},g]_k\right) = \alpha_{\kappa_1,\kappa_2,k}\sum_{n=0}^{\infty}n^{\kappa_1+k-1}c_g(n)q^n.
\]
\item We have
\[
	\pihol\left([F_0^{-},g]_k\right)(\tau) = \sum_{n=1}^{\infty}b(n)q^n,
\]
where
\begin{multline*}
	b(n) = -\Gamma(1-\kappa_1)\sum_{\substack{j \geq 1, \ell \geq 1\\j-\ell=n}}\sum_{\mu=0}^k\binom{\kappa_1+k-1}{k-\mu}\binom{\kappa_2+k-1}{\mu}j^{k-\mu}c_g(j)\frac{c_F^{-}(-\ell)}{\ell^{\kappa_1-1}}\\
	\cdot \left(j^{\mu-2k-\kappa_2+1}P_{\kappa_1+\kappa_2+2k,2-\kappa_1-\mu}(n,\ell)-\ell^{\kappa_1+\mu-1}\right).
\end{multline*}
\end{enumerate}
\end{lemma}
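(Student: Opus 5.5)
The plan is to compute both parts directly from the coefficient formula defining $\pihol$. For each $n \geq 1$, the Fourier coefficient $c_{F,v}(n)$ of the relevant Rankin--Cohen bracket is an explicit function of $v$, and we evaluate
\[
	c(n) = \frac{(4\pi n)^{\kappa-1}}{\Gamma(\kappa-1)}\int_0^\infty c_{F,v}(n)e^{-4\pi nv}v^{\kappa-2}\dif v,
\]
where $\kappa=\kappa_1+\kappa_2+2k$, which is an integer. The hypothesis $2k-2\geq\kappa_1+\kappa_2$ gives $\kappa-2 \geq 2k$ plus something positive, which is what makes all the integrals below converge at $v\to 0^+$. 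Condition (3) for the bracket follows from the polynomial growth of $F$ and $g$. We first expand $[\cdot,g]_k$ as in \Cref{def:RC}, with $\mu$ the number of derivatives falling on the first argument. The factor $\partial_\tau^{k-\mu}g$ contributes $(2\pi i j)^{k-\mu}c_g(j)q^j$.

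For part (1), we use $\partial_\tau v^a = \frac{a}{2i}v^{a-1}$, which gives
\[
	\partial_\tau^\mu v^{1-\kappa_1} = (2i)^{-\mu}\frac{\Gamma(2-\kappa_1)}{\Gamma(2-\kappa_1-\mu)}v^{1-\kappa_1-\mu}.
\]
The $n$-th coefficient is then a finite sum of monomials in $v$, and each integral is a Gamma integral:
\[
	\int_0^\infty v^{\kappa_2+2k-\mu-1}e^{-4\pi nv}\dif v = \frac{\Gamma(\kappa_2+2k-\mu)}{(4\pi n)^{\kappa_2+2k-\mu}}.
\]
Collecting the powers of $2\pi i$, $2i$, $4\pi$ and $n$ should leave exactly $n^{\kappa_1+k-1}c_g(n)$, multiplied by the $\mu$-sum defining $\alpha_{\kappa_1,\kappa_2,k}$. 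The constant term $n=0$ is handled by the value $c_F(0)$ in the definition, and it is consistent with the formula.

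For part (2), we treat the terms of $F_0^-$ one at a time. The term $c_F^-(-\ell)\Gamma(1-\kappa_1,4\pi\ell v)q^{-\ell}$, multiplied by the $j$-th term of $g$, contributes to the coefficient of $q^n$ with $n=j-\ell$; only $n\geq1$ matters. To differentiate, we use
\[
	\partial_\tau\Gamma(1-\kappa_1,4\pi\ell v)=-\frac{1}{2i}(4\pi\ell)^{1-\kappa_1}v^{-\kappa_1}e^{-4\pi\ell v}.
\]
By the Leibniz rule, $\partial_\tau^\mu\bigl(\Gamma(1-\kappa_1,4\pi\ell v)q^{-\ell}\bigr)$ equals $(-2\pi i\ell)^\mu\Gamma(1-\kappa_1,4\pi\ell v)q^{-\ell}$ plus a finite sum of terms $v^{-\kappa_1-i}e^{-4\pi\ell v}q^{-\ell}$ with explicit coefficients. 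The elementary terms again give Gamma integrals, now with exponent $e^{-4\pi(n+\ell)v}=e^{-4\pi jv}$. For the incomplete-gamma term we write $\Gamma(1-\kappa_1,4\pi\ell v)=\int_v^\infty(4\pi\ell)^{1-\kappa_1}t^{-\kappa_1}e^{-4\pi\ell t}\dif t$ and swap the integrals. The inner integral $\int_0^t v^{\kappa-2}e^{-4\pi nv}\dif v$ has integer exponent $\kappa-2$, so it equals $\frac{(\kappa-2)!}{(4\pi n)^{\kappa-1}}\bigl(1-e^{-4\pi nt}\sum_{i\leq\kappa-2}\frac{(4\pi nt)^i}{i!}\bigr)$. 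Each resulting piece is once more a Gamma integral in $t$. The ``$1$'' piece produces the term $-\ell^{\kappa_1+\mu-1}$, and the truncated exponential sum produces powers $n^i j^{-\cdots}$.

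The remaining step is to combine these with the contributions of the elementary derivative terms and recognize the result as $j^{\mu-2k-\kappa_2+1}P_{\kappa,2-\kappa_1-\mu}(n,\ell)$, using $j=n+\ell$. This is where the binomial coefficients $\binom{i+b-2}{i}$ with $b=2-\kappa_1-\mu$ should appear. I expect this combinatorial identification to be the main obstacle. The convergence issues are routine: exponential decay of the incomplete gamma function together with polynomial growth of $c_g$ justifies exchanging the sum over $\ell$ and the integral. I would first verify the identification for $\mu=0$ and then induct on $\mu$, using the recursion obtained from one extra $\tau$-derivative. If that becomes too heavy, I would fall back on following Mertens' computation \cite[Lemma 4.4, Theorem 4.6]{Me2} and only translating normalizations.
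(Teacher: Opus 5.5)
Your route is the one behind the cited result: the paper gives no proof and defers to Mertens' direct evaluation of the projection integrals \cite[Lemma 4.4, Theorem 4.6]{Me2} via \cite[Lemma 2.1]{BKP}. Part (1) comes out exactly as you describe.

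\textbf{The step that fails as written.} In part (2), after swapping the integrals you split $\int_0^t v^{\kappa-2}e^{-4\pi nv}\dif v$ into the ``$1$'' piece and the truncated exponential, and claim each piece is a Gamma integral in $t$. For $\kappa_1>1$ this is false; in particular it is false for $\kappa_1=\frac32$, the only case used later. The ``$1$'' piece $\int_0^\infty t^{-\kappa_1}e^{-4\pi\ell t}\dif t$ diverges at $t=0$, and so do the terms $\int_0^\infty t^{i-\kappa_1}e^{-4\pi jt}\dif t$ with $i<\kappa_1-1$. Only their combination converges, because the bracket is $O(t^{\kappa-1})$. The factor $\Gamma(1-\kappa_1)$ in $b(n)$ is precisely the regularized value of these pieces. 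You must therefore either keep them paired, or replace $\kappa_1$ by a complex $s$ with $\mathrm{Re}(s)<1$, evaluate termwise, and continue analytically. This works because the combined integral is holomorphic for $\mathrm{Re}(s)<\kappa$, and the Gamma factors have no poles at $s=\kappa_1\in\Z+\frac12$.

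\textbf{Two smaller points.}
\begin{itemize}
\item Convergence of the $v$-integrals at $0$ does not come from a lower bound on $\kappa-2$; the displayed hypothesis gives none. What is actually used is $\kappa-2\in\Z_{\geq 0}$ and $\kappa_2+k>0$.
\item Near $v=0$ the incomplete gamma function does not decay, so exponential decay cannot justify exchanging $\sum_\ell$ with $\int_0^\infty$. That exchange has to come from $\int_0^\infty|\Gamma(1-\kappa_1,4\pi\ell v)|v^{\kappa-2}\dif v\ll\ell^{1-\kappa}$, together with growth bounds on $c_F^-(-\ell)$ and $c_g(j)$. Convergence of the resulting $\ell$-sum is a real issue, which the paper handles in \Cref{lem:bn} by appeal to \cite[p.~374]{Me2}.
\end{itemize}

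\textbf{The identification you flagged as the main obstacle.} It goes through by your proposed induction on $\mu$, and it is short. Put $B_\mu:=\binom{\kappa_1+k-1}{k-\mu}\binom{\kappa_2+k-1}{\mu}$, $j=n+\ell$, and
\[
	S_\mu:=j^{\mu-2k-\kappa_2+1}P_{\kappa,2-\kappa_1-\mu}(n,\ell)=\sum_{i=0}^{\kappa-2}\binom{i-\kappa_1-\mu}{i}n^ij^{\kappa_1+\mu-1-i}.
\]
Since $e^{-4\pi\ell v}q^{-\ell}=\overline{q}^{\,\ell}$, set $h:=-\frac{1}{2i}(4\pi\ell)^{1-\kappa_1}v^{-\kappa_1}\overline{q}^{\,\ell}$. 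Then
\[
	\partial_\tau^\mu\bigl(\Gamma(1-\kappa_1,4\pi\ell v)q^{-\ell}\bigr)=(-2\pi i\ell)^\mu\Gamma(1-\kappa_1,4\pi\ell v)q^{-\ell}+\sum_{a=0}^{\mu-1}(-2\pi i\ell)^{\mu-1-a}\partial_\tau^a h,
\]
and $\partial_\tau^a h$ only differentiates $v^{-\kappa_1}$. Factor out $\Gamma(1-\kappa_1)B_\mu j^{k-\mu}c_g(j)c_F^-(-\ell)$. The regularized incomplete-gamma term then projects to $\ell^\mu-\ell^{1-\kappa_1+\mu}S_0$. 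The $\partial_\tau^a h$ term projects to $-\ell^{1-\kappa_1}\binom{\kappa-2-\kappa_1-a}{\kappa-2}\ell^{\mu-1-a}n^{\kappa-1}j^{\kappa_1+a+1-\kappa}$.

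Pascal's rule $\binom{x+1}{i}=\binom{x}{i}+\binom{x}{i-1}$, with $x=i-\kappa_1-\mu$, telescopes $S_\mu-(j-n)S_{\mu-1}$ to a single term:
\[
	S_\mu=\ell S_{\mu-1}+\binom{\kappa-1-\kappa_1-\mu}{\kappa-2}n^{\kappa-1}j^{\kappa_1+\mu-\kappa}.
\]
Iterating from $S_0$ shows that the pieces add up to $\ell^\mu-\ell^{1-\kappa_1}S_\mu$. This is exactly the $(j,\ell,\mu)$-summand of $b(n)$ divided by $\Gamma(1-\kappa_1)B_\mu j^{k-\mu}c_g(j)c_F^-(-\ell)$.

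Note also that $S_\mu$ is the degree-$(\kappa-2)$ truncation of the binomial expansion of $\ell^{\kappa_1+\mu-1}=(j-n)^{\kappa_1+\mu-1}$. Hence $S_\mu-\ell^{\kappa_1+\mu-1}=O(j^{\kappa_1+\mu-\kappa})$, which is the source of convergence of the $\ell$-sum.

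With the regularization made explicit and this recursion supplied, your plan is a complete proof. As written, those are the two missing pieces.
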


Next, we apply \Cref{lem:holGen} in two situations: first, when $ F $ is of weight $ \frac{3}{2} $ and $ g $ is of weight $ \frac{1}{2} $, and secondly, when $ F $ and $ g $ are both of weight $ \frac{3}{2} $. For $ \ell \in \Z_{\geq 0} $, $ a, b \in \Z $, and Dirichlet characters $ \chi $ and $ \psi $, let
\[
	\Lambda_{\ell, a, b}^{\chi,\psi}(\tau) := 2\sum_{n=1}^\infty \lambda_{\ell,a,b}^{\chi,\psi}(n)q^n,\qquad\text{where}\qquad \lambda_{\ell,a,b}^{\chi,\psi}(n) := \sideset{}{^*}\sum_{\substack{t \geq 1, s \geq 0\\at^2-bs^2=n}}\chi(t)\overline{\psi(s)}\left(t\sqrt{a}-s\sqrt{b}\right)^\ell
\]
and the $ ^* $ next to the sum means that the terms with $ s = 0 $ are weighted by $ \frac{1}{2} $.

\begin{proposition}
\label{prop:hol1}
Suppose that $ \chi $ and $ \psi $ are even characters of conductors $ N_{\chi} $ and $ N_{\psi} $, respectively and $ N, a, b \in \Z_{\geq 1} $ with $ bN_{\psi}^2 \mid N $. If $ F $ is a harmonic Maass form of weight $ \frac{3}{2} $ on $ \Gamma_1(4N) $ that grows at most polynomially towards all cusps and satisfies $ \xi_{\frac{3}{2}}(F) = \theta_{\psi}\vert V_b $, then
\[
	\pihol\left([F^{-},\theta_{\chi}\vert V_a]_k\right) = -2^{3-2k}\pi\binom{2k}{k}\Lambda^{\chi,\psi}_{2k+1,a,b}.
\]
Moreover, $ \pihol\left([F,\theta_{\chi}\vert V_a]_k\right) $ is a holomorphic cusp form of weight $ 2k+2 $ on $ \Gamma_1(\lcm(4N, 4aN_{\chi}^2)) $ if $ k > 0 $ and a quasimodular form of weight $ 2 $ if $ k = 0 $.
\end{proposition}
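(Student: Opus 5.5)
We apply \Cref{lem:holGen} with $\kappa_1=\frac{3}{2}$, $\kappa_2=\frac{1}{2}$ and $g=\theta_\chi\vert V_a$, and compute the two contributions to $\pihol([F^-,g]_k)=\pihol([c_F^-(0)v^{-1/2},g]_k)+\pihol([F_0^-,g]_k)$ separately. The first task is to read off the coefficients of $F^-$ from the shadow. Since $\xi_{3/2}$ sends $c_F^-(0)v^{-1/2}$ to a constant multiple of $\overline{c_F^-(0)}$, and sends $c_F^-(-\ell)\Gamma(-\frac{1}{2},4\pi\ell v)q^{-\ell}$ to a constant times $\overline{c_F^-(-\ell)}\ell^{1/2}q^{\ell}$, the identity $\xi_{3/2}F=\theta_\psi\vert V_b=\sum_s\psi(s)q^{bs^2}$ gives
\[
	c_F^-(0)=\tfrac{1}{8\pi}\cdot\overline{\psi(0)}\cdot(\text{normalising constant}),\qquad c_F^-(-bs^2)=C\,\overline{\psi(s)}\,s\sqrt{b}\quad(s\geq1),
\]
and $c_F^-(-\ell)=0$ otherwise. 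As a sanity check, take $\psi$ trivial and $b=1$, where this must reproduce Zagier's completion of $\calH$, namely constant $\frac{1}{8\pi\sqrt v}$ and coefficient $\frac{1}{4\sqrt\pi}n$ at $q^{-n^2}$. The constant term is nonzero only when $\psi$ is principal, that is $N_\psi=1$.

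Next come the two pieces. \Cref{lem:holGen}(1) turns the $v^{-1/2}$ piece into $\alpha_{3/2,1/2,k}\sum_n n^{k+1/2}c_g(n)q^n$. With $c_g(at^2)=\chi(t)$, each $t\geq1$ contributes $\chi(t)(t\sqrt a)^{2k+1}$, counted twice because of $\pm t$. This is exactly the $s=0$ part of $\Lambda^{\chi,\psi}_{2k+1,a,b}$, weighted by $\frac12\cdot2$. For the $F_0^-$ piece, \Cref{lem:holGen}(2) gives a sum over $j=at^2$, $\ell=bs^2$ with $j-\ell=n$. The bracket
\[
	j^{\mu-2k}P_{2k+2,\frac12-\mu}(n,\ell)-\ell^{\mu+\frac12}
\]
must then be simplified into a polynomial in $\sqrt j,\sqrt\ell$. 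The key identity is that after summing over $\mu$ with the binomial weights $\binom{k+\frac12}{k-\mu}\binom{k-\frac12}{\mu}$, the whole expression collapses to a multiple of $(\sqrt j-\sqrt\ell)^{2k+1}=(t\sqrt a-s\sqrt b)^{2k+1}$.

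I expect this combinatorial collapse to be the main obstacle. I would first verify it for $k=0,1$ directly. For the general case I would prove it by comparing both sides as polynomials in $X=\sqrt\ell/\sqrt j$, using $P_{a,b}(X,Y)$ with $Y=n=j-\ell$ and recognising a hypergeometric ($\,{}_2F_1$ at $1$, Chu--Vandermonde) evaluation. Alternatively, Mertens' formula already has this form in \cite{Me2} for the Zagier case, and I would quote and adapt it. The same constants must then match $\alpha_{3/2,1/2,k}$ so that the $s=0$ and $s\geq1$ terms combine into the single normalisation $-2^{3-2k}\pi\binom{2k}{k}$. I would evaluate $\alpha_{3/2,1/2,k}$ by the same Vandermonde summation, using $\Gamma(\frac12)/\Gamma(\frac12-\mu)$ and $\binom{k\pm\frac12}{\cdot}$, to reach a central binomial coefficient.

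For the modularity claim, $[F,\theta_\chi\vert V_a]_k$ transforms with weight $2k+2$ on $\Gamma_1(\lcm(4N,4aN_\chi^2))$, since $\theta_\chi\vert V_a$ is modular on $\Gamma_0(4aN_\chi^2)$. Polynomial growth gives the hypotheses of holomorphic projection, and \Cref{lem:holProp}(2) gives a modular form for $k>0$ and a quasimodular form for $k=0$. It remains to show cuspidality for $k>0$. The constant term of a Rankin--Cohen bracket with $k\geq1$ vanishes at every cusp, because each summand contains a derivative of one of the factors and so kills constants. Hence $\pihol$ has zero constant term at each cusp, and we obtain a cusp form.
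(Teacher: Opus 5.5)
Your proposal is correct and follows the paper's route. Like the paper, you apply \Cref{lem:holGen} with $\kappa_1=\frac32$, $\kappa_2=\frac12$, read off $c_F^-(0)=-2\overline{\psi(0)}$ and $c_F^-(-bs^2)=-4\sqrt{\pi}\sqrt{b}\,s\,\overline{\psi(s)}$ from \Cref{lem:xi}, and then use Mertens' value $\alpha_{\frac32,\frac12,k}=2^{1-2k}\sqrt{\pi}\binom{2k}{k}$ together with his collapse identity (with constant $2^{-2k}\binom{2k}{k}$, \cite[Proposition 5.3]{Me2}), which is exactly what the paper cites. There are three small slips. The power of $j$ in the bracket should be $j^{\mu-2k+\frac12}$; as written the expression is not homogeneous. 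The sum defining $\alpha_{\frac32,\frac12,k}$ is a balanced ${}_3F_2$, so it needs Pfaff--Saalsch\"utz rather than Vandermonde. Finally, in the cuspidality step $\partial_\tau$ does not annihilate $v^{-1/2}$, so the constant term of the bracket is only $O(v^{-k-\frac12})$ rather than identically zero; this still suffices.
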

\begin{proof}
This is essentially \cite[Lemma 2.7]{KP1}, which is however stated for $ [F,\theta_{\chi}\vert V_a]_k \vert U_4 $. The proof follows from \Cref{lem:holProp} and \Cref{lem:holGen} using the evaluation of $ \alpha_{\frac{3}{2},\frac{1}{2},k} $ from \cite[Lemma 5.2]{Me2} and \cite[Proposition 5.3]{Me2}.
\end{proof}

We will prove a similar result with $ \theta_{\chi} $ replaced by $ \theta_{1,\chi} $ in a series of lemmas. To evaluate the coefficient $ \alpha_{\frac{3}{2},\frac{3}{2},k} $, we follow the approach of Mertens \cite[Lemma 5.2]{Me2} (see also \cite[Lemma V.2.6]{Me1}), who proved
\[
	\alpha_{\frac{3}{2},\frac{1}{2},k} = 2^{1-2k}\sqrt{\pi}\binom{2k}{k}.
\]

\begin{lemma}
\label{lem:alpha}
If $ k \in \Z_{\geq 0} $, then
\[
	\alpha_{\frac{3}{2},\frac{3}{2},k} = 2^{1-2k}\sqrt{\pi}\frac{2k+1}{2k+2}\binom{2k}{k}.
\]
\end{lemma}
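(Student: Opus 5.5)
The plan is to compute the sum in the definition of $\alpha_{\kappa_1,\kappa_2,k}$ in closed form, following the same route Mertens uses for $\alpha_{\frac{3}{2},\frac{1}{2},k}$: recognise it as a terminating balanced hypergeometric series and sum it with the Pfaff--Saalschütz identity.

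\emph{Specialising the definition.} With $\kappa_1=\kappa_2=\frac{3}{2}$ we have $\kappa_1+\kappa_2+2k-2=2k+1$ and $\kappa_1-1=\frac{1}{2}$. Hence
\[
	\alpha_{\frac{3}{2},\frac{3}{2},k}=\frac{2}{(2k+1)!}\sum_{\mu=0}^k t_\mu,
\]
where
\[
	t_\mu:=\frac{\Gamma(\frac12)\Gamma(2k+\frac32-\mu)}{\Gamma(\frac12-\mu)}\binom{k+\frac12}{k-\mu}\binom{k+\frac12}{\mu}.
\]
Writing the binomial coefficients through Gamma functions gives
\[
	\binom{k+\frac12}{k-\mu}=\frac{\Gamma(k+\frac32)}{(k-\mu)!\,\Gamma(\mu+\frac32)},\qquad \binom{k+\frac12}{\mu}=\frac{\Gamma(k+\frac32)}{\mu!\,\Gamma(k-\mu+\frac32)}.
\]

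\emph{Reduction to a ${}_3F_2$.} Next I would compute the term ratio $t_{\mu+1}/t_\mu$. Each Gamma factor contributes one linear factor, and the signs cancel in pairs. The result should be
\[
	\frac{t_{\mu+1}}{t_\mu}=\frac{(\mu-k)(\mu+\frac12)(\mu-k-\frac12)}{(\mu+1)(\mu+\frac32)(\mu-2k-\frac12)}.
\]
Therefore
\[
	\sum_{\mu=0}^k t_\mu=t_0\cdot{}_3F_2\!\left(-k,\tfrac12,-k-\tfrac12;\tfrac32,-2k-\tfrac12;1\right),
\]
with
\[
	t_0=\frac{\Gamma(2k+\frac32)\,\Gamma(k+\frac32)}{k!\,\Gamma(\frac32)}.
\]
This series terminates, and it is Saalschützian: the sum of the lower parameters, $1-2k$, equals one plus the sum of the upper parameters. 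This balancing is exactly what makes the closed form available.

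\emph{Applying Pfaff--Saalschütz.} I would use
\[
	{}_3F_2(-n,a,b;c,1+a+b-c-n;1)=\frac{(c-a)_n(c-b)_n}{(c)_n(c-a-b)_n}
\]
with $n=k$, $a=\frac12$, $b=-k-\frac12$, $c=\frac32$; one checks that $1+a+b-c-n=-2k-\frac12$. The sum then equals
\[
	\frac{(1)_k\,(k+2)_k}{(\frac32)_k\,(k+\frac32)_k}.
\]
Since $(\frac32)_k(k+\frac32)_k=(\frac32)_{2k}$, the product $t_0\cdot(\text{this})$ collapses to
\[
	\frac{\Gamma(2k+\frac32)\,\Gamma(k+\frac32)\,(2k+1)!}{\Gamma(\frac32)\,(\frac32)_{2k}\,(k+1)!}.
\]
Using $\Gamma(\frac32)(\frac32)_{2k}=\Gamma(2k+\frac32)$, the expression $\frac{2}{(2k+1)!}\sum_\mu t_\mu$ becomes
\[
	\frac{2\,\Gamma(k+\frac32)}{(k+1)!}.
\]
Finally I would insert
\[
	\Gamma(k+\tfrac32)=\frac{(2k+2)!}{4^{k+1}(k+1)!}\sqrt{\pi}
\]
and simplify
\[
	\frac{(2k+2)!}{(k+1)!^2}=\frac{2(2k+1)}{k+1}\binom{2k}{k}.
\]
This should yield $2^{1-2k}\sqrt{\pi}\,\frac{2k+1}{2k+2}\binom{2k}{k}$, as claimed.

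\emph{Expected obstacle.} The only delicate step is the sign and shift bookkeeping in the term ratio, in particular the factor $\Gamma(\frac12)/\Gamma(\frac12-\mu)=(-1)^\mu\frac{(2\mu)!}{4^\mu\mu!}$ and the parameter $-2k-\frac12$. I would double-check it against the cases $k=0$, where $\alpha=\sqrt{\pi}/2$, and $k=1$ directly. If the series turned out not to be balanced, the fallback would be a Zeilberger-style recurrence in $k$, but I expect Saalschütz to apply, just as in Mertens' computation of $\alpha_{\frac{3}{2},\frac{1}{2},k}$.
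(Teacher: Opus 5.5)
Your proposal is correct and follows the same route the paper indicates (rewrite the coefficient as a terminating balanced ${}_3F_2$ and sum it by Pfaff--Saalsch\"utz), and your term ratio, parameters and final simplification all check out. One small slip is in your proposed sanity check: at $k=0$ the prefactor $\frac{2}{(2k+1)!}$ equals $2$, so $\alpha_{\frac{3}{2},\frac{3}{2},0}=2\Gamma(\tfrac32)=\sqrt{\pi}$, not $\sqrt{\pi}/2$ (which is only the inner sum $t_0$), and $\sqrt{\pi}$ agrees with the claimed formula.
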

\begin{proof}
The key steps are expressing the coefficient in terms of a certain generalized hypergeometric series and using the Pfaff-Saalsch\"{u}tz identity \cite[Theorem 2.2.6]{AAR}. We omit the details.

\end{proof}

\begin{lemma}[{\cite[Proposition 3.3]{Me2}}]
\label{lem:xi}
Let $ F $ be a harmonic Maass form of weight $ \kappa $. We have
\[
	\left(\xi_{\kappa}F\right)(\tau) = (1-\kappa)\overline{c_F^{-}(0)}-\sum_{n=1}^{\infty}\frac{(4\pi)^{1-\kappa}}{n^{\kappa-1}}\overline{c_F^{-}(-n)}q^n.
\]
\end{lemma}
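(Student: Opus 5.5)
The plan is to compute $\xi_\kappa F$ directly from the decomposition \eqref{eq:Fdec}, applying $\xi_\kappa$ term by term. Since $\xi_\kappa$ is conjugate-linear and involves only $\frac{\partial}{\partial\overline{\tau}}$, it kills the holomorphic part $F^+$. So only $F^-$ contributes. The interchange of $\xi_\kappa$ with the infinite sum over $n$ is justified by locally uniform convergence of the differentiated series: the incomplete gamma factors decay like $e^{-4\pi|n|v}$, which dominates $|q^{n}|=e^{2\pi|n|v}$ for $n<0$. I would note this briefly and not dwell on it.

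Throughout I use the identity $\frac{\partial}{\partial\overline{\tau}} = \frac{1}{2}\left(\frac{\partial}{\partial u}+i\frac{\partial}{\partial v}\right)$. It gives $\frac{\partial}{\partial\overline{\tau}}\phi(v) = \frac{i}{2}\phi'(v)$ for any function of $v$ alone, and $\frac{\partial}{\partial\overline{\tau}}q^n = 0$.

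For the constant term, $\frac{\partial}{\partial\overline{\tau}}\, c_F^-(0)v^{1-\kappa} = \frac{i}{2}(1-\kappa)c_F^-(0)v^{-\kappa}$. Conjugating and multiplying by $2iv^\kappa$ gives $2i\cdot\left(-\frac{i}{2}\right)(1-\kappa)\overline{c_F^-(0)} = (1-\kappa)\overline{c_F^-(0)}$, which is the constant term in the claim.

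For a term with $n=-m<0$, I write the summand as $c_F^-(-m)\Gamma(1-\kappa,4\pi m v)q^{-m}$. Here $q^{-m}$ is holomorphic, so only the incomplete gamma factor is differentiated. By the fundamental theorem of calculus,
\[
	\frac{\dif}{\dif v}\Gamma(1-\kappa,4\pi m v) = -(4\pi m)^{1-\kappa}v^{-\kappa}e^{-4\pi m v}.
\]
Hence $\frac{\partial}{\partial\overline{\tau}}$ of the summand equals $-\frac{i}{2}(4\pi m)^{1-\kappa}v^{-\kappa}e^{-4\pi mv}c_F^-(-m)q^{-m}$. After complex conjugation, $\overline{q^{-m}}e^{-4\pi m v} = e^{2\pi i m u}e^{2\pi m v}e^{-4\pi m v} = q^m$. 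Multiplying by $2iv^\kappa$, the powers of $v$ cancel and the constant becomes $2i\cdot\frac{i}{2} = -1$. The result is $-(4\pi)^{1-\kappa}m^{1-\kappa}\,\overline{c_F^-(-m)}\,q^m$, matching the claimed coefficient.

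Summing over $m\geq 1$ gives the stated formula. The only step requiring care is the bookkeeping of signs and conjugations, in particular that conjugation turns $q^{-m}e^{-4\pi m v}$ into $q^m$ and the factor $2i\cdot\frac{i}{2}=-1$. There is no genuine obstacle.
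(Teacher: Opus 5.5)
Your proof is correct. The paper gives no argument of its own: it cites \cite[Proposition 3.3]{Me2} and remarks that the constants differ from Mertens' because of its normalization of $F^-$. Your term-by-term calculation is the standard proof behind that proposition, and it is carried out directly in this paper's normalization. Applying $\xi_\kappa=2iv^\kappa\overline{\partial/\partial\overline{\tau}}$ to \eqref{eq:Fdec} kills $F^+$ and turns $c_F^-(0)v^{1-\kappa}$ into $(1-\kappa)\overline{c_F^-(0)}$. Using $\overline{q^{-m}}e^{-4\pi mv}=q^m$, the term of index $-m$ becomes $-(4\pi m)^{1-\kappa}\overline{c_F^-(-m)}q^m$. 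What this buys over the citation is that it checks the renormalized constants rather than relying on a translation between conventions. These are exactly the constants used to derive \eqref{eq:cF}, and they reproduce $\xi_{\frac32}(\widehat{\calH})=-\frac{1}{16\pi}\theta_0$.

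Two small remarks. First, your decay argument for differentiating termwise needs one more observation. Convergence of the Fourier series of $F$ for every $v>0$ forces $c_F^-(-m)$ to grow subexponentially, and that is what makes the differentiated series converge locally uniformly. Alternatively, differentiate the coefficients $\int_0^1F(u+iv)e^{-2\pi inu}\,\dif u$ under the integral sign. Second, you tacitly assume $n_2\le -1$ in \eqref{eq:Fdec}. That assumption is implicit in the statement itself, since terms with $n>0$ in $F^-$ would contribute powers $q^{-n}$ to $\xi_\kappa F$. It holds in all of the paper's applications, where the shadow is a theta function.
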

We remark that statement of the lemma slightly differs from \cite[Proposition 3.3]{Me2} because of differences in the definition of $ F^{-} $.

\begin{lemma}[{\cite[Proposition 4.2]{OSS}}]
\label{lem:P}
If $ k \in \Z_{\geq 0} $ and $ m, n \in \Z_{\geq 1} $ with $ m > n $, then
\begin{multline*}
	2^{-2k}\frac{2k+1}{2k+2}\binom{2k}{k}\left(\sqrt{m}-\sqrt{n}\right)^{2k+2}\\
	=\sum_{\mu=0}^k\binom{k+\frac{1}{2}}{k-\mu}\binom{k+\frac{1}{2}}{\mu}m^{k-\mu+\frac{1}{2}}\cdot\left(m^{\mu-2k-\frac{1}{2}}P_{3+2k,\frac{1}{2}-\mu}(m-n,n)-n^{\frac{1}{2}+\mu}\right).
\end{multline*}
\end{lemma}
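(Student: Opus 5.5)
The plan is to reduce the identity, by homogeneity, to an identity of polynomials in one variable, and then to a single binomial coefficient identity.

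\textbf{Reduction to one variable.} Both sides are homogeneous of degree $k+1$ in $(m,n)$. Indeed, with $X=m-n$ and $Y=n$ we have $X+Y=m$, so
\[
	P_{3+2k,\frac12-\mu}(m-n,n)=\sum_{j=0}^{2k+1}\binom{j-\mu-\frac32}{j}(m-n)^j m^{2k+1-j}.
\]
We may therefore set $m=1$ and $n=x^2$ with $0<x<1$, and put $y:=1-x^2\in(0,1)$. The identity $\binom{j-\mu-\frac32}{j}=(-1)^j\binom{\mu+\frac12}{j}$ then gives
\[
	P_{3+2k,\frac12-\mu}(y,x^2)=\sum_{j=0}^{2k+1}\binom{\mu+\frac12}{j}(-y)^j=T\bigl((1-y)^{\mu+\frac12}\bigr)=T\bigl(x^{2\mu+1}\bigr).
\]
Here $T$ denotes truncation at degree $2k+1$ of a power series in $y$; the binomial series converges since $|y|<1$.

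\textbf{The key reformulation.} Write $A_\mu:=\binom{k+\frac12}{k-\mu}\binom{k+\frac12}{\mu}$ and $Q(x):=\sum_{\mu=0}^k A_\mu x^{2\mu+1}$. The right-hand side of the lemma becomes $T(Q)-Q$, where $Q$ is viewed as a power series in $y$. Let $c:=2^{-2k}\frac{2k+1}{2k+2}\binom{2k}{k}$. The claim is that
\[
	c(1-x)^{2k+2}=E(x)-Q(x)
\]
for some even polynomial $E$ of degree at most $2k+2$. This amounts to saying that the odd part of $c(1-x)^{2k+2}$ equals $-Q$, i.e.
\[
	A_\mu=c\binom{2k+2}{2\mu+1}\qquad(0\le\mu\le k).
\]

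\textbf{Why the claim suffices.} Since $1-x=1-\sqrt{1-y}=\frac y2+O(y^2)$, the function $c(1-x)^{2k+2}$ has $y$-adic valuation at least $2k+2$, so its truncation $T$ vanishes. Hence $T(Q)=T(E)$. Moreover $E$ is a polynomial in $y$ of degree at most $k+1\le 2k+1$, so $T(E)=E$. Therefore
\[
	T(Q)-Q=E-Q=c(1-x)^{2k+2}=c\left(\sqrt m-\sqrt n\right)^{2k+2},
\]
which is the lemma for $0<x<1$. By homogeneity this extends to all $m>n\ge1$.

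\textbf{The main obstacle} is the binomial identity $A_\mu=c\binom{2k+2}{2\mu+1}$. As a sanity check, $k=0$ gives $A_0=1=\frac12\cdot 2$. For the general case I would write $\binom{k+\frac12}{\mu}=\frac{\Gamma(k+\frac32)}{\mu!\,\Gamma(k-\mu+\frac32)}$ and similarly $\binom{k+\frac12}{k-\mu}=\frac{\Gamma(k+\frac32)}{(k-\mu)!\,\Gamma(\mu+\frac32)}$. I would then use the duplication formulas
\[
	\Gamma\!\left(\ell+\tfrac12\right)=\frac{(2\ell)!\sqrt\pi}{4^\ell\,\ell!},\qquad \mu!\,\Gamma\!\left(\mu+\tfrac32\right)=\frac{(2\mu+1)!\sqrt\pi}{2^{2\mu+1}}.
\]
With these, $A_\mu=\Gamma(k+\frac32)^2\,\frac{2^{2k+2}}{\pi\,(2\mu+1)!\,(2k-2\mu+1)!}$. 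Inserting $\Gamma(k+\frac32)=\frac{(2k+2)!\sqrt\pi}{4^{k+1}(k+1)!}$ gives $A_\mu=\frac{(2k+2)!^2}{4^{k+1}(k+1)!^2(2\mu+1)!(2k-2\mu+1)!}$. It then remains to compare this with $c\,\frac{(2k+2)!}{(2\mu+1)!(2k-2\mu+1)!}$, which reduces to checking
\[
	c=\frac{(2k+2)!}{4^{k+1}(k+1)!^2}=4^{-k-1}\binom{2k+2}{k+1},
\]
and this agrees with the definition of $c$ after simplification.
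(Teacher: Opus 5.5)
Your proof is correct. The paper gives no argument of its own for this identity; it imports it from \cite[Proposition 4.2]{OSS}. Your write-up therefore supplies a self-contained proof rather than an alternative to one in the text.

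Your key observation is as follows. After normalizing to $m=1$, $n=x^2$, $y=1-x^2$, the polynomial $P_{3+2k,\frac12-\mu}(m-n,n)$ is exactly the degree-$(2k+1)$ Taylor truncation in $y$ of $(1-y)^{\mu+\frac12}=x^{2\mu+1}$, so the right-hand side equals $T(Q)-Q$. Since $(1-x)^{2k+2}=O(y^{2k+2})$, and the even part of $(1-x)^{2k+2}$ has degree $k+1\le 2k+1$ in $y$, everything collapses to the single coefficient identity $A_\mu=c\binom{2k+2}{2\mu+1}$. Your duplication-formula computation verifies this; the last step you left implicit is $\binom{2k+2}{k+1}=\frac{2(2k+1)}{k+1}\binom{2k}{k}$.

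Two points are worth making explicit:
\begin{enumerate}
\item The relation $Q=E-c(1-x)^{2k+2}$ is first an identity of functions on $0<x<1$. It becomes an identity of power series in $y$, to which $T$ can be applied termwise, because all three terms are analytic on $|y|<1$ via $x=\sqrt{1-y}$.
\item Homogeneity is used with the real scaling factor $1/m$. This is legitimate because you prove the normalized identity for every real $x\in(0,1)$.
\end{enumerate}

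What your route buys over the citation is transparency and a bit of generality. It proves the identity for all real $m>n>0$. The same argument also adapts directly to the analogous identity for $\kappa_2=\frac12$ that underlies \Cref{prop:hol1}. There one uses $(1-x)^{2k+1}$, truncation at degree $2k$, and coefficients $\binom{k+\frac12}{k-\mu}\binom{k-\frac12}{\mu}=2^{-2k}\binom{2k}{k}\binom{2k+1}{2\mu+1}$.
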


\begin{lemma}
\label{lem:bn}
Let $ \chi $ be an odd character of conductor $ N_{\chi} $ and $ \psi $ an even character of conductor $ N_{\psi} $. Let $ N, a, b \in \Z_{\geq 1} $ with $ bN_{\psi}^2\mid N $. If $ F $ is a harmonic Maass form of weight $ \frac{3}{2} $ on $ \Gamma_1(4N) $ that grows at most polynomially towards all cusps and satisfies $ \xi_{\frac{3}{2}}(F) = \theta_{\psi}\vert V_b $, then
\[
	\pihol\left([F_0^{-},\theta_{1,\chi}\vert V_a]_k\right) = \sum_{n=1}^{\infty}b(n)q^n,
\]
where
\[
	b(n) = -2^{4-2k}\frac{\pi}{\sqrt{a}}\frac{2k+1}{2k+2}\binom{2k}{k}\sum_{\substack{t \geq 1, s \geq 1\\at^2-bs^2=n}}\chi(t)\overline{\psi(s)}\left(\sqrt{a}t-\sqrt{b}s\right)^{2k+2}.
\]
\end{lemma}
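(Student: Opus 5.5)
The plan is to apply \Cref{lem:holGen}(2) with $\kappa_1=\kappa_2=\frac32$ and $g=\theta_{1,\chi}\vert V_a$, and then collapse the inner sum over $\mu$ using \Cref{lem:P}. Three inputs are needed: the coefficients of $g$, those of $F_0^-$, and the $\mu$-sum.

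\emph{Coefficients of $g$.} Since $\chi$ is odd, $n\mapsto\chi(n)n$ is even, so
\[
\theta_{1,\chi}\vert V_a=\sum_{n\in\Z}\chi(n)nq^{an^2}=2\sum_{t\geq1}\chi(t)tq^{at^2}.
\]
Hence $c_g(j)=2\chi(t)t$ if $j=at^2$ with $t\geq1$, and $c_g(j)=0$ otherwise.

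\emph{Coefficients of $F_0^-$.} By \Cref{lem:xi} with $\kappa=\frac32$,
\[
\xi_{\frac32}F=-\tfrac12\overline{c_F^-(0)}-\sum_{n\geq1}(4\pi)^{-1/2}n^{-1/2}\,\overline{c_F^-(-n)}\,q^n .
\]
Similarly $\theta_\psi\vert V_b=\psi(0)+2\sum_{s\geq1}\psi(s)q^{bs^2}$, since $\psi$ is even. Comparing coefficients gives
\[
c_F^-(-\ell)=-2\sqrt{4\pi}\,\sqrt{\ell}\,\overline{\psi(s)}\quad\text{if }\ell=bs^2,\ s\geq1,
\]
and $c_F^-(-\ell)=0$ otherwise. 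The $v^{-1/2}$ term is removed in $F_0^-$, so it does not enter. Thus $c_F^-(-\ell)/\ell^{1/2}=-4\sqrt{\pi}\,\overline{\psi(s)}$.

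\emph{Substitution into \Cref{lem:holGen}(2).} The constraint $j-\ell=n$ becomes $at^2-bs^2=n$ with $t,s\geq1$, and $\Gamma(1-\kappa_1)=\Gamma(-\tfrac12)=-2\sqrt\pi$. The prefactors combine to
\[
-(-2\sqrt\pi)\cdot 2\chi(t)t\cdot(-4\sqrt\pi)\overline{\psi(s)}=-16\pi\,\chi(t)\overline{\psi(s)}\,t .
\]
This multiplies
\[
\sum_{\mu=0}^k\binom{k+\frac12}{k-\mu}\binom{k+\frac12}{\mu}j^{k-\mu}\Bigl(j^{\mu-2k-\frac12}P_{3+2k,\frac12-\mu}(n,\ell)-\ell^{\frac12+\mu}\Bigr).
\]
This is exactly the expression in \Cref{lem:P} with $m=j$ and $m-n\to n$, $n\to\ell$, except that \Cref{lem:P} has $m^{k-\mu+\frac12}$ where we have $j^{k-\mu}$. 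So I factor $j^{-1/2}=(\sqrt a\,t)^{-1}$ out of the $\mu$-sum and apply \Cref{lem:P} with $m=j>\ell$, which holds since $n\geq1$. The $\mu$-sum becomes
\[
(\sqrt a\,t)^{-1}\,2^{-2k}\tfrac{2k+1}{2k+2}\binom{2k}{k}\bigl(\sqrt a\,t-\sqrt b\,s\bigr)^{2k+2}.
\]
The factor $t$ cancels, leaving $-16\pi\cdot2^{-2k}/\sqrt a=-2^{4-2k}\pi/\sqrt a$ times $\frac{2k+1}{2k+2}\binom{2k}{k}$ and the stated sum, which is the claimed $b(n)$.

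\emph{Hypotheses of \Cref{lem:holGen}.} It remains to check $2k-2\geq\kappa_1+\kappa_2=3$, i.e.\ $k\geq3$. For smaller $k$ I would either note that Mertens' formula for the $F_0^-$ part holds more generally (the restriction really concerns the $v^{1-\kappa}$ term), or cite \cite[Theorem 4.6]{Me2} directly.

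I expect the main obstacle to be bookkeeping rather than any conceptual step: the sign and normalisation conventions for $F^-$, which the paper notes differ from \cite{Me2}, the factor $2$ from the even symmetrisation of both theta series, and the square roots $\sqrt{\ell}$ and $\sqrt{j}$. The $\sqrt{j}$ factor is precisely what must cancel against the $t$ in $c_g(j)$, so I would track it carefully.
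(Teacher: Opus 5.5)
Your proposal is correct and follows the paper's proof essentially line by line. You apply Lemma~\ref{lem:holGen}(2) with $\kappa_1=\kappa_2=\frac{3}{2}$, read off $c_g(at^2)=2\chi(t)t$ and $c_F^-(-bs^2)=-2\sqrt{4\pi}\,\sqrt{b}\,s\,\overline{\psi(s)}$ from Lemma~\ref{lem:xi}, use $\Gamma(-\frac{1}{2})=-2\sqrt{\pi}$, and collapse the $\mu$-sum with Lemma~\ref{lem:P}; the factor $j^{-1/2}=(\sqrt{a}\,t)^{-1}$ cancels the $t$ from $c_g$, just as the paper's $(\sqrt{a}t)^{2k-2\mu+1}/\sqrt{a}$ does.

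Your worry about $k\geq 3$ comes from a misprint in the hypothesis of Lemma~\ref{lem:holGen}. The intended condition is $\kappa_1+\kappa_2+2k\geq 2$, as in Lemma~\ref{lem:holProp}(3) and as the paper's own use of these results at $k=0$ requires, so no restriction on $k$ arises. The one step the paper includes that you omit is the remark that the $(t,s)$-sum converges, by the argument of \cite[p.~374]{Me2}; this matters because the sum can have infinitely many terms when $ab$ is not a square.
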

\begin{proof}
We apply \Cref{lem:holGen} with $ \kappa_1 = \kappa_2 = \frac{3}{2} $ and
\[
	g = \theta_{1,\chi}\vert V_a = \sum_{t=1}^\infty 2\chi(t)tq^{at^2}.
\]
Thus, we have
\[
	c_g(j) = \begin{cases}
		2\chi(t)t&\text{if $ j = at^2 $, $ t \in \Z_{\geq 1} $,}\\
		0&\text{otherwise.}
	\end{cases}
\]
Since we assume
\[
	\xi_{\frac{3}{2}}(F) = \theta_{\psi}\vert V_b = \sum_{s \in \Z}\psi(s)q^{bs^2} = \psi(0)+\sum_{s=1}^{\infty}2\psi(s)q^{bs^2},
\]
we get
\begin{equation}
\label{eq:cF}
	c_F^{-}(0) = -2\overline{\psi(0)},\qquad c_F^{-}(-\ell) = \begin{cases}
	-(4\pi)^{\frac{1}{2}}\sqrt{b}s2\overline{\psi(s)}&\text{if $ \ell = bs^2 $, $ s \in \Z_{\geq 1} $,}\\
	0&\text{otherwise}
	\end{cases}
\end{equation}
from \Cref{lem:xi}. By \Cref{lem:holGen} (2),
\begin{align*}
	b(n) = &-\Gamma\left(-\frac{1}{2}\right)\sum_{\substack{j \geq 1, \ell \geq 1\\j-\ell=n}}\sum_{\mu=0}^k\binom{k+\frac{1}{2}}{k-\mu}\binom{k+\frac{1}{2}}{\mu}j^{k-\mu}c_g(j)\frac{c_F^{-}(-\ell)}{\ell^{\frac{1}{2}}}\\
	&\cdot\left(j^{\mu-2k-\frac{1}{2}}P_{3+2k,\frac{1}{2}-\mu}(n,\ell)-\ell^{\frac{1}{2}+\mu}\right)\\
	= &-\Gamma\left(-\frac{1}{2}\right)\sum_{\substack{t \geq 1, s \geq 1\\at^2-bs^2=n}}\sum_{\mu=0}^k\binom{k+\frac{1}{2}}{k-\mu}\binom{k+\frac{1}{2}}{\mu}(at^2)^{k-\mu}2\chi(t)t\left(-(4\pi)^{\frac{1}{2}}\right)2\overline{\psi(s)}\\
	&\cdot\left((at^2)^{\mu-2k-\frac{1}{2}}P_{3+2k,\frac{1}{2}-\mu}(n,bs^2)-(bs^2)^{\frac{1}{2}+\mu}\right).
\end{align*}
Next, from $ \Gamma\left(-\frac{1}{2}\right) = -2\sqrt{\pi} $ we get
\begin{align*}
	b(n) = &-\frac{16\pi}{\sqrt{a}}\sum_{\substack{t \geq 1, s \geq 1\\at^2-bs^2=n}}\chi(t)\overline{\psi(s)}\sum_{\mu=0}^k\binom{k+\frac{1}{2}}{k-\mu}\binom{k+\frac{1}{2}}{\mu}(\sqrt{a}t)^{2k-2\mu+1}\\
	&\cdot\left((\sqrt{a}t)^{2\mu-4k-1}P_{3+2k,\frac{1}{2}-\mu}(n,(\sqrt{b}s)^2)-(\sqrt{b}s)^{1+2\mu}\right).
\end{align*}
By \Cref{lem:P}, the inner sum equals
\[
	2^{-2k}\frac{2k+1}{2k+2}\binom{2k}{k}\left(\sqrt{a}t-\sqrt{b}s\right)^{2k+2},
\]
and the formula for $ b(n) $ follows.

Finally, for the proof to be complete, it must be checked that the sum
\[
	\sum_{\substack{t\geq 1, s\geq 1\\at^2-bs^2=n}}\chi(t)\overline{\psi(s)}\left(\sqrt{a}t-\sqrt{b}s\right)^{2k+2}
\]
converges. This can be done in exactly the same way as in \cite[p. 374]{Me2}.
\end{proof}

\begin{proposition}
\label{prop:hol2}
Let $ \chi $ be an odd character of conductor $ N_{\chi} $ and $ \psi $ an even character of conductor $ N_{\psi} $. Let $ N, a, b \in \Z_{\geq 1} $ with $ bN_{\psi}^2\mid N $. If $ F $ is a harmonic Maass form of weight $ \frac{3}{2} $ on $ \Gamma_1(4N) $ that grows at most polynomially towards all cusps and satisfies $ \xi_{\frac{3}{2}}(F) = \theta_{\psi}\vert V_b $, then
\[
	\pihol\left(\left[F^{-},\theta_{1,\chi}\vert V_a\right]_k\right)=-2^{3-2k}\frac{\pi}{\sqrt{a}}\frac{2k+1}{2k+2}\binom{2k}{k}\Lambda^{\chi,\psi}_{2k+2,a,b}.
\]
Moreover, $ \pihol\left(\left[F,\theta_{1,\chi}\vert V_a\right]_k\right) $ is a holomorphic cusp form of weight $ 2k+3 $ on the group $ \Gamma_1(\lcm(4N,4aN_{\chi}^2)) $.
\end{proposition}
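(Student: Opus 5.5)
We split $F^{-} = c_F^{-}(0)v^{-\frac{1}{2}}+F_0^{-}$. By bilinearity of the Rankin--Cohen bracket and linearity of $\pihol$,
\[
	\pihol\left(\left[F^{-},\theta_{1,\chi}\vert V_a\right]_k\right) = c_F^{-}(0)\,\pihol\left(\left[v^{-\frac{1}{2}},\theta_{1,\chi}\vert V_a\right]_k\right)+\pihol\left(\left[F_0^{-},\theta_{1,\chi}\vert V_a\right]_k\right).
\]
The second term is \Cref{lem:bn}. Its coefficients involve the sum over $t\geq 1$, $s\geq 1$ with $at^2-bs^2=n$, weighted by
\[
	-2^{4-2k}\frac{\pi}{\sqrt{a}}\frac{2k+1}{2k+2}\binom{2k}{k}.
\]
Since $\Lambda^{\chi,\psi}_{2k+2,a,b}$ carries a factor $2$ in front of $\lambda$, this is exactly the $s\geq 1$ part of
\[
	-2^{3-2k}\frac{\pi}{\sqrt{a}}\frac{2k+1}{2k+2}\binom{2k}{k}\Lambda^{\chi,\psi}_{2k+2,a,b}.
\]
It therefore remains to show that the first term gives the $s=0$ part, namely the terms with $at^2=n$ and weight $\frac{1}{2}$.

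For the constant term we use \Cref{lem:holGen}(1) with $\kappa_1=\kappa_2=\frac{3}{2}$. This gives
\[
	\pihol\left(\left[v^{-\frac{1}{2}},g\right]_k\right) = \frac{1}{2}(4\pi)^{\frac{1}{2}}\,\alpha_{\frac{3}{2},\frac{3}{2},k}\sum_{n}n^{k+\frac{1}{2}}c_g(n)q^n .
\]
We insert $\alpha_{\frac{3}{2},\frac{3}{2},k}$ from \Cref{lem:alpha}, $c_g(at^2)=2\chi(t)t$, and $c_F^{-}(0)=-2\overline{\psi(0)}$ from \eqref{eq:cF}. The product of constants is then
\[
	-2\overline{\psi(0)}\cdot\sqrt{\pi}\cdot 2^{1-2k}\sqrt{\pi}\,\frac{2k+1}{2k+2}\binom{2k}{k}\cdot 2\chi(t)\,t\,(at^2)^{k+\frac{1}{2}}.
\]
This should equal
\[
	-2^{3-2k}\frac{\pi}{\sqrt{a}}\frac{2k+1}{2k+2}\binom{2k}{k}\cdot 2\cdot\frac{1}{2}\chi(t)\overline{\psi(0)}(\sqrt{a}t)^{2k+2},
\]
and indeed $t(at^2)^{k+\frac{1}{2}}=a^{-\frac{1}{2}}(\sqrt{a}t)^{2k+2}$, so the powers of $2$, $\pi$ and $a$ match. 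This bookkeeping is routine but is the step most prone to error. The convention on $\psi(0)$ (equal to $1$ only for trivial $\psi$) and the factor $\frac{1}{2}$ in the $^*$-sum are exactly what make the two expressions agree.

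For the "moreover" part, we write $F=F^{+}+F^{-}$ and apply \Cref{lem:holProp}(3):
\[
	\pihol\left(\left[F,\theta_{1,\chi}\vert V_a\right]_k\right) = \left[F^{+},\theta_{1,\chi}\vert V_a\right]_k+\pihol\left(\left[F^{-},\theta_{1,\chi}\vert V_a\right]_k\right).
\]
Here $\theta_{1,\chi}\vert V_a$ lies in $S_{\frac{3}{2}}$ on $\Gamma_0(4aN_\chi^2)$. The bracket $[F,\theta_{1,\chi}\vert V_a]_k$ transforms with weight $2k+3\geq 3>2$ on $\Gamma_1(\lcm(4N,4aN_\chi^2))$ and satisfies the growth conditions in the definition of holomorphic projection. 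By \Cref{lem:holProp}(2), its holomorphic projection is therefore a holomorphic modular form of that weight and level.

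It remains to show cuspidality, which is the main obstacle. I would first note that the constant term of $\pihol$ is $c_F(0)$, the constant term of the bracket itself. Since $\theta_{1,\chi}\vert V_a$ is a cusp form, every term of the bracket contains a derivative of a function with no constant term, or such a function times $F$. Hence the bracket decays at $i\infty$ apart from possible $v^{-\delta}$ contributions, and so $c_F(0)=0$. The same argument at every other cusp, using that $\theta_{1,\chi}\vert V_a$ vanishes there as well, shows vanishing at all cusps. Alternatively, one can argue via Sturm's characterization: $\pihol$ is orthogonal to Eisenstein series, because the unfolded inner product against an Eisenstein series picks out only the constant term, which vanishes.
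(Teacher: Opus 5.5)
Your proposal is correct and follows the paper's proof. You split $F^{-}=c_F^{-}(0)v^{-\frac{1}{2}}+F_0^{-}$, evaluate the first piece with \Cref{lem:holGen}(1) and \Cref{lem:alpha}, take the $s\geq 1$ terms from \Cref{lem:bn}, and handle the second claim via \Cref{lem:holProp}; your bookkeeping checks out, including $c_F^{-}(0)=-2\overline{\psi(0)}$ producing exactly the $s=0$ terms, with their weight $\frac{1}{2}$, in $\Lambda^{\chi,\psi}_{2k+2,a,b}$. The only difference is that you sketch why the projection is cuspidal (the bracket vanishes at every cusp because $\theta_{1,\chi}\vert V_a$ is a cusp form), whereas the paper simply refers to the argument in the proof of \cite[Lemma 3.1]{BKP}.
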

\begin{proof}
We again apply \Cref{lem:holGen} with $ \kappa_1 = \kappa_2 = \frac{3}{2} $ and $ g = \theta_{1,\chi}\vert V_a $. As in the proof of \Cref{lem:bn}, we have
\[
	c_g(j) = \begin{cases}
		2\chi(t)t&\text{if $ j = at^2, t \in \Z_{\geq 1} $,}\\
		0&\text{otherwise}.
	\end{cases}
\]
First, using the evaluation of $ \alpha_{\frac{3}{2},\frac{3}{2},k} $ from \Cref{lem:alpha} in \Cref{lem:holGen} (1), we get
\[
	\pihol\left([v^{-\frac{1}{2}},\theta_{1,\chi}\vert V_a]_k\right) = 2^{2-2k}\frac{\pi}{\sqrt{a}}\frac{2k+1}{2k+2}\binom{2k}{k}\sum_{t=1}^{\infty}\chi(t)(\sqrt{a}t)^{2k+2}q^{at^2}.
\]
%%%%%%%%%%%%%%%%%%%%%%%%%%%%%%%%%%%%%%%%%%%%%%%%%%%%%%%%%%%%%%%%%%%%%%%%%
%Indeed,
%\[
%	\pi^{-\frac{1}{2}}\pihol\left([v^{-\frac{1}{2}},\theta_{1,\chi}\vert V_a]_k\right) = \alpha_{\frac{3}{2},\frac{3}{2},k}\sum_{t=1}^{\infty}(at^2)^{k+\frac{1}{2}}2\chi(t)tq^{at^2} = \frac{2\alpha_{\frac{3}{2},\frac{3}{2},k}}{\sqrt{a}}\sum_{t=1}^{\infty}(\sqrt{a}t)^{2k+2}\chi(t)q^{at^2}.
%\]
%%%%%%%%%%%%%%%%%%%%%%%%%%%%%%%%%%%%%%%%%%%%%%%%%%%%%%%%%%%%%%%%%%%%%%%%
Secondly, by \Cref{lem:bn},
\[
	\pihol\left([F_0^{-},\theta_{1,\chi}\vert V_a]_k\right) = -2^{4-2k}\frac{\pi}{\sqrt{a}}\frac{2k+1}{2k+2}\binom{2k}{k}\sum_{n=1}^{\infty}\sum_{\substack{t \geq 1, s \geq 1\\at^2-bs^2=n}}\chi(t)\overline{\psi(s)}\left(\sqrt{a}t-\sqrt{b}s\right)^{2k+2}q^n.
\]

We have
\[
	\pihol\left([F^{-},\theta_{1,\chi}\vert V_a]_k\right) = \pihol\left([c_F^{-}(0)v^{-\frac{1}{2}},\theta_{1,\chi}\vert V_a]_k\right)+\pihol\left([F_0^{-},\theta_{1,\chi}\vert V_a]_k\right)
\]
with $ c_F^{-}(0) = -2\overline{\psi(0)} $ by \eqref{eq:cF}. The formula follows by plugging in the expressions for the individual terms.

To prove the second part, we note that the Rankin-Cohen bracket $ [F,\theta_{1,\chi}\vert V_a]_k $ satisfies modularity of weight $ 2k+3 $ on $ \Gamma := \Gamma_1(\lcm(4N,4aN_{\chi}^2)) $. The holomorphic projection is a holomorphic modular form of weight $ 2k+3 $ on $ \Gamma $ by \Cref{lem:holProp}. The fact that it is a cusp form can be showed in exactly the same way as in the proof of \cite[Lemma 3.1]{BKP}.
\end{proof}

\subsection{Application to Hurwitz class numbers}

We apply \Cref{prop:hol1,prop:hol2} to $ F := \widehat{\calH}\vert V_b $, where $ \widehat{\calH} $ was defined as the completion of the generating function for the Hurwitz class numbers. The $ \xi $-operator of weight $ \frac{3}{2} $ maps $ \widehat{H} $ to $ \xi_{\frac{3}{2}}\left(\widehat{\calH}\right) = -\frac{1}{16\pi}\theta_0 $, and hence
\[
	\xi_{\frac{3}{2}}\left(\widehat{\calH}|V_b\right) = 2iv^{\frac{3}{2}}\overline{\frac{\partial}{\partial\overline{\tau}}}\widehat{\calH}(b\tau) = 2ib^{-\frac{1}{2}}(bv)^{\frac{3}{2}}\left(\overline{\frac{\partial}{\partial\overline{\tau}}}\widehat{\calH}\right)(b\tau) = -\frac{b^{-\frac{1}{2}}}{16\pi}\theta_0\vert V_b.
\]

For $ \ell \in \Z_{\geq 0} $, $ m \in \Z $, $ M \in \Z_{\geq 1} $, and $ b \in \Z_{\geq 1} $, let
\[
	\lambda_{\ell,m,M,b}(n) := \sum_{\varepsilon\in\{\pm 1\}}\varepsilon^\ell\sideset{}{^*}\sum_{\substack{t \geq 1, s \geq 0\\t^2-bs^2 = n\\t \equiv \varepsilon m \pmod{M}}}\left(t-\sqrt{b}s\right)^{\ell+1}
\]
and consider the generating function
\[
	\Lambda_{\ell,m,M,b}(\tau) := \sum_{n=1}^{\infty}\lambda_{\ell,m,M,b}(n)q^n.
\]

The following is similar to \cite[Lemma 2.2]{KPY}, which follows as a special case by setting $ b = 1 $ and applying the operator $ U_4 $.

\begin{proposition}
\label{prop:holH}
If $ k \in \Z_{\geq 0} $, $ m \in \Z $, $ M \in \Z_{\geq 1} $, $ b \in \Z_{\geq 1} $, and $ j \in \{0,1\} $ is such that $ j \equiv k \pmod{2} $, then
\[
	\pihol\left(\left[\left(\widehat{\calH}\vert V_b\right)^{-},\theta_{j,m,M}\right]_{\lfloor\frac{k}{2}\rfloor}\right) = b^{-\frac{1}{2}}2^{-1-k}\binom{k}{\left\lfloor k/2\right\rfloor}\Lambda_{k,m,M,b}.
\]
Moreover, $ \pihol\left(\left[\widehat{\calH}\vert V_b,\theta_{j,m,M}\right]_{\lfloor\frac{k}{2}\rfloor}\right) $ is a holomorphic cusp form of weight $ k+2 $ on $ \Gamma_{4M^2b,M} = \Gamma_0(4M^2b)\cap\Gamma_1(M) $ (respectively $ \Gamma_0(4M^2b) $) if $ M \nmid m $ (respectively $ M \mid m $) if $ k \geq 1 $ and quasimodular on that group if $ k = 0 $.
\end{proposition}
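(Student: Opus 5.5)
Write $\ell:=\lfloor k/2\rfloor$, so that $k=2\ell+j$. The plan is to reduce to \Cref{prop:hol1} (for $j=0$) and \Cref{prop:hol2} (for $j=1$) by expanding $\theta_{j,m,M}$ in Dirichlet characters. Then we compare the resulting $\Lambda^{\chi,\psi}$-series with $\Lambda_{k,m,M,b}$. We take $F:=\widehat{\calH}\vert V_b$. By the computation just above, $\xi_{\frac32}(F)=-\frac{b^{-1/2}}{16\pi}\theta_0\vert V_b$, so \Cref{prop:hol1,prop:hol2} apply with $\psi=\chi_0$ after scaling by the constant $-\frac{b^{-1/2}}{16\pi}$. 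This scaling is real, so conjugation plays no role. Since $\widehat{\calH}$ is modular on $\Gamma_0(4)$, $F$ is modular on $\Gamma_0(4b)$, and we may take $N=b$, so that $bN_\psi^2\mid N$.

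\textbf{Character expansion.} The characters modulo $M$ do not directly see the condition $n\equiv m \pmod M$ when $\gcd(m,M)>1$. So first write $g=\gcd(m,M)$, $n=gn'$, and $M=gM'$; the condition becomes $n'\equiv m/g \pmod{M'}$ with $\gcd(m/g,M')=1$. Orthogonality then gives
\[
	\theta_{j,m,M}=g^j\frac{1}{\varphi(M')}\sum_{\chi \bmod M'}\overline{\chi}(m/g)\,\theta_{j,\chi}\vert V_{g^2},
\]
where each $\theta_{j,\chi}$ is understood via its primitive character, adjusted if necessary by standard Möbius/$V$-operator manipulations. Only characters with $\chi(-1)=(-1)^j$ survive. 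Symmetrizing $n\mapsto -n$ is exactly what produces the sum over $\varepsilon\in\{\pm1\}$ with weight $\varepsilon^{k}=\varepsilon^{j}$ in $\lambda_{k,m,M,b}$.

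\textbf{Applying the two propositions.} By bilinearity of the bracket and linearity of $\pihol$, we apply \Cref{prop:hol1} (with $a=g^2$ and weight parameter $\ell$) or \Cref{prop:hol2} term by term. This gives a combination of $\Lambda^{\chi,\chi_0}_{2\ell+1+j,a,b}$. Here $\sqrt a t=gt=n$ and $\sqrt b s$ appear, so the terms assemble into $(n-\sqrt b s)^{k+1}$ summed over $n\equiv\pm m \pmod M$, $n\ge1$, with $n^2-bs^2$ fixed. The $s=0$ terms come with weight $\tfrac12$ in the $j=0$ case; in the $j=1$ case they come from the $v^{-1/2}$ term, and one checks that they combine with the $s\ge1$ part to give exactly the starred sum.

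\textbf{Constants.} The main bookkeeping is verifying the constant $b^{-1/2}2^{-1-k}\binom{k}{\ell}$. For $j=0$ it is $-\frac{b^{-1/2}}{16\pi}\cdot(-2^{3-2\ell}\pi)\binom{2\ell}{\ell}\cdot 2 = b^{-1/2}2^{-2\ell}\binom{2\ell}{\ell}\cdot\frac12$, where the factor $2$ comes from $\Lambda=2\sum\lambda$ and the factor $\frac12$ from symmetrizing over $\pm$; this agrees. For $j=1$, we use $\frac{2\ell+1}{2\ell+2}\binom{2\ell}{\ell}=\frac12\binom{2\ell+1}{\ell}$, together with the factor $g/\sqrt a=1$ and the factor $2t$ in $\theta_1$, and we expect the same normalization. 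Checking these factors of $2$ is the step I expect to be most error-prone, so I would test it against \cite[Lemma 2.2]{KPY} in the case $b=1$.

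\textbf{Modularity.} For the second assertion, I would avoid characters and argue directly. The function $\theta_{j,m,M}$ lies in $M_{\frac12}$, respectively $S_{\frac32}$, on $\Gamma_{4M^2,M}$ by \Cref{lem:S}; if $M\mid m$ it equals $M^j\theta_{j}\vert V_{M^2}$, which is modular on $\Gamma_0(4M^2)$. Hence the bracket is modular of weight $k+2$ on $\Gamma_{4M^2b,M}$ (respectively $\Gamma_0(4M^2b)$), with trivial character because the weights are compatible. \Cref{lem:holProp} then gives holomorphy for $k\ge1$ and quasimodularity for $k=0$. Cuspidality follows as in \cite[Lemma 3.1]{BKP} and \Cref{prop:hol1,prop:hol2}: constant terms at every cusp vanish because $\theta_{1}$ is cuspidal, or because the bracket with $k\ge1$ kills constants.
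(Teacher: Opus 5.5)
Your proposal is correct and follows essentially the paper's proof. You split off $g=\gcd(m,M)$, expand $\theta_{j,m,M}$ into the $\theta_{j,\chi}\vert V_{g^2}$ for characters $\chi$ modulo $M/g$ of parity $(-1)^j$ via the orthogonality relation \eqref{eq:holH}, apply \Cref{prop:hol1,prop:hol2} with $a=g^2$, $\psi=\chi_0$ to $\widehat{\calH}\vert V_b$ rescaled by the real constant $-16\pi b^{1/2}$, and reassemble into $\Lambda_{k,m,M,b}$. The $j=1$ constant you left unchecked does come out as $g^{-1}b^{-1/2}2^{-1-k}\binom{k}{\lfloor k/2\rfloor}$ times $\Lambda^{\chi,\chi_0}_{k+1,g^2,b}$, with the $g^{-1}$ cancelling the $g^j$ from the expansion. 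Passing to primitive characters is unnecessary, since those propositions are coefficient identities valid for characters modulo $M/g$. Your direct modularity argument reaches the level $\Gamma_{4M^2b,M}$ more directly than the paper's character-by-character route. However, the character there is not trivial for non-square $b$, because $V_b$ contributes $\leg{b}{\cdot}$ in weight $\frac{3}{2}$. This is harmless, as the statement fixes no character and only $b\in\{1,4\}$ is used later.
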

\begin{proof}
If we let $ g = \gcd(m,M) $, $ m_1 = \frac{m}{g} $, and $ M_1 = \frac{M}{g} $, then
\begin{align*}
	\theta_{j,m,M}(\tau)& = \sum_{t \equiv m \pmod{M}}t^jq^{t^2} = \frac{1}{2}\sum_{\varepsilon \in \{\pm 1\}}\varepsilon^j\sum_{t \equiv \varepsilon m \pmod{M}}t^jq^{t^2}\\
	& = \frac{g^j}{2}\sum_{\varepsilon \in \{\pm 1\}}\varepsilon^j\sum_{t \equiv \varepsilon m_1 \pmod{M_1}}t^jq^{g^2t^2}.
\end{align*}
From orthogonality of characters, we get for every $ t \in \Z $
\begin{align*}
	\frac{1}{\varphi(M_1)}\sum_{\chi \pmod{M_1}}\overline{\chi(m_1)}\chi(t)& = \delta_{t \equiv m_1 \pmod{M_1}},\\
	\frac{1}{\varphi(M_1)}\sum_{\chi \pmod{M_1}}\overline{\chi(-m_1)}\chi(t)& = \delta_{t \equiv -m_1 \pmod{M_1}}.
\end{align*}
Either summing or subtracting these two equalities together yields
\begin{equation}
\label{eq:holH}
	\frac{2}{\varphi(M_1)}\sum_{\substack{\chi \pmod{M_1}\\\chi(-1) = (-1)^j}}\overline{\chi(m_1)}\chi(t) = \sum_{\varepsilon\in\{\pm 1\}}\varepsilon^j\delta_{t \equiv \varepsilon m_1 \pmod{M_1}}.
\end{equation}
Hence,
\[
	\theta_{j,m,M}(\tau) = \frac{g^j}{\varphi(M_1)}\sum_{\substack{\chi \pmod{M_1}\\\chi(-1)=(-1)^j}}\overline{\chi(m_1)}\sum_{t \in \Z}\chi(t)t^jq^{g^2t^2} = \frac{g^j}{\varphi(M_1)}\sum_{\substack{\chi\pmod{M_1}\\\chi(-1)=(-1)^j}}\overline{\chi(m_1)}\theta_{j,\chi}\vert V_{g^2}(\tau).
\]
If $ \chi $ is a character modulo $ M_1 $ such that $ \chi(-1) = (-1)^j $ and $ \chi_0 $ is the trivial character, then by \Cref{prop:hol1,prop:hol2},
\begin{align*}
	\pihol\left(\left[\left(\widehat{\calH}\vert V_b\right)^{-},\theta_{j,\chi}\vert V_{g^2}\right]_{\left\lfloor\frac{k}{2}\right\rfloor}\right)& = g^{-j}b^{-\frac{1}{2}}2^{-1-k}\binom{k}{\left\lfloor k/2\right\rfloor}\Lambda_{k+1,g^2,b}^{\chi,\chi_0}\\
	& = g^{-j}b^{-\frac{1}{2}}2^{-k}\binom{k}{\left\lfloor k/2\right\rfloor}\sum_{n=1}^{\infty}\sideset{}{^*}\sum_{\substack{t \geq 1, s \geq 0\\g^2t^2-bs^2 = n}}\chi(t)\left(gt-\sqrt{b}s\right)^{k+1}q^n.
\end{align*}
Moreover, $ \pihol\left(\left[\widehat{\calH}\vert V_b,\theta_{j,\chi}\vert V_{g^2}\right]_{\lfloor\frac{k}{2}\rfloor}\right) $ is a holomorphic cusp form of weight $ k+2 $ if $ k \geq 1 $ and quasimodular of weight $ 2 $ if $ k = 0 $. A short calculation using \eqref{eq:holH} completes the proof.
\end{proof}

Finally, we point out that if $ b $ is a square, then $ \lambda_{\ell,m,M,b}(n) $ has an alternative form, similarly as in \cite[Proposition 3.3]{BKP}.

\begin{lemma}
\label{lem:alt}
Let $ \ell \in \Z_{\geq 0} $, $ m \in \Z $, $ M \in \Z_{\geq 1} $, and $ b = b_1^2 $, where $ b_1 \in \Z_{\geq 1} $. If $ n \in \Z_{\geq 1} $, then
\[
	\lambda_{\ell,m,M,b}(n) = \sum_{\varepsilon\in\{\pm 1\}}\varepsilon^\ell\sideset{}{^*}\sum_{\substack{d|n, d\leq \sqrt{n}\\\frac{n}{d}+d\equiv \varepsilon 2m \pmod{2M}\\\frac{n}{d}-d\equiv 0 \pmod{2b_1}}}d^{\ell+1},
\]
where the $ * $ next to the sum indicates that the term with $ d = \sqrt{n} $ is counted with weight $ \frac{1}{2} $. 
\end{lemma}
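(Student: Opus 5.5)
Substitute $b=b_1^2$ into the definition of $\lambda_{\ell,m,M,b}(n)$ and reparametrize the solutions $(t,s)$ of $t^2-b_1^2s^2=n$ by factorizations of $n$. Since $b_1$ is an integer, $\sqrt{b}s=b_1s$ and the equation factors as
\[
	(t-b_1s)(t+b_1s)=n .
\]
Put $d:=t-b_1s$ and $e:=t+b_1s$. Because $t\geq 1$, $s\geq 0$ and $n\geq 1$, we get $e\geq t\geq 1$, so $e>0$; then $de=n>0$ forces $d>0$. Also $s\geq 0$ gives $d\leq e$, so $d\mid n$ and $d\leq\sqrt{n}$, with $e=\frac{n}{d}$. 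The summand $(t-\sqrt{b}s)^{\ell+1}$ becomes exactly $d^{\ell+1}$.

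Next I will show the correspondence is a bijection onto the index set of the claimed sum. Given a divisor $d\mid n$ with $d\leq\sqrt{n}$, we recover
\[
	t=\frac{1}{2}\left(\frac{n}{d}+d\right),\qquad s=\frac{1}{2b_1}\left(\frac{n}{d}-d\right).
\]
For $s$ to be a nonnegative integer we need $\frac{n}{d}-d\equiv 0\pmod{2b_1}$, and $d\leq\sqrt{n}$ gives $s\geq 0$. This congruence already makes $\frac{n}{d}-d$ even, so $\frac{n}{d}+d$ is even and $t$ is an integer, with $t\geq 1$ automatic. The condition $t\equiv\varepsilon m\pmod{M}$ is equivalent to $2t=\frac{n}{d}+d\equiv\varepsilon 2m\pmod{2M}$. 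The maps are mutually inverse, so the $(t,s)$ sum and the divisor sum run over the same terms for each $\varepsilon$.

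The remaining point is the weights. The primed sum weights $s=0$ by $\frac{1}{2}$. We have $s=0$ exactly when $d=\frac{n}{d}$, i.e.\ $d=\sqrt{n}$, which is exactly the term weighted $\frac{1}{2}$ in the statement. The sign factor $\varepsilon^\ell$ and the outer sum over $\varepsilon\in\{\pm1\}$ pass through unchanged, which gives the identity. There is no real obstacle; the only step needing care is the parity argument showing $t$ is an integer, which we get from the congruence modulo $2b_1$.
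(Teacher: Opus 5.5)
Your proposal is correct and follows essentially the same route as the paper: factor $t^2-b_1^2s^2=n$ as $(t-b_1s)(t+b_1s)=n$, set $d=t-b_1s$, recover $(t,s)$ uniquely from $d$ via the two congruences, and match the $s=0$ half-weight with the $d=\sqrt{n}$ half-weight. Your version also spells out a few details the paper leaves implicit, namely the positivity of $d$, the integrality of $t$, and $t\geq 1$.
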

\begin{proof}
By definition,
\[
	\lambda_{\ell,m,M,b}(n) := \sum_{\varepsilon\in\{\pm 1\}}\varepsilon^\ell\sideset{}{^*}\sum_{\substack{t \geq 1, s \geq 0\\t^2-b_1^2s^2=n\\t\equiv \varepsilon m \pmod{M}}}\left(t-b_1s\right)^{\ell+1}.
\]
The condition $ t^2-b_1^2s^2 = n $ is equivalent to $ (t-b_1s)(t+b_1s) = n $. If $ t > s \geq 0 $ satisfy this equality and $ t \equiv \varepsilon m \pmod{M} $, then we let $ d := t-b_1s $, and we get $ \frac{n}{d}+d = 2t \equiv \varepsilon 2m \pmod{2M} $ and $ \frac{n}{d}-d = 2b_1s \equiv 0 \pmod{2b_1} $. Conversely, if $ d \mid n $, $ d \leq \sqrt{n} $, is such that $ \frac{n}{d}+d \equiv \varepsilon 2m \pmod{2M} $ and $ \frac{n}{d}-d \equiv 0 \pmod{2b_1} $, then there is a unique solution to the system of equations $ t-b_1s = d $, $ t+b_1s = \frac{n}{d} $. 
\end{proof}

\section{Moments of Hurwitz class numbers}
\label{sec:hur}

In this section, we investigate moments of Hurwitz class numbers. If $ b \in \Z_{\geq 1} $, $ m \in \Z $, and $ M \in \Z_{\geq 1} $, then for $ k \in \Z_{\geq 0} $, the \emph{$ k $-th moment of Hurwitz class numbers} is defined by
\[
	H_{k,m,M,b}(n) := \sum_{t \equiv m \pmod{M}}H\left(\frac{n-t^2}{b}\right)t^k.
\]
We let
\[
	\calH_{k,m,M,b}(\tau) := \sum_{n=0}^{\infty} H_{k,m,M,b}(n)q^n
\]
be the generating function. If $ p $ is prime and $ r \in \Z_{\geq 1} $, then we also let
\[
	\mathscr{H}_{k,m,M,b}(p^r) := \sum_{\substack{p \nmid t\\t \equiv m \pmod{M}}}H\left(\frac{p^r-t^2}{b}\right)t^k.
\]

We note that we are taking these moments with $ H\left(\frac{p^r-t^2}{b}\right) $ instead of $ H\left(\frac{4p^r-t^2}{b}\right) $. This is due to the fact that in \Cref{prop:OSS}, the denominator is either $ 4 $ or $ 16 $. In order for the class number to be non-zero, $ t=2t_1 $ must be even and we can simplify the fraction.

\begin{definition}
\label{def:gamma}
If $ k \in \Z_{\geq 0} $, $ m \in \Z $, $ M \in \Z_{\geq 1} $, $ b \in \Z_{\geq 1} $, and $ j \in \{0,1\} $ is such that $ j \equiv k \pmod{2} $, then we let
\[
	f_{k,m,M,b} := \pihol\left(\left[\widehat{\calH}\vert V_b,\theta_{j,m,M}\right]_{\lfloor\frac{k}{2}\rfloor}\right) = [\calH\vert V_b,\theta_{j,m,M}]_{\left\lfloor\frac{k}{2}\right\rfloor}+b^{-\frac{1}{2}}2^{-1-k}\binom{k}{\left\lfloor k/2\right\rfloor}\Lambda_{k,m,M,b}.
\]
Let $ \gamma_{k,m,M,b} $ be the $ n $-th Fourier coefficient of $ f_{k,m,M,b} $, so that
\[
	f_{k,m,M,b}(\tau) = \sum_{n=0}^{\infty}\gamma_{k,m,M,b}(n)q^n.
\]
\end{definition}
By \Cref{prop:holH}, $ f_{k,m,M,b} $ is a holomorphic cusp form of weight $ k+2 $ on $ \Gamma_{4M^2b,M} = \Gamma_0(4M^2b)\cap\Gamma_1(M) $ (respectively $ \Gamma_0(4M^2b) $) if $ M \nmid m $ (respectively $ M \mid m $) if $ k \geq 1 $ and quasimodular on that group if $ k = 0 $.

Let $ c_{k,m,M,b}(n) $ denote the $ n $-th Fourier coefficient of $ [\calH\vert V_b,\theta_{r,m,M}]_{\left\lfloor\frac{k}{2}\right\rfloor} $, i.e.,
\[
	[\calH\vert V_b,\theta_{m,M}]_{\left\lfloor\frac{k}{2}\right\rfloor} = \sum_{n=0}^{\infty}c_{k,m,M,b}(n)q^n.
\]
We define the numbers
\[
	\alpha_{k,m,M,b}(n) := \binom{k}{\left\lfloor k/2 \right\rfloor}^{-1}\gamma_{k,m,M,b}(n),
\]
so that by \Cref{def:gamma},
\[
	\binom{k}{\left\lfloor k/2 \right\rfloor}^{-1}c_{k,m,M,b}(n) = \alpha_{k,m,M,b}(n)-b^{-\frac{1}{2}}2^{-1-k}\lambda_{k,m,M,b}(n).
\]

\subsection{Recurrence formulas}

First, we use results of Cohen \cite{Co} to prove a recurrence for the moments. The following lemma is a slight extension of \cite[(2.2)]{KPY}.

\begin{lemma}
\label{lem:rec}
If $ k \in \Z_{\geq 0} $, $ m \in \Z $, and $ M \in \Z_{\geq 1} $, then
\[
	\binom{k}{\left\lfloor k/2 \right\rfloor}^{-1}c_{k,m,M,b}(n) = \sum_{\mu=0}^{\left\lfloor k/2 \right\rfloor}(-1)^\mu 2^{-2\mu}\binom{k-\mu}{\mu}n^{\mu}H_{k-2\mu,m,M,b}(n).
\]
\end{lemma}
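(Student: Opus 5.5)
The plan is a direct coefficient computation from \Cref{def:RC}. First write out the Rankin--Cohen bracket $[\calH\vert V_b,\theta_{j,m,M}]_{\nu}$ with $\nu := \lfloor k/2\rfloor$, $\kappa_1 = \frac{3}{2}$, $\kappa_2 = j+\frac{1}{2}$. Since $\frac{1}{2\pi i}\frac{\partial}{\partial\tau}$ acts on $q^N$ as multiplication by $N$, the $n$-th coefficient equals
\[
	c_{k,m,M,b}(n) = \sum_{t \equiv m \pmod{M}} H\left(\frac{n-t^2}{b}\right)t^j\sum_{i=0}^{\nu}(-1)^i\binom{\nu+\frac{1}{2}}{\nu-i}\binom{\nu+j-\frac{1}{2}}{i}\left(\frac{n-t^2}{b}\cdot b\right)^{i}(t^2)^{\nu-i}.
\]
Here I use that the derivative of $\calH(b\tau)$ brings down $bN = n-t^2$ and the derivative of $\theta$ brings down $t^2$. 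Thus $c_{k,m,M,b}(n) = \sum_{t} H\left(\frac{n-t^2}{b}\right)t^j P_\nu(n-t^2,t^2)$, where $P_\nu$ is a homogeneous polynomial of degree $\nu$ that is independent of $b$, $m$, $M$. So the parameters $b,m,M$ only enter through the summation range, and the statement reduces to a polynomial identity that is already known for $b=1$.

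The second step is to expand $P_\nu(n-t^2,t^2)$ as a polynomial in $n$ and $t^2$ and match it with the right-hand side. After multiplying by $t^j$, the claimed identity is equivalent to
\[
	\binom{k}{\nu}^{-1}t^jP_\nu(n-t^2,t^2) = \sum_{\mu=0}^{\nu}(-1)^\mu 2^{-2\mu}\binom{k-\mu}{\mu}n^\mu t^{k-2\mu}
\]
as polynomials in $n$ and $t$, using $k-2\mu = j+2(\nu-\mu)$. This is precisely the identity of Cohen \cite{Co} that underlies \cite[(2.2)]{KPY}, whose Rankin--Cohen coefficients coincide with ours. Concretely, it expresses that the Rankin--Cohen polynomial is, up to normalization, a Gegenbauer-type polynomial, namely a coefficient of $(1-2tX+nX^2)^{-1/2}$-type generating functions. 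I would therefore quote \cite[(2.2)]{KPY} for the case $b=1$, $M$ arbitrary. Since the identity is purely polynomial, it then transfers verbatim to general $b$.

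The main obstacle is checking the normalization in the polynomial identity: the factor $\binom{k}{\nu}^{-1}$, the powers $2^{-2\mu}$, and the binomials with half-integers. If citing is not convincing, I would verify it directly. I would compare the coefficient of $n^\mu t^{k-2\mu}$ on both sides, which amounts to a finite hypergeometric (Chu--Vandermonde) sum over $i\ge\mu$ of $(-1)^{i}\binom{i}{\mu}\binom{\nu+\frac{1}{2}}{\nu-i}\binom{\nu+j-\frac{1}{2}}{i}$. I would evaluate this via Vandermonde, treating $j=0$ and $j=1$ separately. Finally, the $n^0$ term recovers $\binom{k}{\nu}^{-1}\cdot\binom{k}{\nu}H_{k,m,M,b}(n)$, which is a sanity check on the normalization.
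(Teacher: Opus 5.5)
Your proposal is correct and is essentially the paper's argument. The paper also reduces everything to Cohen's identity, quoting \cite[Theorem 7.1 b)]{Co} in the form $\sum_\mu c_\mu\,\partial_\tau^\mu\bigl(f_2\,\partial_\tau^{j-\mu}f_1\bigr)$, whose $q^n$-coefficients are directly multiples of $n^\mu H_{k-2\mu,m,M,b}(n)$. It then normalizes the constants with the duplication formula, separately for $j=0,1$, which is exactly the step your Vandermonde fallback would end in.

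One slip in that fallback: expanding $(n-t^2)^i$ contributes a factor $(-1)^{i-\mu}$. So the coefficient of $n^\mu t^{k-2\mu}$ is $(-1)^\mu\sum_{i\ge\mu}\binom{i}{\mu}\binom{\nu+\frac12}{\nu-i}\binom{\nu+j-\frac12}{i}=(-1)^\mu\binom{\nu+j-\frac12}{\mu}\binom{k-\mu}{\nu-\mu}$, with no alternating sign in $i$, as your own $\mu=0$ sanity check requires.
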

\begin{proof}
To simplify notation, we let $ \partial_{\tau}^{\mu} := \frac{\partial^{\mu}}{\partial\tau^{\mu}} $. For two holomorphic functions $ f_1 $ and $ f_2 $, $ k_1, k_2 \in \R $, and $ j \in \Z_{\geq 0} $, we have by \cite[Theorem 7.1 b)]{Co}
\[
	[f_2,f_1]_j = \frac{1}{(2\pi i)^j}\sum_{\mu=0}^j (-1)^\mu \frac{\Gamma(j+k_1)\Gamma(2j-\mu+k_1+k_2-1)}{\mu!(j-\mu)!\Gamma(j-\mu+k_1)\Gamma(j+k_1+k_2-1)}\partial_{\tau}^{\mu}\left(f_2 \cdot \partial_{\tau}^{j-\mu}f_1\right).
\]
Let $ \ell \in \{0,1\} $ be such that $ \ell \equiv k \pmod{2} $. For $ f_1 = \theta_{\ell,m,M} $ and $ f_2 = \calH\vert V_b $ we have
\begin{align*}
	\partial_{\tau}^{j-\mu}f_1& = \partial_{\tau}^{j-\mu}\sum_{t\equiv m \pmod{M}}t^{\ell}q^{t^2} = (2\pi i)^{j-\mu}\sum_{t \equiv m \pmod{M}}t^{2j-2\mu+\ell}q^{t^2},\\
	\partial_{\tau}^{\mu}\left(f_2\cdot\partial_{\tau}^{j-\mu}f_1\right)& = (2\pi i)^j\sum_{n=0}^{\infty}\sum_{t \equiv m \pmod{M}}H\left(\frac{n-t^2}{b}\right)t^{2j-2\mu+\ell}n^\mu q^n.
\end{align*}
If $ k_1 = \frac{1}{2}+\ell $ and $ k_2 = \frac{3}{2} $, then we get (multiplying the equality by $ \sqrt{\pi} $)
\begin{multline*}
	\frac{\sqrt{\pi}\Gamma(j+\ell+1)}{\Gamma(j+k_1)}[\calH\vert V_b,\theta_{m,M}]_j = \sum_{n=0}^{\infty}\sum_{t\equiv m \pmod{M}}H\left(\frac{n-t^2}{b}\right)\\
	\cdot\sum_{\mu=0}^j(-1)^{\mu}\frac{\sqrt{\pi}\Gamma(2j-\mu+\ell+1)}{\mu!(j-\mu)!\Gamma(j-\mu+k_1)}t^{2j-2\mu+\ell}n^{\mu}q^n.
\end{multline*}
After we use the duplication formula $ \Gamma(z)\Gamma\left(z+\frac{1}{2}\right) = \sqrt{\pi}\Gamma(2z)2^{1-2z} $ on both sides, this simplifies to
\begin{multline*}
	2^{2j+\ell}\frac{\Gamma(j+\ell+1)\Gamma(j+1)}{\Gamma(2j+\ell+1)}[\calH\vert V_b,\theta_{m,M}]_j = \sum_{n=0}^{\infty}\sum_{t \equiv m \pmod{M}}H\left(\frac{n-t^2}{b}\right)\\
	\cdot\sum_{\mu=0}^j(-1)^{\mu}\frac{\Gamma(2j-\mu+\ell+1)}{\mu!\Gamma(2j-2\mu+\ell+1)}2^{2j-2\mu+\ell}t^{2j-2\mu+\ell}n^\mu q^n
\end{multline*}
(as can be seen by evaluating the expression separately for $ \ell=0$ and $ \ell=1 $). Comparing the coefficients of $ q^n $ completes the proof.
\end{proof}

The next proposition is similar to \cite[(2.7) and Theorem 2.3]{KPY}. It is stated in terms of the numbers $ T(k,\mu) = \frac{k-2\mu+1}{k-\mu+1}\binom{k}{\mu} $ defined in \eqref{eq:T}.

\begin{proposition}
\label{prop:Hk}
Let $ k \in \Z_{\geq 0} $, $ m \in \Z $, $ M \in \Z_{\geq 1} $, and $ b \in \Z_{\geq 1} $. We have
\begin{multline*}
	H_{k,m,M,b}(n) = \delta_{2\mid k}C_{\frac{k}{2}}2^{-k}n^{k/2}H_{0,m,M,b}(n)\\
	+\sum_{\mu = 0}^{\left\lfloor(k-1)/2\right\rfloor}2^{-2\mu}T(k,\mu)\left(\alpha_{k-2\mu,m,M,b}(n)-b^{-\frac{1}{2}}2^{-1-k+2\mu}\lambda_{k-2\mu,m,M,b}(n)\right)n^{\mu}.
\end{multline*}
\end{proposition}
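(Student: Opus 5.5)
Our plan is to invert the triangular relation of \Cref{lem:rec}. For each $\kappa \in \Z_{\geq 0}$, write $c'_\kappa(n) := \binom{\kappa}{\lfloor \kappa/2\rfloor}^{-1}c_{\kappa,m,M,b}(n)$. \Cref{lem:rec} states that
\[
	c'_\kappa(n) = \sum_{\mu=0}^{\lfloor \kappa/2\rfloor}(-1)^\mu 2^{-2\mu}\binom{\kappa-\mu}{\mu}n^\mu H_{\kappa-2\mu,m,M,b}(n).
\]
Since $n$ is fixed, we set $x := n/4$. The relation becomes $c'_\kappa = \sum_\mu (-1)^\mu\binom{\kappa-\mu}{\mu}x^\mu H_{\kappa-2\mu}$. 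This is exactly the relation between the monomial basis and the Chebyshev polynomials of the second kind. If $U_\kappa$ denotes the rescaled polynomial with $U_\kappa(2\cos\phi\cdot\sqrt{x}) = x^{\kappa/2}\sin((\kappa+1)\phi)/\sin\phi$, its expansion is $\sum_\mu(-1)^\mu\binom{\kappa-\mu}{\mu}x^\mu y^{\kappa-2\mu}$. The classical inversion is
\[
	y^k = \sum_{\mu=0}^{\lfloor k/2\rfloor}T(k,\mu)x^\mu U_{k-2\mu}(y),
\]
where $T(k,\mu)=\frac{k-2\mu+1}{k-\mu+1}\binom{k}{\mu}$ are the ballot numbers. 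These count the multiplicity of the $SU(2)$ representation $\mathrm{Sym}^{k-2\mu}$ inside $(\mathrm{Sym}^1)^{\otimes k}$. By linearity, replacing $y^{\kappa-2\mu}$ by $H_{\kappa-2\mu}$, we obtain
\[
	H_{k,m,M,b}(n) = \sum_{\mu=0}^{\lfloor k/2\rfloor}T(k,\mu)\,2^{-2\mu}n^\mu\, c'_{k-2\mu}(n).
\]

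To prove this inversion rigorously, we will argue by induction on $k$. Alternatively, we can verify the identity $\sum_{\mu+\nu=s}(-1)^\nu T(k,\mu)\binom{k-2\mu-\nu}{\nu} = \delta_{s,0}$ directly. The cleanest route is the generating function $\sum_\kappa U_\kappa z^\kappa = (1-yz+xz^2)^{-1}$ combined with the known generating function of the ballot numbers, or equivalently the Clebsch--Gordan rule $\mathrm{Sym}^1\otimes\mathrm{Sym}^j=\mathrm{Sym}^{j+1}\oplus\mathrm{Sym}^{j-1}$. That rule gives the recursion $y\,U_j = U_{j+1}+xU_{j-1}$, and $T$ satisfies the matching Pascal-type recursion $T(k+1,\mu) = T(k,\mu)+T(k,\mu-1)$ with the boundary at $\mu=(k+1)/2$ handled appropriately. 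Because the identity is purely formal in $y$, it transfers to the moments via the linear map $y^j\mapsto H_{j,m,M,b}(n)$. This step is purely combinatorial, and we expect it to be the main obstacle only in bookkeeping.

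Next we substitute $c'_\kappa(n) = \alpha_{\kappa,m,M,b}(n)-b^{-1/2}2^{-1-\kappa}\lambda_{\kappa,m,M,b}(n)$, which follows from \Cref{def:gamma}. We apply this for every $\kappa=k-2\mu\geq 1$, that is, for all $\mu\leq\lfloor(k-1)/2\rfloor$, and this yields the stated sum. The remaining term is $\mu=k/2$, which occurs only when $k$ is even. There $\kappa=0$, and we use $c'_0(n) = c_{0,m,M,b}(n) = H_{0,m,M,b}(n)$ directly: the zeroth Rankin--Cohen bracket is just the product $\calH\vert V_b\cdot\theta_{m,M}$, or equivalently \Cref{lem:rec} with $k=0$. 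The coefficient of this term is $T(k,k/2)2^{-k}n^{k/2} = C_{k/2}2^{-k}n^{k/2}$. This gives the $\delta_{2\mid k}$ term and completes the plan.
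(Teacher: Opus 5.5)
Your proposal is correct and follows the paper's skeleton. You invert the unitriangular system of \Cref{lem:rec} and identify the inverse coefficients with $T(k,\mu)$. You then use \Cref{def:gamma} to replace $\binom{\kappa}{\lfloor\kappa/2\rfloor}^{-1}c_{\kappa,m,M,b}(n)$ by $\alpha_{\kappa,m,M,b}(n)-b^{-1/2}2^{-1-\kappa}\lambda_{\kappa,m,M,b}(n)$ for $\kappa\geq 1$, keeping $c_{0,m,M,b}(n)=H_{0,m,M,b}(n)$ for the top term.

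You differ from the paper only in how the inversion is justified. The paper makes an ansatz with unknown coefficients $U(\ell,\mu)$ and substitutes it back into \Cref{lem:rec}. This gives the recurrence $U(k,\mu)=-\sum_{j=1}^{\mu}(-1)^j\binom{k-j}{j}U(k-2j,\mu-j)$, and the paper then cites \cite[Theorem 2.3]{KPY} for $U(k,\mu)=T(k,\mu)$, doing no combinatorics itself. You instead recognize the system as the change of basis from monomials to homogenized Chebyshev polynomials of the second kind. You then prove $y^k=\sum_{\mu}T(k,\mu)x^\mu U_{k-2\mu}(y)$ directly by induction from $yU_j=U_{j+1}+xU_{j-1}$ and the Pascal rule for $T(k,\mu)=\binom{k}{\mu}-\binom{k}{\mu-1}$. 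This works, and the boundary case is automatic: for odd $k$ that formula gives $T(k,\frac{k+1}{2})=0$, and for even $k$ the extra term carries $U_{-1}=0$.

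Your identity is the inverse on the opposite side from the one the paper's recurrence encodes, which reads $\sum_{j+\nu=\mu}(-1)^j\binom{k-j}{j}T(k-2j,\nu)=\delta_{\mu,0}$. For a unitriangular matrix the two are equivalent. Transferring the polynomial identity through the linear map $y^j\mapsto H_{j,m,M,b}(n)$ after specializing $x=n/4$ is legitimate. Indeed, \Cref{lem:rec} says exactly that this map sends $U_\kappa$ to $\binom{\kappa}{\lfloor\kappa/2\rfloor}^{-1}c_{\kappa,m,M,b}(n)$.

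The paper's route is shorter because it outsources the combinatorics. Yours is self-contained and explains why ballot numbers appear: they are the multiplicities of $\mathrm{Sym}^{k-2\mu}$ in $(\mathrm{Sym}^1)^{\otimes k}$.
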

\begin{proof}
To simplify the notation, we let
\[
	A_{\ell,m,M,b}(n) := \alpha_{\ell,m,M,b}(n)-b^{-\frac{1}{2}}2^{-1-\ell}\lambda_{\ell,m,M,b}(n).
\]
By \Cref{lem:rec},
\[
	H_{k,m,M,b}(n) = A_{k,m,M,b}(n)-\sum_{j=1}^{\left\lfloor k/2 \right\rfloor} (-1)^j2^{-2j}\binom{k-j}{j}n^{j}H_{k-2j,m,M,b}(n).
\]
We see from this recurrence that for $ \ell \in \Z_{\geq 1} $ there exist some rational numbers $ U(\ell,\mu) $ such that
\[
	H_{\ell,m,M,b}(n) = \delta_{2\mid\ell}U(\ell,\ell/2)2^{-\ell}n^{\ell/2}H_{0,m,M,b}(n)+\sum_{\mu=0}^{\left\lfloor(\ell-1)/2\right\rfloor}2^{-2\mu}U(\ell,\mu)A_{\ell-2\mu,m,M,b}(n)n^\mu.
\]
After substituting this into the previous formula, we obtain the following relations for the numbers $ U(k,\mu) $:
\[
	U(k,0) = 1,\quad U(k,\mu) = -\sum_{j=1}^{\mu} (-1)^j\binom{k-j}{j}U(k-2j,\mu-j),\quad 1 \leq \mu \leq \left\lfloor\frac{k}{2}\right\rfloor.
\]
This is the same recurrence as in \cite[(2.8)]{KPY}, and these numbers satisfy $ U(k,\mu) = T(k,\mu) $ by \cite[Theorem 2.3]{KPY}. Moreover, $ T(k,k/2) = C_{\frac{k}{2}} $.
\end{proof}

\subsection{Relations between moments}

The purpose of the next lemma is to express the previously defined moments $ \mathscr{H}_{k,m,M,b}(p^r) $ with the added condition $ p \nmid t $ in terms of the moments $ H_{k,m,M,b} $ when $ b $ is a square. It is a modification of \cite[Lemma 4.1]{KP1}.

\begin{lemma}
\label{lem:Hp}
Let $ p \geq 3 $ be a prime, $ r \in \Z_{\geq 1} $, and $ b $ a square of a positive integer such that $ p \nmid b $. Let $ m \in \Z $, $ M \in \Z_{\geq 1} $, and $ k \in \Z_{\geq 0} $.
\begin{enumerate}
\item If $ r = 1 $ or both $ p \mid M $ and $ p \nmid m $, then
\[
	\mathscr{H}_{k,m,M,b}(p^r) = H_{k,m,M,b}(p^r)-\delta_{M\mid m}\delta_{k=0}\delta_{b=1}H(p).
\]
\item If $ r \geq 2 $ and $ p \nmid M $, then
\begin{align*}
	\mathscr{H}_{k,m,M,b}(p^r) = &H_{k,m,M,b}(p^r)-p^{k+1}H_{k,m\bar{p},M,b}(p^{r-2})-\delta_{M\mid m}\delta_{k=0}\delta_{2\nmid r}\delta_{b=1}H(p)\\
	&-\delta_{2\mid r}\frac{1}{12}p^{\frac{rk}{2}}(p-1)\left(\delta_{p^{\frac{r}{2}}\equiv m \pmod{M}}+(-1)^k\delta_{-p^{\frac{r}{2}}\equiv m \pmod{M}}\right),
\end{align*}
where $ \bar{p} $ denotes the multiplicative inverse of $ p $ modulo $ M $.
\end{enumerate}
\end{lemma}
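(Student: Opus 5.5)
The plan is to start from the definition $H_{k,m,M,b}(p^r)=\sum_{t\equiv m\ (M)}H\bigl(\tfrac{p^r-t^2}{b}\bigr)t^k$ and subtract the terms with $p\mid t$. Writing $t=pt_1$, these terms are
\[
\sum_{\substack{pt_1\equiv m\ (M)}}H\Bigl(\frac{p^r-p^2t_1^2}{b}\Bigr)p^kt_1^k .
\]
In case (1), if $p\mid M$ and $p\nmid m$, no $t\equiv m \pmod M$ is divisible by $p$, so nothing is subtracted. If $r=1$, then $p-p^2t_1^2=p(1-pt_1^2)$ is negative unless $t_1=0$. The term $t_1=0$ contributes $H(p/b)\cdot 0^k$. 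This is nonzero only if $k=0$, $M\mid m$, and $p/b$ is an integer. Since $p\nmid b$ and $b$ is a square, the last condition means $b=1$, which gives the correction $\delta_{M\mid m}\delta_{k=0}\delta_{b=1}H(p)$.

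For case (2) with $r\ge2$ and $p\nmid M$, the condition $pt_1\equiv m\pmod M$ is equivalent to $t_1\equiv m\bar p\pmod M$. Setting $n:=\frac{p^{r-2}-t_1^2}{b}$, the argument becomes $\frac{p^2(p^{r-2}-t_1^2)}{b}=p^2n$, and integrality is unaffected because $p\nmid b$. I then apply \Cref{lem:delta1} with $D=-n$ whenever $n>0$ and $-n$ is a discriminant:
\[
H(p^2 n)=pH(n)+\Bigl(1+\leg{D_p}{p}\Bigr)H(|D_p|).
\]
Multiplied by $p^kt_1^k$ and summed, the first term gives exactly $p^{k+1}H_{k,m\bar p,M,b}(p^{r-2})$. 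The remaining task is to show that the second term vanishes, apart from boundary cases. The key observation is that $bn=p^{r-2}-t_1^2$.

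If $p\nmid t_1$ and $r\ge3$, then $p\nmid n$, so $D_p=-n$ and $\leg{-n}{p}=\leg{t_1^2/b}{p}=1$. This looks like it produces a nonzero term, so I would re-check the direction of the identity. The resolution is that the $p\nmid t$ sum at level $p^r$ is related to the full sum at level $p^{r-2}$ through \Cref{lem:delta1}, and the iteration must be organized so that the $(1+\leg{\cdot}{p})$ terms correspond to $t_1$ with $p\mid t_1$, which in turn telescope. The cleaner route is the one in \cite[Lemma 4.1]{KP1}: decompose $H(p^2n)$ via \Cref{lem:delta2}, with $-bn=\Delta f^2$, according to $p$-adic valuation, and compare $\sum_{p\mid t}$ at level $p^r$ with the full sum at level $p^{r-2}$. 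For $-p^{r}+p^2t_1^2$ with $p\nmid t_1$ and $r\ge 3$, one has $p^2\| (p^2t_1^2-p^r)$ and $p\nmid g$ in the $p$-part decomposition, so $\alpha=1$, $g$ is unchanged, and $H(|\Delta|g^2)=H(p^{-2}\cdot)$. The extra term $\bigl(1-\leg{\Delta}{p}\bigr)H(n)$ vanishes because $\Delta\equiv -bn/f^2\equiv t_1^2/(\text{square})$ is a nonzero square mod $p$. I would follow \cite[Lemma 4.1]{KP1} carefully for the bookkeeping; this is routine but the main place errors can creep in.

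The boundary terms come from $n=0$ and from $t_1=0$. The case $n=0$ means $t_1=\pm p^{r/2-1}$, so $r$ is even. There $H(0)=-\frac1{12}$ in the level-$(r-2)$ sum, while the actual term at level $r$ is $H(0)$ with $t=\pm p^{r/2}$. These contribute $-\frac1{12}p^{rk/2}(p-1)(\delta_{p^{r/2}\equiv m}+(-1)^k\delta_{-p^{r/2}\equiv m})$ after accounting for the factor $p^{k+1}$ versus $p^k$. The case $t_1=0$ with $r$ odd contributes $H(p^r/b)$ only when $k=0$, $M\mid m$; iterating \Cref{lem:delta1} leaves the residual $H(p)$ when $b=1$. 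The main obstacle is getting these boundary corrections exactly right, especially the $-\frac1{12}(p-1)$ coefficient, which I would verify directly from $H(0)=-\frac1{12}$.
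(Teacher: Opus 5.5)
Your overall route matches the paper's: you remove the terms with $p\mid t$, substitute $t=pt_1$, compare $H(p^2n)$ with $pH(n)$ for $n=\frac{p^{r-2}-t_1^2}{b}$, and treat $t_1=0$ and $t_1=\pm p^{\frac{r-2}{2}}$ separately. Your case (1) and both boundary contributions, including the coefficient $-\frac{1}{12}(p-1)$, are correct.

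The gap is in the interior terms $0<|t_1|<p^{\frac{r-2}{2}}$. You show the correction term vanishes only when $p\nmid t_1$. For $p\mid t_1$ you assert that the correction terms ``telescope''. That is not what happens, and you do not prove it.

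The confusion comes from the sign. Write $-n=\Delta f^2$ with $f=p^a g$ and $p\nmid g$. Since $p$ is odd, $D_p=\Delta g^2$, so \Cref{lem:delta2} gives
\[
H(p^2n)=pH(n)+\Bigl(1-\leg{D_p}{p}\Bigr)H(|D_p|).
\]
The ``$+$'' in the displayed statement of \Cref{lem:delta1} is inconsistent with \Cref{lem:delta2}. For example, with $D=-3$ and $p=5$ the correct value is $H(75)=\frac{7}{3}$, which requires the minus sign. The paper's own proof of this lemma uses $1-\leg{D_p}{p}$.

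With the correct sign, every interior term vanishes in the same way, and no telescoping is needed. Let $\beta:=\ord_p(t_1)$. From $p^\beta\le|t_1|<p^{\frac{r-2}{2}}$ you get $2\beta<r-2$. Hence $\ord_p(t_1^2-p^{r-2})=2\beta$, so $\alpha=\beta$ and
\[
D_p=\frac{(t_1/p^\beta)^2-p^{r-2-2\beta}}{b}\equiv\frac{(t_1/p^\beta)^2}{b}\pmod p .
\]
This is a nonzero square mod $p$, because $b$ is a square prime to $p$. Therefore $\leg{D_p}{p}=1$ and $H(p^2n)=pH(n)$. The identity also holds trivially when $n\notin\Z$ or $-n$ is not a discriminant, since then both sides are $0$.

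This uniform valuation argument is exactly the paper's step, and with it your proof is complete. One further correction: apply the relation to $D=-n$, not to $-bn$, because $H(p^2n)$ is the quantity you need. Since $b$ is a square, this does not change $\Delta$ or the conclusion.
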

\begin{proof}
We have
\begin{equation}
\label{eq:Hp1}
	\mathscr{H}_{k,m,M,b}(p^r) = H_{k,m,M,b}(p^r)-\sum_{\substack{t \equiv m \pmod{M}\\p\mid t}}H\left(\frac{p^r-t^2}{b}\right)t^k.
\end{equation}
We start by proving (1). If $ r = 1 $, then only the term for $ t = 0 $ can appear in the sum. This term contributes $ -H\left(\frac{p}{b}\right) $ only if $ M \mid m $ and $ k = 0 $. If $ p \mid M $ and $ p \nmid m $, then the sum is empty and $ \delta_{M \mid m} = 0 $.

Next, we prove (2). Assume $ r \geq 2 $. Letting $ t \mapsto pt $, we get
\begin{align*}
	\sum_{\substack{t \equiv m \pmod{M}\\p \mid t}}H\left(\frac{p^r-t^2}{b}\right)t^k& = p^k\sum_{pt \equiv m \pmod{M}}H\left(\frac{p^{r-2}-t^2}{b}p^2\right)t^k\\
	& = p^k\sum_{t \equiv m\bar{p} \pmod{M}}H\left(\frac{p^{r-2}-t^2}{b}p^2\right)t^k.
\end{align*}

Let $ D := \frac{t^2-p^{r-2}}{b} $. If $ D $ is a negative integer, then we set $ D_p := \frac{D}{p^{2\alpha}} $, where $ \alpha \in \Z_{\geq 0} $ is such that $ 2\alpha \leq \ord_p(D) \leq 2\alpha+1 $. First, consider the case $ 0 < |t| < p^{\frac{r-2}{2}} $. Then $ \ord_p(D) = \ord_p(t^2-p^{r-2}) = \ord(t^2) $, hence $ \alpha = \ord_p(t) $ and
\[
	H\left(\frac{p^{r-2}-t^2}{b}p^2\right) = pH\left(\frac{p^{r-2}-t^2}{b}\right)+\left(1-\leg{D_p}{p}\right)H\left(|D_p|\right)
\]
by \Cref{lem:delta1}. We have
\[
	D_p = \frac{t^2-p^{r-2}}{bp^{2\alpha}} = \frac{\frac{t^2}{p^{2\alpha}}-p^{r-2-2\alpha}}{b},
\]
hence $ \leg{D_p}{p} = 1 $, where we used $ 2\alpha < r-2 $ because $ 0<|t|<p^{\frac{r-2}{2}} $ and the fact that $ b $ is a square. Thus,
\begin{equation}
\label{eq:Hp2}
	H\left(\frac{p^{r-2}-t^2}{b}p^2\right) = pH\left(\frac{p^{r-2}-t^2}{b}\right).
\end{equation}
We note that this equality remains valid when $ D $ is not an integer. Secondly, consider the case $ t = 0 $ and $ b = 1 $. Then $ \alpha = \frac{r-2}{2} $ if $ 2 \mid r $ and $ \alpha = \frac{r-3}{2} $ if $ 2 \nmid r $, hence
\[
	H\left(p^r\right) = pH\left(p^{r-2}\right)+\delta_{2\mid r}\left(1-\leg{-1}{p}\right)H(1)+\delta_{2\nmid r}H(p) = pH\left(p^{r-2}\right)+\delta_{2\nmid r}H(p),
\]
where we used $ H(1) = 0 $. We also have
\begin{equation}
\label{eq:Hp3}
	H\left(\frac{p^r}{b}\right) = pH\left(\frac{p^{r-2}}{b}\right)+\delta_{2\nmid r}H\left(\frac{p}{b}\right)
\end{equation}
because both sides are $ 0 $ if $ b > 1 $ due to the assumption $ p \nmid b $.

From \eqref{eq:Hp2} and \eqref{eq:Hp3},
\begin{align*}
	&\sum_{t \equiv m\bar{p} \pmod{M}}H\left(\frac{p^{r-2}-t^2}{b}p^2\right)t^k\\
	= &\delta_{M \mid m}\delta_{k=0}H\left(\frac{p^r}{b}\right)+\sum_{\substack{t\equiv m\bar{p}\pmod{M}\\0<|t|<p^{\frac{r-2}{2}}}}H\left(\frac{p^{r-2}-t^2}{b}p^2\right)t^k+\sum_{\substack{t \equiv m\bar{p} \pmod{M}\\t = \pm p^{\frac{r-2}{2}}}}H(0)t^k\\
	= &\delta_{M \mid m}\delta_{k=0}\left(pH\left(\frac{p^{r-2}}{b}\right)+\delta_{2\nmid r}H\left(\frac{p}{b}\right)\right)+\sum_{\substack{t \equiv m\bar{p} \pmod{M}\\0<|t|<p^{\frac{r-2}{2}}}}pH\left(\frac{p^{r-2}-t^2}{b}\right)t^k\\
	&+\delta_{2\mid r}p^{\frac{(r-2)k}{2}}H(0)\left(\delta_{p^{\frac{r}{2}}\equiv m \pmod{M}}+(-1)^k\delta_{-p^{\frac{r}{2}}\equiv m \pmod{M}}\right)\\
	= &\sum_{t \equiv m\bar{p} \pmod{M}}pH\left(\frac{p^{r-2}-t^2}{b}\right)t^k+\delta_{M \mid m}\delta_{k=0}\delta_{2\nmid r}H\left(\frac{p}{b}\right)\\
	&-\delta_{2\mid r}p^{\frac{(r-2)k}{2}}H(0)(p-1)\left(\delta_{p^{\frac{r}{2}} \equiv m \pmod{M}}+(-1)^k\delta_{-p^{\frac{r}{2}} \equiv m \pmod{M}}\right).
\end{align*}
The formula in (2) now follows directly from \eqref{eq:Hp1} together with $ H(0) = -\frac{1}{12} $.
\end{proof}

In order to compute the moments in the Legendre family, we will need to evaluate the even moments of Hurwitz class numbers for $ M = 2 $ and the odd moments for $ M = 4 $. Some useful properties of the coefficients $ \lambda_{\ell,m,M,b}(n) $ for these cases are collected in the next two lemmas.

\begin{lemma}
\label{lem:lambda}
Let $ p \geq 3 $ be a prime and $ r \in \Z_{\geq 1} $.
\begin{enumerate}
\item If $ \ell \in \Z_{\geq 0} $ is even and $ m \in \{0,1\} $, then
\[
	\lambda_{\ell,m,2,1}(p^r)-p^{\ell+1}\lambda_{\ell,m,2,1}\left(p^{r-2}\right) = 2\delta_{p^r \equiv 2m-1 \pmod{4}}.
\]
Moreover, $ \lambda_{\ell,m,2,1}(n) = 0 $ for every $ n \equiv 2m+1 \pmod{4} $.
\item If $ \ell \in \Z_{\geq 1} $ is odd, $ m \in \{1,3\} $, and $ b \in \{1,4\} $, then
\[
	\lambda_{\ell,m,4,b}(p^r)-p^{\ell+1}\lambda_{\ell,\bar{p}m,4,b}\left(p^{r-2}\right) = \delta_{p^r \equiv 1 \pmod{4}}\leg{-1}{m}\leg{2}{p^r},
\]
where $ \bar{p} $ denotes the multiplicative inverse of $ p $ modulo $ 4 $.
\end{enumerate}
The terms $ \lambda_{\ell,m,2,1}\left(p^{r-2}\right) $ and $ \lambda_{\ell,\bar{p}m,4,b}\left(p^{r-2}\right) $ are interpreted as $ 0 $ for $ r = 1 $.
\end{lemma}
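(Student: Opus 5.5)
The plan is to pass to the divisor-sum description of \Cref{lem:alt}. This applies because in both parts $b$ is a square: $b=1$ (so $b_1=1$) in (1), and $b\in\{1,4\}$ (so $b_1\in\{1,2\}$) in (2). For $n=p^r$ the admissible divisors are $d=p^j$ with $0\le j\le r/2$, paired with $\frac{n}{d}=p^{r-j}$. The idea is to split off the term $j=0$ and to match every term with $j\ge 1$ to a term in the divisor sum for $\lambda(p^{r-2})$ via $d=pd'$. Here $d'=p^{j-1}$ runs over the divisors of $p^{r-2}$ with $d'\le p^{(r-2)/2}$, and $\frac{n}{d}=p\cdot\frac{p^{r-2}}{d'}$. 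Under this matching $d^{\ell+1}=p^{\ell+1}d'^{\ell+1}$, which produces the factor $p^{\ell+1}$. The weight-$\frac12$ term $d=\sqrt{n}$ (when $r$ is even) corresponds exactly to $d'=\sqrt{p^{r-2}}$, which carries the same weight. For $r=1$ there are no terms with $j\ge 1$, consistent with the convention that $\lambda(p^{r-2})=0$.

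Next I check that the congruence conditions transform correctly. With $x:=\frac{p^{r-2}}{d'}\pm d'$, the conditions on $(n,d)$ read $px\equiv \varepsilon 2m \pmod{2M}$ and $px\equiv 0\pmod{2b_1}$. Since $p$ is odd it is invertible modulo $2M$ and $2b_1$, so these are equivalent to $x\equiv \varepsilon 2m\bar p\pmod{2M}$ and $x\equiv 0\pmod{2b_1}$. For $M=4$ the residue $2m\bar p \bmod 8$ depends only on $\bar p \bmod 4$, which gives the shift $m\mapsto \bar p m$ in (2). For $M=2$ we have $2m\bar p\equiv 2m\pmod 4$ (and also $2m\equiv -2m$), so the index $m$ is unchanged in (1). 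Hence the difference in each statement equals the contribution of $d=1$ alone, namely
\[
\sum_{\varepsilon\in\{\pm1\}}\varepsilon^\ell\,\delta_{p^r+1\equiv \varepsilon 2m \pmod{2M}}\,\delta_{p^r-1\equiv 0\pmod{2b_1}}.
\]

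It remains to evaluate this term case by case. In (1), $\ell$ is even and the condition $2\mid p^r-1$ holds automatically. Moreover $\varepsilon 2m\equiv 2m \pmod 4$, so both values of $\varepsilon$ contribute equally and the term is $2\delta_{p^r\equiv 2m-1\pmod 4}$. In (2), $\ell$ is odd, so the term equals $\delta_{p^r+1\equiv 2m\pmod 8}-\delta_{p^r+1\equiv -2m\pmod 8}$. The condition modulo $2b_1$ is harmless: if $b=4$ it forces $p^r\equiv1\pmod 4$, and this is already forced because $\pm2m\equiv 2,6\pmod 8$ cannot equal $p^r+1\in\{0,4\}\pmod 8$ when $p^r\equiv 3\pmod 4$. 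Checking $p^r\equiv 1$ and $p^r\equiv 5\pmod 8$ against $m\in\{1,3\}$ gives $\leg{-1}{m}\leg{2}{p^r}$.

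For the vanishing claim in (1), I would argue directly from \Cref{lem:alt}. If $n\equiv 2m+1\pmod 4$, then $n$ is odd, so every divisor $d$ is odd. The condition $d+\frac{n}{d}\equiv 2m\pmod 4$ then gives $n=d\cdot\frac{n}{d}\equiv d(2m-d)\equiv 2m-1\pmod 4$, which contradicts $n\equiv 2m+1\pmod 4$. So no divisor qualifies and the sum is empty.

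The only real subtlety is the bookkeeping of the $\bar p$-shift modulo $8$ versus modulo $4$, together with the half-weighted middle terms. I expect both to be routine once the divisor-sum form is in place.
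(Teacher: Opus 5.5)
Your proof is correct and follows essentially the same route as the paper: pass to the divisor-sum form of \Cref{lem:alt}, substitute $d=pd'$ so that every term of $\lambda(p^r)$ except $d=1$ cancels against $p^{\ell+1}\lambda(p^{r-2})$, and evaluate the $d=1$ term modulo $8$. The differences are cosmetic. The paper first rewrites the congruence as $d\equiv\pm\varepsilon m \pmod 4$, depending on $p^r \bmod 8$, before shifting, whereas you shift uniformly using that $p$ is invertible modulo $2M$; you also write out part (1) and the vanishing claim, which the paper's written proof omits.
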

\begin{proof}
If $ b = b_1^2 $, then by \Cref{lem:alt},
\[
	\lambda_{\ell,m,M,b}(p^r) = \sum_{\varepsilon\in\{\pm 1\}}\varepsilon^\ell\sideset{}{^*}\sum_{\substack{d|p^r, d\leq \sqrt{p^r}\\\frac{p^r}{d}+d\equiv \varepsilon 2m \pmod{2M}\\\frac{p^r}{d}-d\equiv 0 \pmod{2b_1}}}d^{\ell+1}.
\]

%%%%%%%%%%%%%%%%%%%%%%%%%%%%%%%%%%%%%%%%%%%%%%%%%%%%%%%%%%%%%%%%%%%%%
%To prove (1), we note that if $ p^r \equiv 2m+1 \pmod{4} $, then $ \frac{p^r}{d}+d \equiv (2m+2)d \pmod{4} $ for $ d \mid p^r $, hence the condition $ \frac{p^r}{d}+d \equiv \varepsilon 2m \pmod{4} $ is equivalent to $ (m+1)d \equiv m \pmod{2} $, which is never satisfied. On the other hand, if $ p^r \equiv 2m-1 \pmod{4} $, then $ \frac{p^r}{d}+d \equiv 2md \pmod{4} $, hence $ \frac{p^r}{d}+d \equiv \varepsilon 2m \pmod{4} $ is always satisfied. Since $ \ell $ is even, we get
%\[
%	\lambda_{\ell,m,2,1}(p^r)-p^{\ell+1}\lambda_{\ell,m,2,1}\left(p^{r-2}\right) = 2\left(\sideset{}{^*}\sum_{d|p^r, d\leq \sqrt{p^r}}d^{\ell+1}-\sideset{}{^*}\sum_{d|p^{r-2}, d\leq \sqrt{p^{r-1}}}d^{\ell+1}\right) = 2.
%\]
%%%%%%%%%%%%%%%%%%%%%%%%%%%%%%%%%%%%%%%%%%%%%%%%%%%%%%%%%%%%%%%%%%%%%

Let us prove (2). If $ p^r \equiv 3 \pmod{4} $, then $ \frac{p^r}{d}+d \equiv 0 \pmod{4} $ but $ \varepsilon 2m \equiv 2 \pmod{4} $. It follows that $ \lambda_{\ell,m,4,b}(p^r) = \lambda_{\ell,m,4,b}\left(p^{r-2}\right) = 0 $. Next, we assume $ p^r \equiv 1 \pmod{4} $. Since $ b_1 \in \{1,2\} $, the condition $ \frac{p^r}{d}-d \equiv 0 \pmod{2b_1} $ is automatically satisfied. If $ p^r \equiv 1 \pmod{8} $, then
\[
	\lambda_{\ell,m,4,b}(p^r)-\lambda_{\ell,\bar{p}m,4,b}\left(p^{r-2}\right) = \sum_{\varepsilon\in\{\pm 1\}}\varepsilon\left(\sideset{}{^*}\sum_{\substack{d|p^r, d\leq \sqrt{p^r}\\d\equiv \varepsilon m \pmod{4}}}d^{\ell+1}-\sideset{}{^*}\sum_{\substack{d|p^{r-2}, d\leq \sqrt{p^{r-2}}\\d\equiv \varepsilon \bar{p}m \pmod{4}}}d^{\ell+1}\right).
\]
Letting $ pd \mapsto d $ in the second inner sum, we get
\[
	\lambda_{\ell,m,4,b}(p^r)-\lambda_{\ell,\bar{p}m,4,b}\left(p^{r-2}\right) = \sum_{\varepsilon\in\{\pm 1\}}\varepsilon \delta_{1 \equiv \varepsilon m \pmod{4}} = \leg{-1}{m}.
\]
The case $ p^r \equiv 5 \pmod{8} $ is analogous.
\end{proof}

\subsection{Zeroth moment}

Let us review some basic facts about quasimodular forms. It is well known that the generating function
\[
	\calD(\tau) := \sum_{n=1}^{\infty}\sigma(n)q^n,
\]
where $ \sigma(n) := \sum_{d\mid n}d $, is a quasimodular form of weight $ 2 $ on $ \Sl_2(\Z) $. In fact, if
\[
	\widehat{\calD}(\tau) := \calD(\tau)-\frac{1}{24}+\frac{1}{8\pi v},
\]
where $ v = \Im(\tau) $, then $ \widehat{\calD}(\tau) $ satisfies weight $ 2 $ modularity on $ \Sl_2(\Z) $, see \cite[p. 113]{Ko}.

\Cref{lem:S} captures how the sieving operator $ S_{M,m} $ changes the level of a holomorphic modular form. We point out that the lemma can be easily extended to quasimodular forms because the space of quasimodular forms of weight $ 2 $ on $ \Gamma_{N_0,N_1} $ is spanned by $ \calD $ together with a basis of $ M_2(\Gamma_{N_0,N_1}) $. In particular, if $ M \mid 24 $ and $ M \nmid m $, then $ \calD\vert S_{M,m} \in M_2(\Gamma_0(M^2)) $.

Next, we explicitly compute the sum of the Hurwitz class numbers, i.e., the zeroth moment, for $ M = 2 $.

\begin{lemma}
\label{lem:H0}
Let $ m \in \{0,1\} $. If $ n \in \Z_{\geq 1} $ is odd, then
\[
	H_{0,m,2,1}(n) = \begin{cases}
		\frac{1}{3}\sigma(n)-\frac{1}{2}\lambda_{0,m,2,1}(n)&\text{if $ n \equiv 2m-1 \pmod{4} $},\\
		0&\text{if $ n \equiv 2m+1 \pmod{4} $.}
	\end{cases}
\]
\end{lemma}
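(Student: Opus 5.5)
The plan is to apply the holomorphic projection machinery with $k=0$, $b=1$, $M=2$. By \Cref{prop:holH} and \Cref{def:gamma}, the function
\[
	f_{0,m,2,1} = \calH\cdot\theta_{m,2}+\tfrac{1}{2}\Lambda_{0,m,2,1}
\]
is quasimodular of weight $2$. For $m=1$ it lives on $\Gamma_{16,2}=\Gamma_0(16)$, since $\Gamma_1(2)=\Gamma_0(2)$. For $m=0$ it lives on $\Gamma_0(16)$ as well. By \Cref{lem:rec} with $k=0$, its $n$-th coefficient is $H_{0,m,2,1}(n)+\frac12\lambda_{0,m,2,1}(n)$. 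The content of the lemma is therefore that the odd-indexed part of $f_{0,m,2,1}$ equals $\frac13\calD\vert S_{4,2m-1}$.

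First I would dispose of the vanishing case, which is elementary. If $n\equiv 2m+1 \pmod 4$ and $t\equiv m\pmod 2$, then $n-t^2\equiv 2m+1-m \equiv m+1 \pmod 4$, using $t^2\equiv m\pmod 4$. This is $\equiv 1$ or $2\pmod 4$, so $H(n-t^2)=0$ and hence $H_{0,m,2,1}(n)=0$. In that same residue class, \Cref{lem:lambda}(1) gives $\lambda_{0,m,2,1}(n)=0$.

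For $n\equiv 2m-1\pmod 4$, I would apply the sieving operator $S_{4,2m-1}$ to $f_{0,m,2,1}$. By \Cref{lem:S}, extended to quasimodular forms as remarked above, the result $g_m:=f_{0,m,2,1}\vert S_{4,2m-1}$ is quasimodular of weight $2$ on $\Gamma_0(64)$. Likewise $\calD\vert S_{4,2m-1}$ lies in $M_2(\Gamma_0(16))$, since $4\mid 24$ and $4\nmid 2m-1$. The key structural point is that $g_m-\frac13\calD\vert S_{4,2m-1}$ is an honest holomorphic modular form. To see this, I would compare the non-holomorphic completions: the $\frac{1}{8\pi v}$ term in $\widehat{\calD}$ must match the constant-times-$v^{-1}$ term produced by the $[v^{-1/2},\theta]$ piece in the completion of $\calH\theta_{m,2}$, and sieving preserves this matching with the same factor. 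Alternatively, one can simply note that the quasimodular space is spanned by $\calD$ together with modular forms, and determine the $\calD$-coefficient from a single coefficient.

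It then remains to check that $g_m-\frac13\calD\vert S_{4,2m-1}$ vanishes, using the Sturm bound on $\Gamma_0(64)$ (index $96$, bound $16$). Concretely, I would verify
\[
	H_{0,m,2,1}(n)+\tfrac12\lambda_{0,m,2,1}(n)=\tfrac13\sigma(n)
\]
for the finitely many $n\le 16$ with $n\equiv 2m-1\pmod 4$, computing $\lambda$ via \Cref{lem:alt}. The main obstacle is pinning down the exact quasimodular level and justifying the extension of \Cref{lem:S} to this setting. If that becomes delicate, the fallback is the classical Kronecker--Hurwitz relation $\sum_t H(4n-t^2)=2\sigma(n)-\sum_{d\mid n}\min(d,n/d)$, combined with \Cref{lem:delta2}-type relations to pass from $4n$ to $n$ and to split the sum by the parity of $t$.
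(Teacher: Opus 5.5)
Your proposal is correct and is essentially the paper's argument. The paper likewise takes the quasimodular form $\calH_{0,m,2,1}+\frac12\Lambda_{0,m,2,1}$ from holomorphic projection and isolates its odd coefficients, via $\otimes\chi_{-4}^2$ rather than your $S_{4,2m-1}$. It then checks by the Sturm bound that the result equals $\frac16\calD\otimes\chi_{-4}^2+(-1)^{m+1}\frac16\calD\otimes\chi_{-4}$, which is exactly your $\frac13\calD\vert S_{4,2m-1}$. It deduces the vanishing case from $\lambda_{0,m,2,1}(n)=0$, whereas you use an equally valid congruence argument.

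One simplification: no matching of non-holomorphic completions is needed. Sieving to odd $n$, or to a class $\not\equiv 0 \pmod 4$, already kills the $\frac{1}{8\pi v}$ term, so both sides are genuine modular forms on $\Gamma_0(16)$ by the $M\mid 24$ case of the sieving lemma. The Sturm bound is therefore $4$ rather than $16$.
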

\begin{proof}
Let $ \chi_{-4} $ denote the non-principal character modulo $ 4 $. If we show that
\begin{equation}
\label{eq:H0}
	\left(\calH_{0,m,2,1}+\frac{1}{2}\Lambda_{0,m,2,1}\right)\otimes\chi_{-4}^2 = \frac{1}{6}\calD\otimes\chi_{-4}^2+(-1)^{m+1}\frac{1}{6}\calD\otimes\chi_{-4},
\end{equation}
then the lemma immediately follows from the fact that $ \lambda_{\ell,m,2,1}(n) = 0 $ if $ n \equiv 2m+1 \pmod{4} $ by \Cref{lem:lambda}.

We recall that the Rankin-Cohen bracket $ [\calH, \theta_{m,M}]_0 $ is simply the product $ \calH\theta_{m,M} $. If $ m \in \{0, 1\} $, then by \Cref{prop:holH},
\[
	\pihol\left(\widehat{\calH}\theta_{m,2}\right) = \calH\theta_{m,2}+\frac{1}{2}\Lambda_{0,m,2,1} = \calH_{0,m,2,1}+\frac{1}{2}\Lambda_{0,m,2,1}
\]
is a quasimodular form of weight $ 2 $ on $ \Gamma_0(16)\cap\Gamma_1(2) = \Gamma_0(16) $. Both the left-hand side and the right-hand side of \eqref{eq:H0} are modular forms in $ M_2(\Gamma_0(16)) $ and we have $ [\Sl_2(\Z):\Gamma_0(16)] = 24 $ (a formula for the index can be found, e.g., in \cite[Lemma 2.2]{BK2}). It follows from the Sturm bound that \eqref{eq:H0} holds if the coefficients of $ q^n $ on both sides agree for $ 0 \leq n \leq 4 $, which can be easily checked.
\end{proof}

\subsection{Higher moments}

In the next two theorems, we find relatively simple expressions for the moments with the added condition $ p \nmid t $ in the case $ M = 2 $ and $ M = 4 $.

\begin{theorem}
\label{thm:Hp2}
Let $ k \in \Z_{\geq 1} $ be even, $ m \in \{0,1\} $, $ p \geq 3 $ be a prime and $ r \in \Z_{\geq 1} $. If $ p^r \equiv 2m-1 \pmod{4} $, then
\begin{multline*}
	\mathscr{H}_{k,m,2,1}(p^r) = \frac{C_{\frac{k}{2}}}{3\cdot 2^k}\left(p^{r\left(\frac{k}{2}+1\right)}-2p^{\frac{rk}{2}}\right)-\delta_{2 \mid r}\frac{1}{6}p^{\frac{rk}{2}}(p-1)-2^{-k}\sum_{\mu=0}^{\frac{k}{2}-1}T(k,\mu)p^{r\mu}\\
	+\sum_{\mu=0}^{\frac{k}{2}-1}2^{-2\mu}T(k,\mu)\left(\alpha_{k-2\mu,m,2,1}(p^r)-p^{k-2\mu+1}\alpha_{k-2\mu,m,2,1}(p^{r-2})\right)p^{r\mu},
\end{multline*}
where we take $ \alpha_{k-2\mu,m,2,1}(p^{r-2}) = 0 $ in the case $ r = 1 $. If $ p^r \equiv 2m+1 \pmod{4} $, then $ \mathscr{H}_{k,m,2,1}(p^r) = 0 $.
\end{theorem}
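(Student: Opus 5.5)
The plan is to combine \Cref{lem:Hp}, \Cref{prop:Hk}, \Cref{lem:lambda}(1) and \Cref{lem:H0}, all with $M=2$, $b=1$.

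\textbf{Vanishing case.} First dispose of the case $p^r \equiv 2m+1 \pmod 4$ directly, term by term.
\begin{itemize}
\item If $m=0$, then $t$ is even and $p^r\equiv 1\pmod 4$, so $p^r-t^2\equiv 1\pmod 4$.
\item If $m=1$, then $t$ is odd and $p^r\equiv 3\pmod 4$, so $p^r-t^2\equiv 2\pmod 4$.
\end{itemize}
In either case every summand $H(p^r-t^2)$ vanishes, so $\mathscr{H}_{k,m,2,1}(p^r)=0$.

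\textbf{Main case, reduction.} Now assume $p^r\equiv 2m-1\pmod 4$. Since $p$ is odd, $\bar p\equiv 1\pmod 2$, so $m\bar p\equiv m$. Because $k\geq 1$, all $\delta_{k=0}$ terms drop out. \Cref{lem:Hp} then gives
\[
\mathscr{H}_{k,m,2,1}(p^r)=H_{k,m,2,1}(p^r)-p^{k+1}H_{k,m,2,1}(p^{r-2})-\delta_{2\mid r}\tfrac{1}{12}p^{\frac{rk}{2}}(p-1)\bigl(\delta_{p^{r/2}\equiv m}+\delta_{-p^{r/2}\equiv m}\bigr).
\]
Here the middle term is absent when $r=1$, by part (1). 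When $r$ is even we have $p^r\equiv 1\pmod 4$, which forces $m=1$. Both congruences mod $2$ then hold, and the last term becomes $-\delta_{2\mid r}\frac16 p^{rk/2}(p-1)$, as claimed.

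\textbf{Main case, the two moments.} Expand $H_{k,m,2,1}(p^r)$ and $H_{k,m,2,1}(p^{r-2})$ using \Cref{prop:Hk}. The key bookkeeping is
\[
p^{k+1}\,p^{(r-2)\mu}=p^{k-2\mu+1}\,p^{r\mu}.
\]
So the $\mu$-th term of the difference is
\[
2^{-2\mu}T(k,\mu)\,p^{r\mu}\Bigl[(\alpha(p^r)-p^{k-2\mu+1}\alpha(p^{r-2})) - 2^{-1-k+2\mu}(\lambda(p^r)-p^{k-2\mu+1}\lambda(p^{r-2}))\Bigr],
\]
with subscripts $k-2\mu,m,2,1$ throughout. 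Since $k-2\mu$ is even, \Cref{lem:lambda}(1) makes the $\lambda$-difference equal to $2$. This produces exactly $-2^{-k}T(k,\mu)p^{r\mu}$, and summing over $\mu$ gives the middle sum in the statement.

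\textbf{Main case, the Catalan term.} The leading term becomes
\[
C_{k/2}\,2^{-k}\,p^{rk/2}\bigl(H_{0,m,2,1}(p^r)-pH_{0,m,2,1}(p^{r-2})\bigr).
\]
Apply \Cref{lem:H0} to both values; note $p^{r-2}\equiv p^r\pmod 4$, so the same case of the lemma applies. Two telescoping identities finish it:
\begin{itemize}
\item $\sigma(p^r)-p\sigma(p^{r-2})=1+p^r$;
\item by \Cref{lem:lambda}(1) with $\ell=0$, $\lambda_{0}(p^r)-p\lambda_0(p^{r-2})=2$.
\end{itemize}
The bracket is therefore $\frac13(p^r+1)-1=\frac13(p^r-2)$. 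This yields $\frac{C_{k/2}}{3\cdot 2^k}\bigl(p^{r(k/2+1)}-2p^{rk/2}\bigr)$.

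\textbf{Expected obstacle.} The main difficulty is bookkeeping at the edges. For $r=1$, every $p^{r-2}$ term is set to zero: check that $\sigma(p)=1+p$ and $\lambda_0(p)=2$ give the same result, consistent with the conventions in \Cref{lem:lambda}. For $r=2$, \Cref{prop:Hk} and \Cref{lem:H0} are evaluated at $n=p^0=1$; check that this is legitimate, which it is since $n=1$ is odd and $\equiv 2m-1\pmod 4$ with $m=1$.
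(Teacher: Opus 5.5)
Your proof is correct and follows the paper's argument essentially step for step. You use \Cref{lem:Hp} with $M=2$, expand with \Cref{prop:Hk}, get the $\lambda$-telescoping (difference $=2$) from \Cref{lem:lambda}(1), and apply \Cref{lem:H0} to obtain $H_{0,m,2,1}(p^r)-pH_{0,m,2,1}(p^{r-2})=\frac{1}{3}(p^r-2)$; the only cosmetic difference is that you settle the case $p^r\equiv 2m+1\pmod 4$ directly from the definition of $\mathscr{H}_{k,m,2,1}$ rather than through the reduction formula, which rests on the same mod-$4$ obstruction.
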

\begin{proof}
We will prove this for $ r \geq 2 $. The proof for $ r = 1 $ is obtained by interpreting $ H_{k,m,2,1}(p^{-1}) $ and $ \lambda_{\ell,m,2,1}(p^{-1}) $ in the following calculation as zero. By \Cref{lem:Hp} with $ M = 2 $, we have
\begin{multline}
\label{eq:HpM2}
	\mathscr{H}_{k,m,2,1}(p^r) = H_{k,m,2,1}(p^r)-p^{k+1}H_{k,m,2,1}(p^{r-2})\\
	-\delta_{2 \mid r}\frac{1}{12}p^{\frac{rk}{2}}(p-1)\left(\delta_{p^{\frac{r}{2}}\equiv m \pmod{2}}+\delta_{p^{\frac{r}{2}}\equiv m \pmod{2}}\right).
\end{multline}

First, assume that $ p^r \equiv 2m-1 \pmod{4} $. The last term in \eqref{eq:HpM2} simplifies to
\[
	-\delta_{2 \mid r}\frac{1}{6}p^{\frac{rk}{2}}(p-1).
\]
We use the formula
\begin{multline*}
	H_{k,m,2,1}(n) = C_{\frac{k}{2}}2^{-k}n^{k/2}H_{0,m,2,1}(n)\\
	+\sum_{\mu = 0}^{\frac{k}{2}-1}2^{-2\mu}T(k,\mu)\left(\alpha_{k-2\mu,m,2,1}(n)-2^{2\mu-k-1}\lambda_{k-2\mu,m,2,1}(n)\right)n^{\mu}
\end{multline*}
from \Cref{prop:Hk}. By \Cref{lem:lambda}, we have
\[
	\lambda_{k-2\mu,m,2,1}(p^r)-p^{k-2\mu+1}\lambda_{k-2\mu,m,2,1}(p^{r-2}) = 2,
\]
and by \Cref{lem:H0}, we have
\[
	H_{0,m,2,1}(n) = \frac{1}{3}\sigma(n)-\frac{1}{2}\lambda_{0,m,2,1}(n)
\]
if $ n \equiv 2m-1 \pmod{4} $, hence
\[
	H_{0,m,2,1}(p^r)-pH_{0,m,2,1}(p^{r-2}) = \frac{1}{3}(p^r-2).
\]
Thus,
\begin{align*}
	&\mathscr{H}_{k,m,2,1}(p^r) = C_{\frac{k}{2}}2^{-k}p^{\frac{rk}{2}}\left(H_{0,m,2,1}(p^r)-pH_{0,m,2,1}(p^{r-2})\right)-\delta_{2\mid r}\frac{1}{6}p^{\frac{rk}{2}}(p-1)\\
	&+\sum_{\mu=0}^{\frac{k}{2}-1}2^{-2\mu}T(k,\mu)\left(\alpha_{k-2\mu,m,2,1}(p^r)-p^{k-2\mu+1}\alpha_{k-2\mu,m,2,1}(p^{r-2})\right)p^{r\mu}\\
	&-2^{-k-1}\sum_{\mu=0}^{\frac{k}{2}-1}T(k,\mu)\left(\lambda_{k-2\mu,m,2,1}(p^r)-p^{k-2\mu+1}\lambda_{k-2\mu,m,2,1}(p^{r-2})\right)p^{r\mu}
\end{align*}
and after using the above relations, we obtain the expression for $ \mathscr{H}_{k,m,2,1}(p^r) $ in the statement.

Next, assume that $ p^r \equiv 2m+1 \pmod{4} $. By definition,
\[
	H_{k,m,2,1}(n) = \sum_{t \equiv m \pmod{4}}H(n-t^2)t^k.
\]
If $ n \equiv 2m+1 \pmod{4} $, then $ n-t^2 \equiv 2m+1-m^2 \equiv 1,2 \pmod{4} $, hence $ H(n-t^2) = 0 $. The last term in \eqref{eq:HpM2} also equals zero, and it follows that $ \mathscr{H}_{k,m,2,1}(p^r) = 0 $.
\end{proof}

\begin{theorem}
\label{thm:Hp4}
Let $ k \in \Z_{\geq 1} $ be odd, $ m \in \{1,3\} $, $ b \in \{1,4\} $, $ p \geq 3 $ be a prime and $ r \in \Z_{\geq 1} $. If $ p^r \equiv 1 \pmod{4} $, then
\begin{multline*}
	\mathscr{H}_{k,m,4,b}(p^r) = \sum_{\mu=0}^{(k-1)/2}2^{-2\mu}T(k,\mu)\left(\alpha_{k-2\mu,m,4,b}(p^r)-p^{k-2\mu+1}\alpha_{k-2\mu,m\bar{p},4,b}(p^{r-2})\right)p^{r\mu}\\
	-b^{-\frac{1}{2}}\leg{-1}{m}\leg{2}{p^r}2^{-k-1}\sum_{\mu=0}^{(k-1)/2} T(k,\mu)p^{r\mu}-\delta_{2 \mid r}\frac{1}{12}p^{\frac{rk}{2}}(p-1)\leg{-1}{m}\varepsilon(p^r),
\end{multline*}
where $ \varepsilon(p^r) = \leg{-1}{p^{r/2}} $ and where we take $ \alpha_{k-2\mu,m,4,b}(p^{-1}) = 0 $ in the case $ r = 1 $. If $ p^r \equiv 3 \pmod{4} $, then $ \mathscr{H}_{k,m,4,b}(p^r) = 0 $.
\end{theorem}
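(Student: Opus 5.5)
The plan follows the proof of \Cref{thm:Hp2}, with \Cref{lem:Hp} applied for $ M = 4 $. Since $ p $ is odd, $ p \nmid M $, and since $ m \in \{1,3\} $ we have $ M \nmid m $. So all $ \delta_{M\mid m} $ terms vanish, and for $ r \geq 2 $ \Cref{lem:Hp} (2) gives
\[
	\mathscr{H}_{k,m,4,b}(p^r) = H_{k,m,4,b}(p^r)-p^{k+1}H_{k,m\bar{p},4,b}(p^{r-2})-\delta_{2\mid r}\frac{1}{12}p^{\frac{rk}{2}}(p-1)\left(\delta_{p^{\frac{r}{2}}\equiv m \pmod{4}}-\delta_{-p^{\frac{r}{2}}\equiv m \pmod{4}}\right),
\]
using that $ k $ is odd. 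For $ r = 1 $, \Cref{lem:Hp} (1) gives simply $ \mathscr{H}_{k,m,4,b}(p) = H_{k,m,4,b}(p) $, which is the same formula with the $ p^{r-2} $ terms read as zero.

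\textbf{The supersingular term.} When $ r $ is even, $ p^{r/2} $ is odd. The bracket then equals $ 1 $ if $ p^{r/2}\equiv m $ and $ -1 $ if $ p^{r/2} \equiv -m \pmod 4 $. In both cases this is $ \leg{-1}{m}\leg{-1}{p^{r/2}} = \leg{-1}{m}\varepsilon(p^r) $, since $ \leg{-1}{\cdot} $ is the character $ \chi_{-4} $. This produces the last term of the statement.

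\textbf{Expanding the moments.} Next I substitute \Cref{prop:Hk} for both $ H_{k,m,4,b}(p^r) $ and $ H_{k,m\bar p,4,b}(p^{r-2}) $. Because $ k $ is odd, there is no Catalan/$ H_0 $ term, so no analogue of \Cref{lem:H0} is needed. Since $ n^\mu = p^{r\mu} $ and $ p^{k+1}(p^{r-2})^\mu = p^{k-2\mu+1}p^{r\mu} $, the terms pair up index by index:
\[
	\sum_{\mu=0}^{(k-1)/2}2^{-2\mu}T(k,\mu)\Big[\big(\alpha_{k-2\mu,m,4,b}(p^r)-p^{k-2\mu+1}\alpha_{k-2\mu,m\bar p,4,b}(p^{r-2})\big)
	-b^{-\frac12}2^{-1-k+2\mu}\big(\lambda_{k-2\mu,m,4,b}(p^r)-p^{k-2\mu+1}\lambda_{k-2\mu,m\bar p,4,b}(p^{r-2})\big)\Big]p^{r\mu}.
\]
Each $ k-2\mu $ is odd and $ \geq 1 $, so \Cref{lem:lambda} (2) applies. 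It replaces each $ \lambda $-difference by $ \leg{-1}{m}\leg{2}{p^r} $ when $ p^r\equiv1 \pmod 4 $, and the factor $ 2^{-2\mu}\cdot2^{2\mu-1-k} $ collapses to $ 2^{-1-k} $. This yields the middle term of the statement. For $ r=1 $ the lemma explicitly allows the $ p^{r-2} $ terms to be zero, so the same computation goes through.

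\textbf{The case $ p^r\equiv 3 \pmod 4 $.} Here I argue directly from the definition. For odd $ t \equiv m \pmod 4 $ we have $ t^2\equiv 1 \pmod 8 $, so $ p^r-t^2\equiv 2 \pmod 4 $. If $ b=1 $ this gives $ H(p^r-t^2)=0 $. If $ b=4 $, then $ (p^r-t^2)/4 $ is not an integer and $ H $ vanishes by convention. Hence $ \mathscr{H}_{k,m,4,b}(p^r)=0 $ immediately.

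The only real care needed is bookkeeping: the residue class shifts from $ m $ to $ m\bar p $ in the $ p^{r-2} $ term, and this must match \Cref{lem:lambda} (2) exactly. I also need the sign of the supersingular term to come out right. Neither step should be a serious obstacle.
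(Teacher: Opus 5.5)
Your proposal is correct and follows the paper's proof essentially step for step. Like the paper, you apply \Cref{lem:Hp} with $M=4$, identify the supersingular bracket as $\leg{-1}{m}\varepsilon(p^r)$, expand via \Cref{prop:Hk}, and collapse the $\lambda$-differences with \Cref{lem:lambda}~(2), while in the case $p^r\equiv 3 \pmod{4}$ you apply the same congruence argument directly to the sum defining $\mathscr{H}_{k,m,4,b}(p^r)$ rather than through $H_{k,m,4,b}$ (an inessential difference).
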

\begin{proof}
By \Cref{lem:Hp} with $ M = 4 $, we have
\begin{multline*}
	\mathscr{H}_{k,m,4,b}(p^r) = H_{k,m,4,b}(p^r)-p^{k+1}H_{k,m\bar{p},4,b}(p^{r-2})\\
	-\delta_{2 \mid r}\frac{1}{12}p^{\frac{rk}{2}}(p-1)\left(\delta_{p^{\frac{r}{2}}\equiv m \pmod{4}}-\delta_{-p^{\frac{r}{2}}\equiv m \pmod{4}}\right).
\end{multline*}
First, assume that $ p^r \equiv 1 \pmod{4} $. We note that
\[
	\delta_{p^{\frac{r}{2}}\equiv m \pmod{4}}-\delta_{-p^{\frac{r}{2}} \equiv m \pmod{4}}\ = \leg{-1}{m}\varepsilon(p^r).
\]
By \Cref{lem:lambda},
\[
	\lambda_{k-2\mu,m,4,b}(p^r)-p^{k-2\mu+1}\lambda_{k-2\mu,m\bar{p},4,b}(p^{r-2}) = \leg{-1}{m}\leg{2}{p^r}.
\]
By \Cref{prop:Hk}, we have
\[
	H_{k,m,4,b}(n) = \sum_{\mu=0}^{\left\lfloor (k-1)/2 \right\rfloor}2^{-2\mu}T(k,\mu)\left(\alpha_{k-2\mu,m,4,b}(n)-b^{-\frac{1}{2}}2^{2\mu-k-1}\lambda_{k-2\mu,m,4,b}(n)\right)n^{\mu},
\]
hence
\begin{align*}
	&\mathscr{H}_{k,m,4,b}(p^r) = \sum_{\mu=0}^{(k-1)/2}2^{-2\mu}T(k,\mu)\left(\alpha_{k-2\mu,m,4,b}(p^r)-p^{k-2\mu+1}\alpha_{k-2\mu,m\bar{p},4,b}(p^{r-2})\right)p^{r\mu}\\
	&-b^{-\frac{1}{2}}2^{-k-1}\sum_{\mu=0}^{(k-1)/2} T(k,\mu)\left(\lambda_{k-2\mu,m,4,b}(p^r)-p^{k-2\mu+1}\lambda_{k-2\mu,m\bar{p},4,b}(p^{r-2})\right)p^{r\mu}\\
	&-\delta_{2 \mid r}\frac{1}{12}p^{\frac{rk}{2}}(p-1)\leg{-1}{m}\varepsilon(p^r).
\end{align*}
Using the relation above, the formula follows.

Next, assume that $ p^r \equiv 3 \pmod{4} $, in particular $ r $ is odd. By definition,
\[
	H_{k,m,4,b}(n) = \sum_{t \equiv m \pmod{4}}H\left(\frac{n-t^2}{b}\right)t^k.
\]
If $ n \equiv 3 \pmod{4} $, then $ n-t^2 \equiv 2 \pmod{4} $, hence $ H\left(\frac{n-t^2}{b}\right) = 0 $, and it follows that $ \mathscr{H}_{k,m,4,b}(p^r) = 0 $.
\end{proof}

\section{Moments of the Legendre family}
\label{sec:S}

\subsection{Proof of the main theorem}

Our main result (\Cref{thm:Main}) was proved in the introduction as a consequence of \Cref{thm:Seven,thm:Sodd} below.

\begin{theorem}
\label{thm:Seven}
Let $ p \geq 5 $ be a prime and $ r \in \Z_{\geq 1} $. Let $ m \equiv \frac{p^r+1}{2} \pmod{2} $. If $ k \in \Z_{\geq 1} $ is even, then
\begin{multline*}
	S_k^{\Leg}(p^r) = C_{\frac{k}{2}}\left(p^{r\left(\frac{k}{2}+1\right)}-2p^{\frac{rk}{2}}\right)-3\sum_{\mu=1}^{\frac{k}{2}-1}T(k,\mu)p^{r\mu}-1\\
	+3\sum_{\mu=0}^{\frac{k}{2}-1}2^{k-2\mu}T(k,\mu)\left(\alpha_{k-2\mu,m,2,1}(p^r)-p^{k-2\mu+1}\alpha_{k-2\mu,m,2,1}(p^{r-2})\right)p^{r\mu},
\end{multline*}
where we take $ \alpha_{k-2\mu,m,2,1}(p^{-1}) = 0 $ in the case $ r = 1 $.
\end{theorem}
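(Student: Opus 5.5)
Combine \Cref{prop:OSS}(1) with \Cref{thm:Hp2}. The first step is to rewrite the class number sum in \Cref{prop:OSS}(1) as one of the moments $\mathscr{H}_{k,m,2,1}(p^r)$. In
\[
	3\sum_{\substack{p\nmid t\\ t\equiv p^r+1 \pmod 4}} H\left(\frac{4p^r-t^2}{4}\right)t^k
\]
only even $t$ contribute, since $p^r+1$ is even. Substitute $t=2t_1$, so that $H\left(\frac{4p^r-t^2}{4}\right)=H(p^r-t_1^2)$ and $t^k=2^kt_1^k$. The congruence $2t_1\equiv p^r+1\pmod 4$ becomes $t_1\equiv \frac{p^r+1}{2}\equiv m\pmod 2$. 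The condition $p\nmid t$ is equivalent to $p\nmid t_1$ because $p$ is odd. Hence the sum equals $3\cdot 2^k\mathscr{H}_{k,m,2,1}(p^r)$, and
\[
	S_k^{\Leg}(p^r)=2+\delta_{2\mid r}2^{k-1}p^{\frac{rk}{2}}(p-1)+3\cdot 2^k\mathscr{H}_{k,m,2,1}(p^r).
\]

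Next I check that the hypothesis $p^r\equiv 2m-1\pmod 4$ of \Cref{thm:Hp2} always holds. If $p^r\equiv 1\pmod 4$, then $\frac{p^r+1}{2}$ is odd, so $m\equiv 1$ and $2m-1\equiv 1$. If $p^r\equiv 3\pmod 4$, then $m\equiv 0$ and $2m-1\equiv -1\equiv 3$. So the first case of \Cref{thm:Hp2} applies. Multiplying its formula by $3\cdot 2^k$ gives the following terms:
\begin{itemize}
\item the main term $C_{k/2}\left(p^{r(k/2+1)}-2p^{rk/2}\right)$;
\item the term $-\delta_{2\mid r}2^{k-1}p^{rk/2}(p-1)$, which cancels the supersingular contribution exactly;
\item the term $-3\sum_{\mu=0}^{k/2-1}T(k,\mu)p^{r\mu}$;
\item the cusp form coefficients with prefactor $3\cdot 2^{k-2\mu}T(k,\mu)$, exactly as in the statement.
\end{itemize}

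Finally, in the third item I split off the $\mu=0$ term, which is $-3T(k,0)=-3$. Together with the constant $2$ this gives $-1$, and what remains is $-3\sum_{\mu=1}^{k/2-1}T(k,\mu)p^{r\mu}$. The main obstacle is bookkeeping rather than anything conceptual: verifying the parity identification of $m$ and the cancellation of the supersingular term $2^{k-1}p^{rk/2}(p-1)$. The case $r=1$ needs nothing extra, since it is covered by the conventions already built into \Cref{thm:Hp2}.
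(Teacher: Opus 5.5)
Your proposal is correct and is exactly the paper's argument: substitute $t=2t_1$ in \Cref{prop:OSS}(1) to obtain $S_k^{\Leg}(p^r)=2+\delta_{2\mid r}2^{k-1}p^{\frac{rk}{2}}(p-1)+3\cdot 2^k\mathscr{H}_{k,m,2,1}(p^r)$, then insert the first case of \Cref{thm:Hp2}. You also write out the bookkeeping the paper leaves implicit: $p^r\equiv 2m-1 \pmod{4}$ always holds, the supersingular term cancels, and $-3T(k,0)=-3$ combines with the constant $2$ to give $-1$. All of it checks out.
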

\begin{proof}
After substituting $ t = 2t_1 $ in the sum, the formula for $ S_k^{\Leg}(p^r) $ from \Cref{prop:OSS} becomes
\[
	S_k^{\Leg}(p^r) = 2+\delta_{2 \mid r}2^{k-1}p^{\frac{rk}{2}}(p-1)+3\cdot 2^k\mathscr{H}_{k,m,2,1}(p^r),
\]
and using the expression for $ \mathscr{H}_{k,m,2,1}(p^r) $ from \Cref{thm:Hp2} completes the proof.
\end{proof}

\begin{theorem}
\label{thm:Sodd}
Let $ p \geq 5 $ be a prime and $ r \in \Z_{\geq 1} $. Assume that $ p^r \equiv 1 \pmod{4} $ and let $ m \equiv \frac{p^r+1}{2} \pmod{4} $. If $ k \in \Z_{\geq 1} $ is odd, then
\begin{align*}
	&S_k^{\Leg}(p^r) = -2\sum_{\mu=1}^{(k-1)/2} T(k,\mu)p^{r\mu}\\
	&+\sum_{\mu=0}^{(k-1)/2}2^{k-2\mu+1}T(k,\mu)\left(\alpha_{k-2\mu,m,4,1}(p^r)-p^{k-2\mu+1}\alpha_{k-2\mu,m\bar{p},4,1}(p^{r-2})\right)p^{r\mu}\\
	&+\sum_{\mu=0}^{(k-1)/2}2^{k-2\mu+2}T(k,\mu)\left(\alpha_{k-2\mu,m,4,4}(p^r)-p^{k-2\mu+1}\alpha_{k-2\mu,m\bar{p},4,4}(p^{r-2})\right)p^{r\mu},
\end{align*}
where we take $ \alpha_{k-2\mu,m\bar{p},4,b}(p^{-1}) = 0 $ for $ b \in \{1,4\} $ in the case $ r = 1 $.
\end{theorem}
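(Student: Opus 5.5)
The proof mirrors that of \Cref{thm:Seven}: start from \Cref{prop:OSS} (3) and rewrite both sums in terms of the moments $\mathscr{H}_{k,m,4,b}(p^r)$ with $b\in\{1,4\}$. Then substitute \Cref{thm:Hp4} and collect terms.

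First, since $p^r\equiv 1\pmod 4$, the condition $t\equiv p^r+1\pmod 8$ forces $t=2t_1$ with $t_1\equiv \frac{p^r+1}{2}\equiv m\pmod 4$. Here $m$ is odd, so $m\in\{1,3\}$ modulo $4$. Also $p\nmid t$ if and only if $p\nmid t_1$. We have $\frac{4p^r-t^2}{4}=p^r-t_1^2$ and $\frac{4p^r-t^2}{16}=\frac{p^r-t_1^2}{4}$, so
\[
	S_k^{\Leg}(p^r)=2+\delta_{2\mid r}\varepsilon(p^r)2^{k-1}p^{\frac{rk}{2}}(p-1)+2^{k+1}\mathscr{H}_{k,m,4,1}(p^r)+2^{k+2}\mathscr{H}_{k,m,4,4}(p^r).
\]

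Next, I insert \Cref{thm:Hp4} for $b=1$ and $b=4$. The $\alpha$-terms carry the factors $2^{k+1}2^{-2\mu}=2^{k-2\mu+1}$ and $2^{k+2}2^{-2\mu}=2^{k-2\mu+2}$. These are exactly the two $\alpha$-sums in the statement.

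It remains to check that the explicit terms collapse.

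\emph{Supersingular terms.} The $\delta_{2\mid r}$ terms contribute
\[
	-\tfrac{1}{12}p^{\frac{rk}{2}}(p-1)\leg{-1}{m}\varepsilon(p^r)\bigl(2^{k+1}+2^{k+2}\bigr)=-2^{k-1}p^{\frac{rk}{2}}(p-1)\leg{-1}{m}\varepsilon(p^r).
\]
This cancels the term $\delta_{2\mid r}\varepsilon(p^r)2^{k-1}p^{\frac{rk}{2}}(p-1)$ from \Cref{prop:OSS} provided $\leg{-1}{m}=1$, i.e.\ $m\equiv 1\pmod 4$. That holds: $r$ even gives $p^r\equiv 1\pmod 8$, hence $m\equiv 1\pmod 4$.

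\emph{$\lambda$-terms.} These contribute
\[
	-\leg{-1}{m}\leg{2}{p^r}2^{-k-1}\bigl(2^{k+1}+2^{k+2}\cdot\tfrac12\bigr)\sum_{\mu}T(k,\mu)p^{r\mu}=-2\leg{-1}{m}\leg{2}{p^r}\sum_{\mu=0}^{(k-1)/2}T(k,\mu)p^{r\mu}.
\]
The sign $\leg{-1}{m}\leg{2}{p^r}$ equals $1$ in both cases. If $p^r\equiv 1\pmod 8$, then $m\equiv 1\pmod 4$ and $\leg{2}{p^r}=1$. If $p^r\equiv 5\pmod 8$, then $m\equiv 3\pmod 4$ and $\leg{2}{p^r}=-1$. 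The $\mu=0$ term is $-2T(k,0)=-2$, which cancels the constant $2$. What remains is $-2\sum_{\mu\ge1}T(k,\mu)p^{r\mu}$, as claimed.

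The only delicate step is this sign bookkeeping: confirming that $\leg{-1}{m}$, $\leg{2}{p^r}$ and $\varepsilon(p^r)$ combine to give exact cancellation. The rest is direct substitution.
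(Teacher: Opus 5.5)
Your proposal is correct and follows the paper's proof: substitute $t=2t_1$ in \Cref{prop:OSS} (3) to express $S_k^{\Leg}(p^r)$ through $\mathscr{H}_{k,m,4,1}(p^r)$ and $\mathscr{H}_{k,m,4,4}(p^r)$, insert \Cref{thm:Hp4}, and use that $\leg{-1}{m}=1$ when $r$ is even and that $\leg{-1}{m}\leg{2}{p^r}=1$ always. Your explicit bookkeeping makes visible what the paper leaves implicit: the factor $3\cdot 2^{k+1}/12=2^{k-1}$ for the supersingular terms, the $b^{-1/2}$ weights combining to $-2\sum_{\mu\ge 0}T(k,\mu)p^{r\mu}$, and $T(k,0)=1$ cancelling the constant $2$.
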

\begin{proof}
Again taking the formula from \Cref{prop:OSS} and substituting $ t = 2t_1 $ in the two sums, we have
\[
	S_k^{\Leg}(p^r) = 2+\delta_{2 \mid r}\varepsilon(p^r)2^{k-1} p^{\frac{rk}{2}}(p-1)+2^{k+1}\mathscr{H}_{k,m,4,1}(p^r)+2^{k+2}\mathscr{H}_{k,m,4,4}(p^r).
\]
We plug in the expression for $ \mathscr{H}_{k,m,4,b}(p^r) $ with $ b \in \{1,4\} $ from \Cref{thm:Hp4}. If $ 2 \mid r $, then $ p^r \equiv 1 \pmod{8} $, hence $ m \equiv \frac{p^r+1}{2} \equiv 1 \pmod{4} $ and $ \leg{-1}{m} = 1 $, so the terms with $ \delta_{2 \mid r} $ cancel. Moreover, since $ m \equiv \frac{p^r+1}{2} \pmod{4} $, we always have $ \leg{-1}{m}\leg{2}{p^r} = 1 $. The theorem follows.
\end{proof}

Next, we prove \Cref{cor:Bias} about the averages of the terms in the asymptotic expansion of the moments.
\begin{corollary}
\label{cor:Bias'}
Let $ p \geq 5 $ be a prime, $ r \in \Z_{\geq 1} $, and $ k \in \Z_{\geq 1} $.
\begin{enumerate}
\item If $ k $ is even, then the largest $ j < \frac{k}{2}+1 $ such that $ \calA_{k,r,j}^{\Leg} $ does not vanish is $ j = \frac{k}{2} $. Moreover, if $ j \in \Z $, then
\[
	\calA_{k,r,j}^{\Leg} = \begin{cases}
		-2C_{\frac{k}{2}}&\text{if $ j = \frac{k}{2} $,}\\
		-3T(k,j)&\text{if $ 1 \leq j \leq \frac{k}{2}-1 $,}\\
		-1&\text{if $ j = 0 $,}
	\end{cases}
\]
and if $ j \in \Z+\frac{1}{2} $, then $ \calA_{k,r,j}^{\Leg} = 0 $.
\item If $ k $ is odd, then the largest $ j $ such that $ \calA_{k,r,j}^{\Leg} $ does not vanish is $ j = \frac{k-1}{2} $. If $ j \in \Z $, then
\[
	\calA_{k,r,j}^{\Leg} = \begin{cases}
	-2T(k,j)&\text{if $ 1 \leq j \leq \frac{k-1}{2} $, $ r $ even,}\\
	-T(k,j)&\text{if $ 1 \leq j \leq \frac{k-1}{2} $, $ r $ odd,}\\
	0&\text{if $ j = 0 $,}
	\end{cases}
\]
and if $ j \in \Z+\frac{1}{2} $, then $ \calA_{k,r,j}^{\Leg} = 0 $.
\end{enumerate}
\end{corollary}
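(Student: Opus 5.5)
The plan is to read the averages off directly from the exact formulas in \Cref{thm:Seven} and \Cref{thm:Sodd}. I would first sort every term by its size $p^{rj}$ and then show that only the explicit polynomial terms survive averaging. The explicit terms are $C_{\frac{k}{2}}p^{r(\frac{k}{2}+1)}$, $-2C_{\frac{k}{2}}p^{\frac{rk}{2}}$, $-3T(k,\mu)p^{r\mu}$ for $1\le\mu\le\frac{k}{2}-1$, and $-1$ for even $k$; for odd $k$ they are $-2T(k,\mu)p^{r\mu}$. Each is already a sum of $r$-th powers of algebraic numbers of absolute value $p^{j}$ with $j\in\Z$, namely $\alpha=p^{\mu}$ with multiplicity given by the coefficient. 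So each contributes to $S_{k,j}^{\Leg}(p^r)$ a constant equal to its coefficient, for every prime for which the formula applies.

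Second, I would treat the cusp-form terms. For $\ell:=k-2\mu\ge1$, \Cref{prop:holH} shows that $f_{\ell,m,M,b}$ is a cusp form of weight $\ell+2$ on $\Gamma_0(4M^2b)\cap\Gamma_1(M)$ with $M\in\{2,4\}$. For $p\ge5$ this level is prime to $p$. I would decompose $f_{\ell,m,M,b}$ into Hecke eigenforms (newforms and their $V_\delta$-images with $\delta$ a power of $2$). Then I would write $\alpha_{\ell,m,M,b}(p^r)-p^{\ell+1}\alpha_{\ell,m',M,b}(p^{r-2})$ as a linear combination of $\beta_p^r$, $\gamma_p^r$ and lower-order symmetric expressions. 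Using $c(p^r)=\sum_{i=0}^r\beta_p^i\gamma_p^{r-i}$ and $\beta_p\gamma_p=\chi(p)p^{\ell+1}$, the combination $c(p^r)-\chi(p)p^{\ell+1}c(p^{r-2})$ collapses to $\beta_p^r+\gamma_p^r$. I would need to check that the twisted residue class $m\bar p$ in \Cref{thm:Sodd} matches the nebentypus twist so that this collapse happens. After multiplying by $p^{r\mu}$, these terms have size $p^{rj}$ with $j=\frac{\ell+1}{2}+\mu=\frac{k+1}{2}$. For even $k$ this is a half-integer, so these terms affect only $\calA_{k,r,j}^{\Leg}$ with $j\in\Z+\frac12$. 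For odd $k$, $j=\frac{k+1}{2}$ is an integer, one larger than $\frac{k-1}{2}$.

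Third, I would show that the cusp-form terms have average zero. The plan is to invoke Sato--Tate for non-CM newforms \cite{CHT}: the normalized quantity $(\beta_p^r+\gamma_p^r)/p^{r(\ell+1)/2}=2\cos(r\theta_p)$ has mean $\int 2\cos(r\theta)\,\dif\mu_{\ST}=0$ for $r\ge1$. For CM forms I would use equidistribution of Hecke characters on split primes. This is the main obstacle. At inert primes of a CM form one has $\beta_p=-\gamma_p$, so for $r$ even $\beta_p^r+\gamma_p^r$ is a nonzero constant times $p^{r(\ell+1)/2}$. For odd $k$ that would give a spurious term at the integer level $j=\frac{k+1}{2}$. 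I would try first to show that such contributions cancel between $b=1$ and $b=4$ or among the characters $\chi$ modulo $4$ in the decomposition. Failing that, I would verify directly that the relevant cusp spaces have no CM part where it matters. This gives $\calA_{k,r,j}^{\Leg}=0$ for $j\in\Z+\frac12$, and no contribution above $\frac{k-1}{2}$ for odd $k$.

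Finally, I would average over primes. For even $k$ the formula holds for all $p\ge5$, so $\calA_{k,r,j}^{\Leg}$ equals the explicit coefficient: $-2C_{\frac{k}{2}}$ at $j=\frac{k}{2}$, $-3T(k,j)$ for $1\le j\le\frac{k}{2}-1$, and $-1$ at $j=0$. Since $C_{\frac{k}{2}}>0$, the largest nonvanishing $j$ below $\frac{k}{2}+1$ is $\frac{k}{2}$. For odd $k$, if $r$ is even then $p^r\equiv1\pmod 4$ for every $p$, giving $-2T(k,j)$. If $r$ is odd, the formula applies only to $p\equiv1\pmod4$, and $S_k^{\Leg}(p^r)=0$ otherwise by \Cref{prop:OSS}. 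By Dirichlet's theorem these primes have density $\frac12$, giving $-T(k,j)$. There is no constant term, so $\calA_{k,r,0}^{\Leg}=0$. Positivity of $T(k,\frac{k-1}{2})$ shows that the top nonvanishing index is $\frac{k-1}{2}$.
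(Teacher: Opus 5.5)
Your Steps 1, 2 and 4 are the paper's argument: the paper reads the polynomial terms off \Cref{thm:Seven,thm:Sodd} and discards $E_k(p^r)$ with a one-line appeal to Sato--Tate. Step 3, however, cannot be completed, and the paper's proof has the same hole. For even $r$, the vanishing you need is false.

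First, $\int 2\cos(r\theta)\,\dif\mu_{\ST}$ vanishes only for $r\neq 2$. For $r=2$ it equals $\int(4x^2-2)f_{\ST}(x)\,\dif x=-1$. Concretely, $g_5=g_4\otimes\chi_{-4}$, so \Cref{prop:S4} reads $E_4(p^r)=-\left(c_{g_4}(p^r)-p^5c_{g_4}(p^{r-2})\right)$, i.e.\ $E_4(p^2)=2p^5-c_{g_4}(p)^2$. Rankin--Selberg gives $\frac{1}{\pi(x)}\sum_{p\le x}c_{g_4}(p)^2p^{-5}\to 1$, hence $\calA_{4,2,5/2}^{\Leg}=+1$. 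This contradicts part (1), and since it is positive, \Cref{cor:Bias} fails there as well. For odd weight with nebentypus $\chi_{-4}$, the primes with $\chi_{-4}(p)=\pm1$ contribute $\mp1$ and cancel. So this particular effect does not affect odd $k$.

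Second, the CM obstruction you flagged really occurs and cannot be cancelled away. In \Cref{prop:S3} the cancellation between $b=1$ and $b=4$ does happen, but it removes $g_1$ and $\Tr(g_3)$. What remains is exactly $g_2=q\cdot(\text{power series in }q^4)$, which has CM by $\Q(i)$. For $p\equiv 3\pmod 4$ and even $r$, $c_{g_2}(p)=0$ and $\chi_{-4}(p)=-1$ force $c_{g_2}(p^r)=p^{2r}$. Hence $E_3(p^r)=-2p^{2r}$ and $S_3^{\Leg}(p^r)=-4p^r-2p^{2r}$. As a sanity check at $p=3$ (outside the hypothesis $p\ge5$), a hand count over $\F_9$ gives traces $1,1,-6,2,2,2,2,-2,-2$, so $S_3^{\Leg}(9)=-198=-36-162$. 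Split primes average to $0$, so $\calA_{3,r,2}^{\Leg}=-1$ for every even $r$. For $k=3$ the top nonvanishing index is therefore $2$, not $1$.

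What your argument, and the paper's, does prove is the corollary for odd $r$. In that case:
\begin{enumerate}
\item $\int 2\cos(r\theta)\,\dif\mu_{\ST}=0$;
\item inert primes of a CM form give $\beta_p^r+\gamma_p^r=0$;
\item split primes average to $0$ by Hecke equidistribution.
\end{enumerate}
Both Sato--Tate and Hecke equidistribution hold jointly with the congruence conditions on $p$ coming from $p\equiv1\pmod4$ and the twist $\leg{2}{p^r}$. For even $r$ it is the statement itself, not just the proof, that needs amending.
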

\begin{proof}
If $ \ell \in \Z_{\geq 0} $, $ m \in \Z $, $ M \in \Z_{\geq 1} $, and $ b \in \Z_{\geq 1} $, then by definition, $ \alpha_{\ell,m,M,b}(n) = \binom{\ell}{\lfloor \ell/2 \rfloor}^{-1}\gamma_{\ell,m,M,b}(n) $ and $ \gamma_{\ell,m,M,b}(n) $ is a Fourier coefficient of $ f_{\ell,m,M,b} $, which is a cusp form of weight $ \ell+2 $ if $ \ell \geq 1 $. The coefficient $ \alpha_{\ell,m,M,b}(p^r) $ is of size $ p^{r\frac{\ell+1}{2}} $ (here ``size'' does not mean absolute value; instead, it is used in the sense defined in the Introduction).

First, let $ k $ be even. By \Cref{thm:Seven}, we have
\[
	E_k(p^r) = 3\sum_{\mu=0}^{\frac{k}{2}-1}2^{k-2\mu}T(k,\mu)\left(\alpha_{k-2\mu,m,2,1}(p^r)-p^{k-2\mu+1}\alpha_{k-2\mu,m,2,1}(p^{r-2})\right)p^{r\mu}.
\]
The terms in $ E_k(p^r) $ are of size $ p^{r\frac{k+1}{2}} $ because $ \alpha_{k-2\mu,m,2,1}(p^r)p^{r\mu} $ is of size $ p^{r\frac{k-2\mu+1}{2}+r\mu} = p^{r\frac{k+1}{2}} $ and $ p^{k-2\mu+1}\alpha_{k-2\mu,m,2,1}(p^{r-2})p^{r\mu} $ is also of size $ p^{k-2\mu+1+(r-2)\frac{k-2\mu+1}{2}+r\mu} = p^{r\frac{k+1}{2}} $. Hence,
\[
	S_{k,\frac{k+1}{2}}^{\Leg}(p^r) = \frac{E_k(p^r)}{p^{r\frac{k+1}{2}}}.
\]
Moreover, by the Sato-Tate conjecture for Fourier coefficients of cusp forms \cite{CHT}, the normalized coefficients are zero on average:
\[
	\calA_{k,r,\frac{k+1}{2}}^{\Leg} = \lim_{x\to\infty}\frac{1}{\pi(x)}\sum_{p \leq x}S_{k,\frac{k+1}{2}}^{\Leg}(p^r) = 0.
\]
If $ j \in \Z+\frac{1}{2} $ is different from $ \frac{k+1}{2} $, then there are no terms of size $ p^{rj} $, hence $ S_{k,j}^{\Leg}(p^r) = 0 $ and $ \calA_{k,r,j}^{\Leg} = 0 $. If $ j \in \Z $, then we get $ \calA_{k,r,j}^{\Leg} $ directly from \Cref{thm:Seven}.

Secondly, let $ k $ be odd. For $ p^r \equiv 1 \pmod{4} $, we have from \Cref{thm:Sodd}
\begin{align*}
	E_k(p^r)&=\sum_{\mu=0}^{(k-1)/2}2^{k-2\mu+1}T(k,\mu)\left(\alpha_{k-2\mu,m,4,1}(p^r)-p^{k-2\mu+1}\alpha_{k-2\mu,m\bar{p},4,1}(p^{r-2})\right)p^{r\mu}\\
	&+\sum_{\mu=0}^{(k-1)/2}2^{k-2\mu+2}T(k,\mu)\left(\alpha_{k-2\mu,m,4,4}(p^r)-p^{k-2\mu+1}\alpha_{k-2\mu,m\bar{p},4,4}(p^{r-2})\right)p^{r\mu}.
\end{align*}
As before, $ E_k(p^r) $ contributes only terms of size $ p^{r\frac{k+1}{2}} $ and $ \calA_{k,r,\frac{k+1}{2}}^{\Leg} = 0 $. If $ j \in \Z $, $ 1 \leq j \leq \frac{k-1}{2} $, then
\[
	S_{k,j}^{\Leg}(p^r) = -2T(k,\mu)\delta_{p^r \equiv 1 \pmod{4}},
\]
hence
\[
	\calA_{k,r,j}^{\Leg} = \lim_{x\to\infty}\frac{1}{\pi(x)}\sum_{p\leq x}S_{k,j}^{\Leg}(p^r) = \begin{cases}
		-2T(k,\mu)&\text{$ r $ even,}\\
		-T(k,\mu)&\text{$ r $ odd,}
	\end{cases}
\]
where we used that primes congruent to $ 1 \pmod{4} $ have density $ \frac{1}{2} $. If $ j = 0 $ and $ j \in \Z+\frac{1}{2} $, then $ S_{k,j}^{\Leg}(p^r) = 0 $, hence $ \calA_{k,r,j}^{\Leg} = 0 $.
\end{proof}

\subsection{Examples}

We explicitly compute the third and fourth moment in the Legendre family. For that purpose, we work with some specific newforms identified by their LMFDB labels~\cite{LMFDB}.

Let $ g_1 \in S_3(\Gamma_0(64),\chi_{-4}) $ be the newform with LMFDB label 64.3.c.a:
\begin{equation}
	\label{eq:g1}
	g_1(\tau) = \sum_{n=1}^{\infty}c_{g_1}(n)q^n = \frac{\eta(8z)^{18}}{\eta(4z)^6\eta(16z)^6} = q+6q^5+9q^9-10q^{13}+O(q^{17}).
\end{equation}
Let $ g_2, g_3 \in S_5(\Gamma_0(64),\chi_{-4}) $ be the cusp forms with LMFDB labels 64.5.c.a and 64.5.c.c, respectively, so that
\begin{align}
\label{eq:g2}
	g_2(\tau)& = \sum_{n=1}^{\infty}c_{g_2}(n)q^n = \frac{\eta(8z)^{38}}{\eta(4z)^{14}\eta(16z)^{14}} = q+14q^5+81q^9+238q^{13}+O(q^{17}),\\
	g_3(\tau)& = \sum_{n=1}^{\infty}c_{g_3}(n)q^n = q-\beta q^3-18q^5-2\beta q^7-111 q^9+O(q^{11}),\qquad \beta = 8\sqrt{-3}.
\end{align}
We also let $ \Tr(g_3) $ be the trace form of $ g_3 $ given by
\[
	\Tr(g_3)(\tau) = \sum_{n=1}^{\infty}c_{\Tr(g_3)}(n)q^n = 2q-36q^5-222q^9-356q^{13}+O(q^{17}).
\]
Let $ g_4 \in S_6(\Gamma_0(4)) $ be the newform with LMFDB label 4.6.a.a and $ g_5 \in S_6(\Gamma_0(16)) $ the newform with LMFDB label 16.6.a.b:
\begin{align}
\label{eq:g4}
	g_4(\tau)& = \sum_{n=1}^{\infty} c_{g_4}(n)q^n = \eta(2\tau)^{12} = q-12q^3+54q^5-88q^7-99q^9+O\left(q^{10}\right),\\
\label{eq:g5}
	g_5(\tau)& = \sum_{n=1}^{\infty} c_{g_5}(n)q^n = \frac{\eta(4\tau)^{36}}{\eta(2\tau)^{12}\eta(8\tau)^{12}} = q+12q^3+54q^5+88q^7-99q^9+O\left(q^{10}\right).
\end{align}

\begin{proposition}
\label{prop:S3}
If $ p \geq 5 $ be prime and $ r \in \Z_{\geq 1} $, then
\[
	S_3^{\Leg}(p^r) = \begin{cases}
		-4p^r+E_3(p^r)&\text{if $ p^r \equiv 1 \pmod{4} $,}\\
		0&\text{if $ p^r \equiv 3 \pmod{4} $,}
	\end{cases}
\]
where
\[
	E_3(p^r) = -\leg{2}{p^r}\left(c_{g_2}(p^r)-\leg{-1}{p}p^4c_{g_2}(p^{r-2})\right).
\]
\end{proposition}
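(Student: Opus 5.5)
The plan is to specialize \Cref{thm:Sodd} to $k=3$. The case $p^r \equiv 3 \pmod{4}$ is already \Cref{prop:OSS}(2), so assume $p^r \equiv 1 \pmod{4}$ and set $m \equiv \frac{p^r+1}{2} \pmod 4$. With $T(3,0)=1$ and $T(3,1)=2$, \Cref{thm:Sodd} gives the main term $-2T(3,1)p^r=-4p^r$. The remaining task is to identify the error term
\[
E_3(p^r)=\sum_{\mu=0}^{1}2^{4-2\mu}T(3,\mu)\sum_{b\in\{1,4\}}2^{b/4\cdot 1}\cdots
\]
in closed form. Concretely, $E_3(p^r)$ is the weighted combination of $\alpha_{3,m,4,b}(p^r)-p^4\alpha_{3,m\bar p,4,b}(p^{r-2})$ and $p^r\bigl(\alpha_{1,m,4,b}(p^r)-p^2\alpha_{1,m\bar p,4,b}(p^{r-2})\bigr)$ for $b\in\{1,4\}$.

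\textbf{Identifying the cusp forms.} By \Cref{prop:holH} and \Cref{def:gamma}, $f_{1,m,4,b}$ is a cusp form of weight $3$ and $f_{3,m,4,b}$ is a cusp form of weight $5$. Both live on $\Gamma_0(64b)\cap\Gamma_1(4)$, with $b\in\{1,4\}$, so the levels are $64$ and $256$.

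Only exponents $n\equiv 1 \pmod 4$ matter for us, namely $n=p^r$ and $n=p^{r-2}$. So I would apply the sieve $S_{4,1}$ (\Cref{lem:S}; since $4\mid 24$ the level stays controlled). I would also symmetrize over $m\in\{1,3\}$, using that $\alpha_{\ell,m,4,b}$ picks up the sign $\leg{-1}{m}$ under $m\mapsto -m$ for odd $\ell$.

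Then I would compute enough Fourier coefficients of $[\calH\vert V_b,\theta_{1,m,4}]_j$ directly. These come from \Cref{lem:rec}, using class numbers and the explicit $\lambda$ via \Cref{lem:alt}. Applying the Sturm bound, I would then identify the relevant sieved combination.

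For the weight-$3$ part, the expectation is that the combination of the $b=1$ and $b=4$ contributions vanishes; equivalently, the $g_1$-components cancel. For the weight-$5$ part, I expect the combination to equal a multiple of $\chi_8\otimes g_2$, twisted so as to produce the factor $\leg{2}{n}$. The point is that $\leg{-1}{m}\leg{2}{p^r}=1$ converts the $m$-dependence into $\leg{2}{p^r}$. The trace form $\Tr(g_3)$ should not survive in this combination.

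\textbf{Assembling the formula.} Once the identification holds, Hecke multiplicativity for $g_2$ handles the $r-2$ terms. The character of $g_2$ is $\chi_{-4}$, and the twist by $\leg{2}{\cdot}$ leaves $\leg{2}{p^r}=\leg{2}{p^{r-2}}\leg{2}{p}^2$ unchanged. Together these give exactly $-\leg{2}{p^r}\bigl(c_{g_2}(p^r)-\leg{-1}{p}p^4c_{g_2}(p^{r-2})\bigr)$. Here the factor $\leg{-1}{p}$ should come from the mismatch between the index $m\bar p$ and $m$ once everything is expressed through $\leg{2}{\cdot}$ and $\leg{-1}{\cdot}$.

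\textbf{Main obstacle.} The hardest step is the modular identification. This means choosing the right linear combination over $b$, $m$, and the two weights, verifying the cancellation of the weight-$3$ pieces, and checking enough coefficients against the Sturm bound at level $256$ with character. I would first try to reduce to level $64$ by noting that the $b=4$ terms only see $n\equiv 1 \pmod 4$ with $(n-t^2)/4$ integral; this might allow a $U_4$/$V_4$ rewrite that brings everything onto $\Gamma_0(64)$, where $g_1$, $g_2$ and $g_3$ span the relevant newspace.
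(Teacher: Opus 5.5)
Your plan is correct and is essentially the paper's proof: specialize \Cref{thm:Sodd} to $k=3$, then use the Sturm bound to identify the sieved forms. These are $f_{1,m,4,b}\vert S_{2,1}$ (multiples of $g_1$) and $f_{3,m,4,b}\vert S_{2,1}$ (combinations of $g_2$ and $\Tr(g_3)$), chosen so that the $g_1$- and $\Tr(g_3)$-parts cancel when the $b=1$ term is combined with twice the $b=4$ term; finally convert the $m$-dependence via $(-1)^{\frac{m+1}{2}}=-\leg{2}{p^r}$ and $(-1)^{\frac{m\bar{p}+1}{2}}=-\leg{2}{p^r}\leg{-1}{p}$. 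The differences are cosmetic: the paper keeps $m$ fixed, so the weight-$5$ form is a plain multiple of $g_2$ (your $\chi_8$-twist appears only if $m$ varies with $n$), and no Hecke multiplicativity is needed since the result is stated directly in terms of $c_{g_2}(p^r)$ and $c_{g_2}(p^{r-2})$.
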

\begin{proof}
If $ p^r \equiv 3 \pmod{4} $, then the conclusion follows directly from \Cref{prop:OSS}.

Assume that $ p^r \equiv 1 \pmod{4} $ and $ m \equiv \frac{p^r+1}{2} \pmod{4} $. By \Cref{thm:Sodd},
\begin{align*}
	S_3^{\Leg}(p^r)& = -2T(3,1)p^r\\
	&+16T(3,0)\left(\alpha_{3,m,4,1}(p^r)+2\alpha_{3,m,4,4}(p^r)-p^4\alpha_{3,m\bar{p},4,1}(p^{r-2})-2p^4\alpha_{3,m\bar{p},4,4}(p^{r-2}\right)\\
	&+4T(3,1)\left(\alpha_{1,m,4,1}(p^r)+2\alpha_{1,m,4,4}(p^r)-p^2\alpha_{1,m\bar{p},4,1}(p^{r-2})-2p^2\alpha_{1,m\bar{p},4,4}(p^{r-2})\right)p^r.
\end{align*}
The function $ f_{1,m,4,b} $ introduced in \Cref{def:gamma} is a holomorphic cusp form of weight $ 3 $ on $ \Gamma_{64b,4} = \Gamma_0(64b)\cap\Gamma_1(4) $. We claim that
\begin{align*}
	f_{1,m,4,1}\vert S_{2,1}(\tau)& = \frac{(-1)^{\frac{m-1}{2}}}{24}g_1(\tau),\\
	f_{1,m,4,4}\vert S_{2,1}(\tau)& = -\frac{(-1)^{\frac{m-1}{2}}}{48}g_1(\tau).
\end{align*}
To prove the claim, we note that by \Cref{lem:S}, $ f_{1,m,4,b}|S_{2,1} $ is also a holomorphic cusp form of weight $ 3 $ on the same group. The index is $ [\Sl_2(\Z):\Gamma_{64b,4}] = 192b $ and the Sturm bound shows that that we only have to check the coefficients of $ q^n $ for $ 0 \leq n \leq 48b $.

From the claim, we get
\begin{align*}
	\alpha_{1,m,4,1}(p^r)& = \gamma_{1,m,4,1}(p^r) = \frac{(-1)^{\frac{m-1}{2}}}{24}c_{g_1}(p^r),\\
	\alpha_{1,m,4,4}(p^r)& = \gamma_{1,m,4,4}(p^r) = -\frac{(-1)^{\frac{m-1}{2}}}{48}c_{g_1}(p^r),
\end{align*}
hence
\[
	\alpha_{1,m,4,1}(p^r)+2\alpha_{1,m,4,4}(p^r)-p^2\alpha_{1,m\bar{p},4,1}(p^{r-2})-2p^2\alpha_{1,m\bar{p},4,4}(p^{r-2}) = 0.
\]
Secondly, we claim that
\begin{align*}
	f_{3,m,4,1}\vert S_{2,1}(\tau)& = (-1)^{\frac{m+1}{2}}\left(\frac{3}{32}g_2(\tau)-\frac{1}{32}\Tr(g_3)(\tau)\right),\\
	f_{3,m,4,4}\vert S_{2,1}(\tau)& = (-1)^{\frac{m+1}{2}}\left(\frac{3}{64}g_2(\tau)+\frac{1}{64}\Tr(g_3)(\tau)\right).
\end{align*}
Since $ f_{3,m,4,b} $ is a holomorphic cusp form of weight $ 5 $ on $ \Gamma_{64b,4} $, this follows from the Sturm bound as above.

We have $ \alpha_{3,m,4,b}(n) = \frac{1}{3}\gamma_{3,m,4,b}(n) $ for $ n \in \Z_{\geq 1} $, hence
\begin{align*}
	\alpha_{3,m,4,1}(p^r)+2\alpha_{3,m,4,4}(p^r)& = (-1)^{\frac{m+1}{2}}\frac{1}{16}c_{g_2}(p^r),\\
	\alpha_{3,m\bar{p},4,1}(p^{r-2})+2\alpha_{3,m\bar{p},4,4}(p^{r-2})& = (-1)^{\frac{m\bar{p}+1}{2}}\frac{1}{16}c_{g_2}(p^{r-2}).
\end{align*}
The proposition now follows from $ (-1)^{\frac{m+1}{2}} = -\leg{2}{p^r} $ and $ (-1)^{\frac{m\bar{p}+1}{2}} = -\leg{2}{p^r}\leg{-1}{p} $.
\end{proof}

\begin{proposition}
\label{prop:S4}
If $ p \geq 5 $ is prime and $ r \in \Z_{\geq 1} $, then
\[
	S_4^{\Leg}(p^r) = 2p^{3r}-4p^{2r}-9p^r-1+E_4(p^r),
\]
where
\[
	E_4(p^r) = -\frac{1}{2}\left(c_{g_4}(p^r)-p^5c_{g_4}(p^{r-2})\right)-\frac{1}{2}\leg{-1}{p^r}\left(c_{g_5}(p^r)-p^5c_{g_5}(p^{r-2})\right).
\]
\end{proposition}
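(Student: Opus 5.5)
Specialize \Cref{thm:Seven} to $k=4$, mirroring the proof of \Cref{prop:S3}. With $C_2=2$, $T(4,0)=1$, $T(4,1)=3$, the main part of \Cref{thm:Seven} becomes
\[
2\bigl(p^{3r}-2p^{2r}\bigr)-9p^r-1 = 2p^{3r}-4p^{2r}-9p^r-1,
\]
which is exactly the main term in the statement. It remains to identify
\[
E_4(p^r)=3\cdot 16\bigl(\alpha_{4,m,2,1}(p^r)-p^5\alpha_{4,m,2,1}(p^{r-2})\bigr)+3\cdot 4\cdot 3\bigl(\alpha_{2,m,2,1}(p^r)-p^3\alpha_{2,m,2,1}(p^{r-2})\bigr)p^r,
\]
where $m\equiv \frac{p^r+1}{2}\pmod 2$. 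Thus $m=1$ if $p^r\equiv 1\pmod 4$ and $m=0$ if $p^r\equiv 3\pmod 4$.

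By \Cref{prop:holH} and \Cref{def:gamma}, $f_{\ell,m,2,1}$ is a cusp form of weight $\ell+2$ on $\Gamma_0(16)$, since $\Gamma_{16,2}=\Gamma_0(16)$. Only odd-indexed coefficients enter, so I would apply the sieve $S_{2,1}$; by \Cref{lem:S} (with $M=2\mid 24$) this stays on $\Gamma_0(16)$ up to a small level increase.

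\emph{Weight 4.} I expect $S_4(\Gamma_0(16))$, sieved to odd exponents, to vanish, or at least to contribute nothing to $f_{2,m,2,1}\vert S_{2,1}$. The candidates are the oldforms coming from the weight-4 newform on $\Gamma_0(8)$, namely $\eta(2\tau)^4\eta(4\tau)^4$, and its images under $V_2$. I would verify via the Sturm bound that $f_{2,m,2,1}\vert S_{2,1}=0$ for both values of $m$. If instead it equals a multiple of $\eta(2\tau)^4\eta(4\tau)^4$, that would produce a term of size $p^{3r/2}\cdot p^r$ not present in the statement. The stated formula suggests it vanishes, and the check is a finite coefficient comparison.

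\emph{Weight 6.} I claim that
\[
f_{4,m,2,1}\vert S_{2,1} = c_m\, g_4+d_m\, g_5
\]
for explicit rational constants $c_m,d_m$. Matching against the formula suggests
\[
c_m=-\tfrac{1}{96}\cdot 6=-\tfrac{1}{16}\binom{4}{2}\cdot\tfrac{1}{6}, \qquad d_m=\pm c_m,
\]
with the sign of $d_m$ tied to $m$. The precise requirement is that
\[
48\,\alpha_{4,m,2,1}(n)=\tfrac{48}{6}\gamma_{4,m,2,1}(n)=-\tfrac12\bigl(c_{g_4}(n)+(-1)^{m+1}c_{g_5}(n)\bigr)
\]
for odd $n$. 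This holds because $g_4$ and $g_5$ are twists of one another by $\chi_{-4}$ on odd coefficients; compare the expansions \eqref{eq:g4} and \eqref{eq:g5}. Sieving onto $n\equiv 1$ versus $n\equiv 3\pmod 4$ is what produces the $\chi_{-4}$-twist depending on $m$. The identity would be verified via the Sturm bound on $\Gamma_0(16)$ (index $24$, weight $6$, so coefficients up to $q^{12}$), computed separately for $m=0$ and $m=1$.

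\emph{Assembling.} Since $m$ is the same for $p^r$ and $p^{r-2}$ (the residue of $p^{r}$ modulo $4$ equals that of $p^{r-2}$), the factor $(-1)^{m+1}$ equals $\leg{-1}{p^r}$ and is common to both terms $\alpha_{4,m,2,1}(p^r)$ and $\alpha_{4,m,2,1}(p^{r-2})$. This gives exactly the stated $E_4(p^r)$.

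\emph{Main obstacle.} The difficulty lies in the computation of enough coefficients of $f_{4,m,2,1}$. This requires $H_{4,m,2,1}(n)$, $H_{2,m,2,1}(n)$ and $H_{0,m,2,1}(n)$ (through \Cref{lem:rec}), together with $\lambda_{4,m,2,1}(n)$ (through \Cref{lem:alt}), for $n\le 12$ or a somewhat larger bound. The key risk is getting the normalizing constants and signs right.
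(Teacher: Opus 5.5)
Your proposal is correct and is essentially the paper's proof. You specialize \Cref{thm:Seven} to $k=4$, check via the Sturm bound on $\Gamma_0(16)$ that $f_{2,m,2,1}\vert S_{2,1}=0$ and $f_{4,m,2,1}\vert S_{2,1}=-\frac{1}{16}\bigl(g_4+(-1)^{m-1}g_5\bigr)$, and substitute using $(-1)^{m-1}=\leg{-1}{p^r}$. One harmless correction to your heuristics: $S_4(\Gamma_0(16))\vert S_{2,1}$ does not vanish, since it is spanned by $\eta(2\tau)^4\eta(4\tau)^4$ and its $\chi_{-4}$-twist (the level-$16$ newform you omitted), but the vanishing of $f_{2,m,2,1}\vert S_{2,1}$ is settled by the finite coefficient check you already propose.
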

\begin{proof}
Let $ m \equiv \frac{p^r+1}{2} \pmod{2} $. By \Cref{thm:Seven},
\begin{multline*}
	S_4^{\Leg}(p^r) = C_2\left(p^{3r}-2p^{2r}\right)-3T(4,1)p^r-1+48T(4,0)\left(\alpha_{4,m,2,1}(p^r)-p^5\alpha_{4,m,2,1}(p^{r-2})\right)\\
	+12T(4,1)\left(\alpha_{2,m,2,1}(p^r)-p^3\alpha_{2,m,2,1}(p^{r-2})\right)p^r.
\end{multline*}

We claim that
\begin{align*}
	f_{2,m,4,1}\vert S_{2,1}(\tau)& = 0,\\
	f_{4,m,2,1}\vert S_{2,1}(\tau)& = -\frac{1}{16}\left(g_4(\tau)+(-1)^{m-1}g_5(\tau)\right).
\end{align*}
Since $ f_{2,m,4,1} $, respectively $ f_{4,m,2,1} $, is a holomorphic cusp form of weight $ 4 $, respectively $ 6 $, on $ \Gamma_0(16)\cap \Gamma_1(2) = \Gamma_0(16) $, the claim follows from the Sturm bound and \Cref{lem:S} as in the proof of \Cref{prop:S3}.

Thus,
\begin{align*}
	\alpha_{2,m,2,1}(n)& = \frac{1}{2}\gamma_{2,m,2,1} = 0,\\
	\alpha_{4,m,2,1}(n)& = \frac{1}{6}\gamma_{4,m,2,1}(n) = -\frac{1}{96}\left(c_{g_4}(n)+(-1)^{m-1} c_{g_5}(n)\right)
\end{align*}
for $ n $ odd. Using $ (-1)^{m-1} = \leg{-1}{p^r} $ and substituting these expressions in the formula for $ S_4^{\Leg}(p^r) $ completes the proof.
\end{proof}

\end{document}